\documentclass[hidelinks, 11pt]{article}
\usepackage[a4paper,bindingoffset=0.in,left=2.5cm,right=2.5cm,top=2cm,bottom=3cm]{geometry}
\usepackage{amsmath, amssymb,amsthm}
\usepackage{graphicx,subfigure}
\usepackage{url}
\usepackage{enumitem}
\usepackage[utf8]{inputenc}
\usepackage{lmodern}
\usepackage{setspace}
\usepackage[normalem]{ulem}

\usepackage{wrapfig}
\usepackage{mdframed}
\usepackage[a]{esvect}
\usepackage{color}

\usepackage{epstopdf} 

\newtheorem{theorem}{Theorem}[section]
\newtheorem{proposition}[theorem]{Proposition}

\newtheorem{lemma}[theorem]{Lemma}

\newtheorem{definition}[theorem]{Definition}

\newtheorem{corollary}[theorem]{Corollary}
\newtheorem{conjecture}[theorem]{Conjecture}

\theoremstyle{definition} 
\newtheorem{definition}[theorem]{Definition}
\newtheorem{remark}[theorem]{Remark}
\newtheorem{example}[theorem]{Example}
\newtheorem{question}[theorem]{Question}

\usepackage{mathtools, amsmath, amssymb, graphicx, amsfonts}
\usepackage{xcolor} 
\usepackage[colorlinks=true,allcolors=blue!75!black]{hyperref} 
\usepackage{bbm} 
\usepackage{float} 
\usepackage{amsopn} 
\usepackage{mathrsfs} 
\usepackage{comment} 
\usepackage{hyperref} 
\usepackage{amsthm} 
\renewenvironment{proof}{{\noindent\bfseries Proof.}}{\qed} 


\newenvironment{customthm}[1]
  {\innercustomthm}
  {\endinnercustomthm}

\newenvironment{customprop}[1]
  {\innercustomprop}
  {\endinnercustomprop}

\renewcommand{\ker}{\operatorname{ker}}

\newcommand{\spn}{\operatorname{span}}

\newcommand{\conv}{\operatorname{conv}}

\newcommand{\R}{\mathbb{R}}

\newcommand{\N}{\mathbb{N}}
\newcommand{\Z}{\mathbb{Z}}
\newcommand{\GLnsym}{\operatorname{GL}_n^{\text{sym}}}
\newcommand{\kron}{\operatorname{kron}}
\newcommand{\inv}{\operatorname{cinv}}


\usepackage{tikz}
\newsavebox{\mydiamondbox}
\sbox{\mydiamondbox}{%
  \tikz[scale=0.08, baseline=-0ex, line width=0.5pt]{
    \draw (0,1) -- (1,2) -- (2,1) -- (1,0) -- cycle; 
    \draw (0,1) -- (2,1); 
  }%
}

\newcommand{\hdiamond}{\mathrel{\usebox{\mydiamondbox}}}

\begin{document}

\title{Graphs with nonnegative resistance curvature}

\author{Karel Devriendt\\\small{University of Oxford, UK}}

\date{}

\maketitle
\begin{abstract}
This article introduces and studies a new class of graphs motivated by discrete curvature. We call a graph \emph{resistance nonnegative} if there exists a distribution on its spanning trees such that every vertex has expected degree at most two in a random spanning tree; these are precisely the graphs that admit a metric with nonnegative resistance curvature, a discrete curvature introduced by Devriendt and Lambiotte. We show that this class of graphs lies between Hamiltonian and $1$-tough graphs and, surprisingly, that a graph is resistance nonnegative if and only if its twice-dilated matching polytope intersects the interior of its spanning tree polytope. We study further characterizations and basic properties of resistance nonnegative graphs and pose several questions for future research.
\end{abstract}


\section{Introduction}
\subsection{Motivation}
In \cite{devriendt_2022_curvature}, the author and Lambiotte introduced a new notion of discrete scalar curvature on the vertices of a graph, based on the geometry of effective resistances. For a simple graph $G=(V,E)$ with edge weights $c>0$, the \emph{effective resistance} $\omega$ between two adjacent vertices is proportional to the $c$-weighted fraction of all spanning trees that contain the edge between them: 
\begin{equation}\label{eq: definition effective resistance}
\omega_{uv} = c_{uv}^{-1}\frac{\sum_{T\ni uv}\prod_{t\in T}c_t}{\sum_{T}\prod_{t\in T}c_t}  \quad\quad\text{~for $uv\in E$,}
\end{equation}
where $T$ runs over all spanning trees in $G$. The effective resistance originates from electrical circuit analysis and potential theory and it has a rich combinatorial, algebraic and geometric theory; see for instance \cite{biggs_1993_algebraic, devriendt_2022_thesis, fiedler_2011_matrices}. The \emph{resistance curvature} $p$ is a discrete scalar curvature on the vertices of a graph, defined in \cite{devriendt_2022_curvature} as:
$$
p_v(c) = 1 - \frac{1}{2}\sum_{uv\in E}\omega_{uv}c_{uv}  \quad\quad\text{~for $v\in V$}.
$$
As in differential geometry, graphs with nonnegative resistance curvature have a lot of nice properties \cite{devriendt_2022_thesis, devriendt_2022_curvature, devriendt_2024_graph} and we note in particular a strong structural theorem for positively curved graphs; a graph $G$ is called \emph{$t$-tough} if for every set of vertices $U\subset V$ whose removal disconnects the graph, the graph $G-U$ has at most $\vert U\vert/t$ components. The following is implied by a result of Fiedler in simplex geometry \cite[Thm. 3.4.18]{fiedler_2011_matrices} and another proof is given in \cite[Thm. 6.31]{devriendt_2022_thesis}:
\begin{theorem}[\cite{fiedler_2011_matrices}]\label{thm: toughness}
Connected weighted graphs with positive resistance curvature are $1$-tough.
\end{theorem}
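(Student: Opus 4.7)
The plan is to reinterpret the resistance curvature probabilistically and then derive the toughness bound from a per-spanning-tree edge-count inequality.

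First, I would invoke the matrix-tree / Kirchhoff interpretation of effective resistance: equation~\eqref{eq: definition effective resistance} says exactly that $\omega_{uv}c_{uv}$ is the probability $\Pr_{T\sim\mu}[uv\in T]$ that the edge $uv$ lies in the random spanning tree $T$ drawn from the weighted measure $\mu(T)\propto\prod_{t\in T}c_t$. Summing over the edges at $v$ gives $\sum_{uv\in E}\omega_{uv}c_{uv}=\mathbb{E}_\mu[\deg_T(v)]$, so the curvature can be rewritten as
\[
p_v(c) \;=\; 1 - \tfrac{1}{2}\,\mathbb{E}_\mu[\deg_T(v)].
\]
Hence positive resistance curvature means $\mathbb{E}_\mu[\deg_T(v)]<2$ for every vertex $v$.

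Next, I would assume for contradiction that $G$ is not $1$-tough, i.e.\ there exists $U\subset V$ such that $G-U$ has $k>|U|$ components $C_1,\dots,C_k$. For any spanning tree $T$, deleting the vertices of $U$ from $T$ yields a forest on $V\setminus U$ with at least $k$ components (since each $C_i$ cannot reach another $C_j$ without passing through $U$). Hence $T$ contains at most $(|V|-|U|)-k$ edges with both endpoints in $V\setminus U$, so at least
\[
(|V|-1)-\bigl((|V|-|U|)-k\bigr)\;=\;|U|+k-1
\]
edges of $T$ are incident to $U$. Since the sum of degrees in $U$ counts such edges at least once each, this gives the deterministic bound $\sum_{v\in U}\deg_T(v)\ge |U|+k-1$ for every spanning tree $T$.

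Finally, I would take expectation under $\mu$ on both sides. The left-hand side is $\sum_{v\in U}\mathbb{E}_\mu[\deg_T(v)]<2|U|$ by positive curvature, while the right-hand side is the constant $|U|+k-1$. Thus $|U|+k-1<2|U|$, i.e.\ $k\le |U|$, contradicting $k>|U|$. This would establish $1$-toughness. I expect the main (modest) obstacle to be writing out the spanning-tree-probability identity $\omega_{uv}c_{uv}=\Pr[uv\in T]$ carefully in the weighted setting and verifying the edge-count inequality cleanly; the rest is a one-line expectation argument and a contradiction. No appeal to Fiedler's simplex-geometric proof is needed in this route.
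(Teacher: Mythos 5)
Your proof is correct. To be clear about the comparison: the paper does not include its own proof of Theorem~\ref{thm: toughness}; it defers to Fiedler's simplex-geometric result \cite[Thm.\ 3.4.18]{fiedler_2011_matrices} and to a second proof in \cite[Thm.\ 6.31]{devriendt_2022_thesis}. Your argument is therefore a self-contained alternative, and it is a genuinely different route from Fiedler's, which goes through the geometry of hyperacute simplices rather than through spanning-tree probabilities.

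What you do is, in effect, fold two of the paper's later observations into a short direct argument. The identity $p_v(c)=1-\tfrac12\mathbb{E}_\mu[\deg_T(v)]$ is exactly the reformulation that underlies Theorem~\ref{th: RN and expected degree} (here specialized to the log-linear measure $\mu_c$), so the first half of your proof re-derives the ``positive curvature $\Leftrightarrow$ expected degree $<2$'' dictionary that the paper sets up in Sections~\ref{sec: relative resistances and spanning tree polytope}--\ref{sec: resistance nonnegative graphs}. The second half is a clean deterministic count: for any spanning tree $T$, removing $U$ leaves a forest $T-U\subseteq G-U$ with at least $k$ components (one per component of $G-U$), so $T$ has at most $(|V|-|U|)-k$ edges inside $V\setminus U$ and hence at least $|U|+k-1$ edges meeting $U$; this gives $\sum_{v\in U}\deg_T(v)\ge |U|+k-1$ for every $T$. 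Taking expectations and using $\mathbb{E}_\mu[\deg_T(v)]<2$ for each $v\in U$ yields $|U|+k-1<2|U|$, i.e.\ $k\le |U|$, which is precisely $1$-toughness. All steps check out, including the edge-count inequality and the integrality step turning the strict inequality into $k\le|U|$. The main advantage of your route is that it is elementary and sits entirely inside the probabilistic framework the paper itself develops; Fiedler's proof buys a connection to simplex geometry that the paper cites for broader context but does not rely on for the rest of its development.
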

In this result, we point out that positive resistance curvature depends on a choice of edge weights $c$, while $1$-tough is a purely combinatorial property that only depends on the graph data $G=(V,E)$. Thus, if one can find some edge weights for which $p>0$, then Theorem \ref{thm: toughness} applies and the graph is 1-tough. This observation naturally motives the following question:
\begin{question}\label{q: main question}
For which graphs $G$ does there exist a choice of edge weights $c$ such that $p\geq 0$, respectively $p>0$? We call these graphs \emph{resistance nonnegative} (RN) and \emph{resistance positive} (RP). A graph which is RN but not RP is called \emph{strictly resistance nonnegative} (SRN).
\end{question}
~\vspace{-2em}
\begin{figure}[h!]
    \centering
    \includegraphics[width=0.7\linewidth]{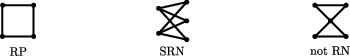}
    \caption{An RP, SRN and not RN graph. See Examples \ref{ex: RP graph}--\ref{ex: not RN graph} for details.}
    \label{fig: first examples}
\end{figure}

Geometrically, by thinking of edge weights as inverse edge lengths, this question can be reformulated as ``which graphs admit a metric with nonnegative/positive scalar curvature". Replacing ``graph" by ``manifold", this parallels a classical question in differential geometry \cite{schoen_1987_structure}. With the definitions in Question \ref{q: main question}, we can turn Theorem \ref{thm: toughness} into a purely structural result.
\begin{theorem}\label{thm: RP implies 1-tough}
Connected resistance positive graphs are 1-tough.
\end{theorem}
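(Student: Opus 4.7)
The plan is to derive Theorem \ref{thm: RP implies 1-tough} directly from Theorem \ref{thm: toughness} by observing that resistance positivity is exactly the statement that a certificate for Theorem \ref{thm: toughness} exists, while the conclusion of being $1$-tough is a property of the underlying combinatorial graph $G$ and does not depend on the choice of weights.

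First I would unpack the definition: by Question \ref{q: main question}, a connected graph $G$ being resistance positive means that there exist edge weights $c:E\to \R_{>0}$ such that the resistance curvature $p_v(c) > 0$ at every vertex $v\in V$. Thus the weighted graph $(G,c)$ has positive resistance curvature in the sense of Theorem \ref{thm: toughness}. Applying that theorem to $(G,c)$ yields that $(G,c)$ is $1$-tough.

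Second, I would note (and perhaps explicitly emphasize, as the authors already did in the paragraph after Theorem \ref{thm: toughness}) that $1$-toughness is a purely combinatorial property of the pair $G=(V,E)$: the definition only involves the number of connected components of $G-U$ for vertex subsets $U\subset V$, which is independent of $c$. Therefore the $1$-toughness of $(G,c)$ is the same statement as the $1$-toughness of $G$, completing the proof.

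I do not foresee any real obstacle here; the theorem is essentially a reformulation of Theorem \ref{thm: toughness} in the language introduced by Question \ref{q: main question}. The only thing to be careful about is to make sure that the existential quantifier over edge weights in the definition of resistance positive is handled correctly, i.e., one chooses the specific witness $c$ for which $p(c)>0$ and then applies Theorem \ref{thm: toughness} to that witness.
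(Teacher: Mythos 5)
Your proof is correct and is exactly the paper's (implicit) argument: the paper treats Theorem \ref{thm: RP implies 1-tough} as an immediate reformulation of Theorem \ref{thm: toughness}, explained in the paragraph preceding Question \ref{q: main question}, by choosing a witness weighting $c$ with $p(c)>0$ and noting that $1$-toughness is weight-independent. You have simply written out that one-line observation in full.
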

In this article, we will answer Question \ref{q: main question} and study characterizations and properties of RN, RP and SRN graphs. We formulate several questions for future research. 

\subsection{Main results}
As a first main result, we show that RN, RP and SRN graphs can be characterized in terms of the existence of certain distributions on their spanning trees; a distribution is \emph{positive} if every spanning tree is assigned a non-zero probability.
\begin{customthm}{\ref{th: RN and expected degree}}\label{thm: distribution characterization}
A graph $G$ is RN (resp. RP) if and only if there exists a positive distribution on the set of spanning trees of $G$, such that the expected degree of every vertex in a random spanning tree under this distribution is at most $2$ (resp. strictly smaller than $2$).
\end{customthm}
The actual statement of Theorem \ref{thm: distribution characterization} in the main text is stronger and allows for other types of distributions. Our proof makes use of the fact that $(c_{uv}\omega_{uv})_{uv\in E}$ are marginal probabilities of a so-called log-linear distribution on spanning trees induced by $c$. By classical results in statistics on log-linear distributions, as one ranges over all possible weights $0<c<\infty$ these marginal probabilities trace out the (interior of the) set of marginals of all possible distributions on spanning trees. This set is a polytope $P(G)\subset \R^E$ and is called the \emph{spanning tree polytope} of $G$. Since the expected degree constraints are affine, a practical consequence of Theorem \ref{thm: distribution characterization} is that computing the RN property of a graph is a linear program on $P(G)$.
\\~\\
As a second main result, we give a geometric characterization of RN graphs via the intersection of two polytopes associated to a graph; recall that a matching is a subset of edges $M\subseteq E$ which covers every vertex at most once. The spanning tree polytope and \emph{matching polytope} of $G$ are
$$
P(G) :=\operatorname{conv}(e_T\mid T\text{~is a spanning tree})\subset \R^E, \quad M(G) := \operatorname{conv}(e_M\mid M\text{~is a matching})\subset \R^E,
$$
where $e_T=\sum_{i\in T}e_i$ with $e_i$ basis vectors of $\R^E$. We use the notation $2M(G)=\{2x\mid x\in M(G)\}$, and $P(G)^{\circ}$ for the relative interior of the spanning tree polytope (see Remark \ref{rem: open spanning tree polytope}).
\begin{customthm}{\ref{thm: intersecting trees and matchings}}
A graph $G$ is RN if and only if $P(G)^{\circ}\cap2M(G)$ is non-empty.
\end{customthm}
Following this theorem, we initiate a study of the polytope $\Theta(G)=P(G)\cap 2M(G)$.
\begin{customprop}{\ref{prop: integer points are hamiltonian paths}}
The integer points of $\Theta(G)$ are indicator vectors of Hamiltonian paths in $G$.
\end{customprop}
As a direction for future research, we propose to study Ehrhart-type questions for this polytope: what can be said about the integer points of $k\Theta(G)$ for $k\in\N$? Combinatorially, this requires counting arrangements of double matchings that give arrangements of spanning trees.
\\~\\
As a third line of results, we study some operations on graphs and their effect on resistance nonnegativity. We show that adding edges preserves RP, that `Kron reduction' (Definition \ref{def: definition Kron reduction}) preserves both RP and RN but not SRN and that `circle inversion' (Definition \ref{def: definition circle inversion}) preserves RN. One application of these closure results is the following theorem:
\begin{customthm}{\ref{thm: Hamiltonian implies RP}}
Hamiltonian graphs are resistance positive, but the converse is not always true.
\end{customthm}
Graph toughness was introduced by Chv\'{a}tal to study Hamiltonian graphs and $1$-tough does not imply Hamiltonicity; see \cite{bauer_2006_toughness}. We ask whether the class of RP graphs lies strictly between Hamiltonian and $1$-tough graphs and propose a relaxation of a conjecture of Chv\'{a}tal.
\\~\\
Finally, we discuss two further characterizations of RN and RP graphs. Similar to encoding the RN conditions for a graph $G$ by affine inequalities on the space $P(G)\subset \R^E$ that parametrizes all effective resistances in a graph, we introduce two other parametrization spaces $\mathcal{K}(G)\subset \R^{V\times V}$ and $\mathcal{S}(G)\subset [0,1]^{2^V}$ on which the RN conditions are affine inequalities. Here, $\mathcal{K}(G)$ is the set of all inverse resistance matrices of a graph and $\mathcal{S}(G)$ the space of all resistance capacities. The latter are certain set function on $2^V$ induced by effective resistances. The conjectured inequalities on $\mathcal{S}(G)$ are the submodularity inequalities, which are known to be necessary for RN.
\\~\\
\textbf{Organization:} In Section \ref{sec: relative resistances and spanning tree polytope} we discuss effective resistances, their relation to distributions on spanning trees, and the spanning tree polytope $P(G)$. In Section \ref{sec: resistance nonnegative graphs}, we define RN, RP and SRN graphs, give some examples and prove a first characterization result, Theorem \ref{th: RN and expected degree}. Section \ref{sec: trees and double matchings} deals with the second characterization result, Theorem \ref{thm: intersecting trees and matchings}, which relates resistance nonnegativity and matchings. We initiate the study of the polytope $\Theta(G)$. In Section \ref{sec: algebraic characterization} we prove a third characterization result, Corollary \ref{cor: RN from inverse resistance matrices}, in terms of the space of inverse resistance matrices $\mathcal{K}(G)$. Section \ref{sec: graph operations} deals with three graph operations and their influence on resistance nonnegativity. Lastly, Section \ref{sec: nonnegativity and submodularity} conjectures a fourth characterization, Conjecture \ref{conj: RN from submodularity}, which relates resistance nonnegativity to submodularity of the resistance capacity.

\section{Relative resistance \& the spanning tree polytope}\label{sec: relative resistances and spanning tree polytope}
Throughout the article, we will consider finite graphs $G=(V,E)$ which are simple (no multi-edges), loopless (no $vv$ edges) and connected, unless stated otherwise. A graph is \emph{biconnected} if $G-v$ is connected for every $v\in V$. Many of the presented results will hold in a more general setting, but this is left for later development. The following definition is central in this article:
\begin{definition}[spanning tree]
A \emph{spanning tree} of $G$ is a maximal acyclic subset of edges.
\end{definition}
In other words, a spanning tree is a set of edges $T\subseteq E$, such that $T$ contains no cycles in $G$ but adding any further edge from $E\setminus T$ creates a cycle. Every spanning tree in a connected graph has $\vert V\vert-1$ edges. We will use the fact that spanning trees form a matroid \cite[Ch. 13]{michalek_2021_invitation}.
\begin{definition}[spanning tree polytope]
The \emph{spanning tree polytope} of a graph $G$ is
$$
P(G) = \operatorname{conv}(e_T\mid T\text{~is a spanning tree})\subset \R^E.
$$
\end{definition} 
\begin{example}
The $3$-cycle has vertices $u,v,w$ and edges $uv,vw,uw$. It has $3$ spanning trees $\{uv,vw\}, \{vw,uw\},\{uv,uw\}$ and $P(G)$ is a triangle in $\R^3$ with vertices $(1,1,0),(0,1,1),(1,0,1)$.
\end{example}
\begin{example}[diamond graph]\label{ex: diamond graph}
Figure \ref{fig: spanning tree polytope} shows the $8$ spanning trees of the diamond graph $G_{\hdiamond}=K_4-e$, arranged according to the $1$-skeleton of the spanning tree polytope $P(G_{\hdiamond})\subset \R^5$.
\begin{figure}[h!]
    \centering
    \includegraphics[width=0.5\linewidth]{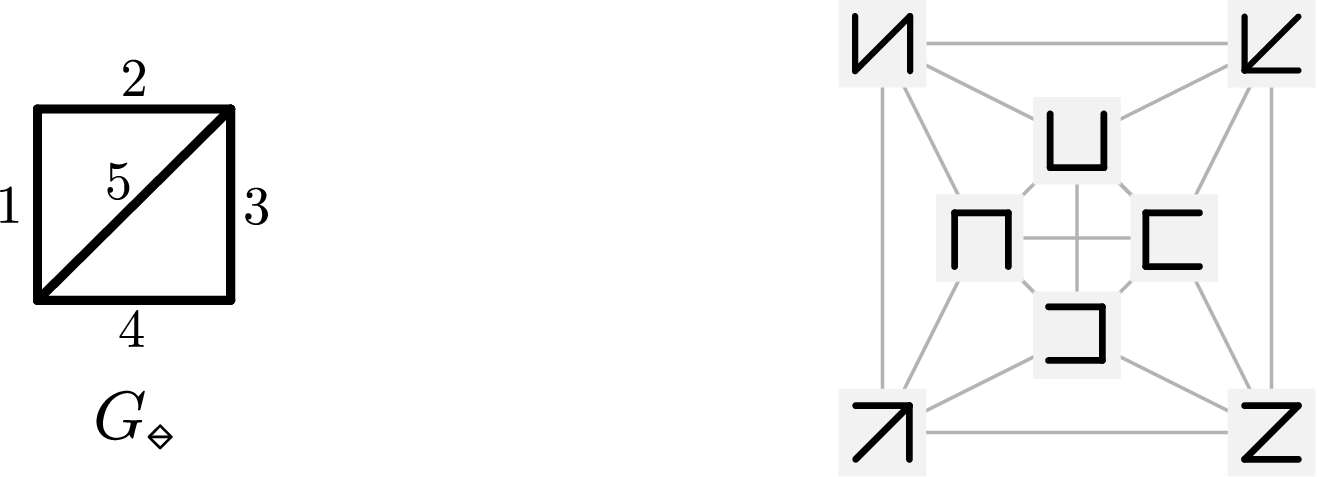}
    \caption{The diamond graph $G_{\hdiamond}$ with edge labels and the $1$-skeleton of its spanning tree polytope $P(G_{\hdiamond}$). The $8$ vertices of $P(G_{\hdiamond})$ are indicator vectors of     the spanning trees of $G_{\hdiamond}$ and the edges of $P(G_{\hdiamond})$ correspond to pairs of spanning trees that differ by two edges.}
    \label{fig: spanning tree polytope}
\end{figure}
\end{example}
\begin{remark}[open spanning tree polytope]\label{rem: open spanning tree polytope}
The spanning tree polytope is not full-dimensional in $\R^E$ by Foster's theorems (see below). For this reason, when referring to the \emph{open spanning tree polytope} $P(G)^{\circ}$, we mean its relative interior in the affine subspace in which it lives. In particular, the spanning tree polytope of a tree $T$ is a point $e_T$, and so is its relative interior. This is important when $G$ is a path, for instance in Theorem \ref{thm: intersecting trees and matchings}.
\end{remark}
In a weighted graph, one associates finite positive weights $(c_e)_{e\in E}$ to the edges of $G$. Rather than working with the effective resistance $\omega_{uv}$ as introduced in \eqref{eq: definition effective resistance}, we consider the \emph{relative resistance} $r_e=\omega_{e}c_{e}$ defined on the edges of a graph. We recall its definition.
\begin{definition}[relative resistance]
The \emph{relative resistance} of an edge $e\in E$ is
\begin{equation}\label{eq: definition relative resistance}
r_e(c) = \frac{\sum_{T\ni e}\prod_{t\in T}c_t}{\sum_{T}\prod_{t\in T}c_t}.
\end{equation}
\end{definition}
The homogeneous polynomial $Z_G(c)=\sum_T\prod_{t\in T}c_t$ in the denominator of \eqref{eq: definition relative resistance} is called the Kirchhoff polynomial of $G$, and we note that $r_e\cdot Z_G = c_e\cdot\partial Z_G/\partial c_e $. From Euler's homogeneous function theorem, one recovers the following classical result in electrical circuit analysis:
\begin{theorem}[Foster's theorem \cite{foster_1949_average}]
$\sum_{e\in E}r_e(c) = \vert V\vert -1$ for any $0<c<\infty$.
\end{theorem}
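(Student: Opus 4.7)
The plan is to follow exactly the roadmap sketched in the paragraph preceding the statement: recognize $Z_G$ as a homogeneous polynomial of the right degree and apply Euler's identity.

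First I would observe that $Z_G(c)=\sum_T\prod_{t\in T}c_t$ is a homogeneous polynomial of degree $|V|-1$ in the variables $(c_e)_{e\in E}$, since every spanning tree of a connected graph on $|V|$ vertices has exactly $|V|-1$ edges, so each monomial $\prod_{t\in T}c_t$ has total degree $|V|-1$. Next I would verify the identity $r_e\cdot Z_G = c_e\cdot \partial Z_G/\partial c_e$ stated in the text: differentiating $Z_G$ with respect to $c_e$ picks out exactly the monomials in which $c_e$ appears, i.e.\ those indexed by trees $T\ni e$, and removes one factor of $c_e$; multiplying back by $c_e$ restores the missing factor and gives $c_e\cdot\partial Z_G/\partial c_e = \sum_{T\ni e}\prod_{t\in T}c_t$, which by the definition \eqref{eq: definition relative resistance} equals $r_e(c)\cdot Z_G(c)$.

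Summing this identity over all edges and applying Euler's homogeneous function theorem then finishes the argument in one line:
\[
\left(\sum_{e\in E}r_e(c)\right)Z_G(c) \;=\; \sum_{e\in E}c_e\frac{\partial Z_G}{\partial c_e}(c)\;=\;(|V|-1)\,Z_G(c).
\]
Since $c>0$ and $G$ is connected, at least one spanning tree exists and $Z_G(c)>0$, so one may divide through by $Z_G(c)$ to obtain the desired equality $\sum_{e\in E}r_e(c)=|V|-1$.

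There is essentially no obstacle; the only thing requiring a moment's care is the bookkeeping in the partial derivative (since $G$ is simple and loopless, each variable $c_e$ appears with multiplicity at most one in every monomial, so no combinatorial factors appear), together with the non-vanishing of $Z_G$ on the positive orthant, which uses connectedness of $G$.
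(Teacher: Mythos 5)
Your proof is correct and is precisely the expansion of the argument the paper sketches in the paragraph before the statement (homogeneity of the Kirchhoff polynomial, the identity $r_e Z_G = c_e\,\partial Z_G/\partial c_e$, and Euler's homogeneous function theorem). The paper cites Foster rather than writing out the proof, but the intended route is exactly yours.
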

We strengthen Foster's theorem to a localized version on the biconnected components of $G$:
\begin{theorem}[local Foster's theorem]
$\sum_{e\in E(U)}r_e(c) = \vert U\vert -1$ for any $0<c<\infty$, whenever $U\subseteq V$ and $E(U)\subseteq E$ are the vertices and edges of a biconnected component of $G$.
\end{theorem}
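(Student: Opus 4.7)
The plan is to derive the local Foster's theorem from the global one by applying it separately to each biconnected component, after showing that relative resistances are in fact a local quantity within each block.

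The first step is the matroid-theoretic observation that the cycle matroid $M(G)$ decomposes as a direct sum $M(B_1)\oplus\cdots\oplus M(B_k)$, where $B_1,\dots,B_k$ are the biconnected components of $G$; this holds because every cycle of $G$ lies entirely inside one block. At the level of bases, this yields a bijection
$$\{\text{spanning trees of }G\}\;\longleftrightarrow\;\{\text{spanning trees of }B_1\}\times\cdots\times\{\text{spanning trees of }B_k\},\quad (T_1,\dots,T_k)\mapsto T_1\cup\cdots\cup T_k.$$
Expanding products over edges then gives a factorization of the Kirchhoff polynomial
$$Z_G(c)=\prod_{i=1}^{k}Z_{B_i}\bigl(c|_{E(B_i)}\bigr).$$

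The second step uses the relation $r_e\cdot Z_G=c_e\cdot\partial Z_G/\partial c_e$ recalled in the paper. For an edge $e\in E(U)$ of a block $B$, only the factor $Z_B$ in the above factorization depends on $c_e$, so
$$r_e(c)=c_e\,\frac{\partial\log Z_B(c|_{E(B)})}{\partial c_e}=r_e^{B}\bigl(c|_{E(B)}\bigr),$$
i.e.\ the relative resistance of $e$ inside $G$ coincides with its relative resistance inside the block $B$ viewed as a standalone connected graph. Summing over $e\in E(U)$ and applying the (global) Foster's theorem to the connected graph $B$ on $|U|$ vertices then gives $\sum_{e\in E(U)}r_e(c)=|U|-1$, as required. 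The degenerate case where $B$ is a bridge reduces to $|U|=2$ and the single edge having $r_e=1$, which is consistent.

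The only real content here is the matroid-theoretic block decomposition of spanning trees; since the paper already casts spanning trees in matroid language and $Z_G$ is the basis generating polynomial of that matroid, the factorization slots in cleanly, and I do not expect any serious obstacle beyond spelling out this decomposition carefully.
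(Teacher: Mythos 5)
Your proof is correct, but it takes a longer route than the paper's. Both rest on the same matroid fact --- the biconnected components of $G$ are exactly the connected components of its cycle matroid, so spanning trees decompose blockwise --- but they use it differently. The paper's own argument is a one-step calculation: every spanning tree of $G$ contains exactly $\vert U\vert-1$ edges of $E(U)$, so after summing the definition $r_e(c) = \sum_{T\ni e}\prod_{t\in T}c_t / Z_G(c)$ over $e\in E(U)$ and exchanging the order of summation, the inner count $\vert T\cap E(U)\vert = \vert U\vert - 1$ is constant, the remaining sum over $T$ cancels with the denominator $Z_G$, and the result is immediate. You instead push the block decomposition one level further, factorizing $Z_G=\prod_i Z_{B_i}$, deducing from the logarithmic-derivative formula that $r_e$ depends only on the weights inside the block of $e$, and then invoking the global Foster theorem for that block as a standalone graph. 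This is a valid alternative: the extra work buys you the strictly stronger locality statement $r_e(c)=r_e^{B}\bigl(c\vert_{E(B)}\bigr)$, which is of independent interest (it is the independence of electrical behaviour across 2-separations, and feeds into the paper's later remark that weights can be rescaled independently per block), whereas the paper's route is more economical since it needs neither the factorization of $Z_G$ nor a second application of Euler's homogeneous function theorem per block.
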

\begin{proof}
This follows from \eqref{eq: definition relative resistance} and the fact that every spanning tree has the same number of edges in every biconnected component, namely, the size of the component minus one. The biconnected components of a graph are the connected components of the corresponding cycle matroid. 
\end{proof}

Next, we consider the subset of $\R^E$ traced out by $(r_e(c))_{e\in E}$ as we range over all possible weights $c$. This will turn out to be precisely the interior of the polytope $P(G)$. This generalizes Foster's theorem, since it characterizes all possible tuples of relative resistances by a collection of affine equalities and inequalities in $r$. Our discussion will follow a probabilistic approach.
\\~\\
Let $\mathcal{T}=\mathcal{T}(G)$ be the collection of all spanning trees of $G$. Then $\mu_c:\mathcal{T}\rightarrow\R$ given by 
$$
\mu_c(T) = \frac{\prod_{t\in T}c_t}{Z_G(c)}>0 \quad\quad\text{~for $T\in\mathcal{T}$,}
$$
is a distribution on spanning trees, and we call $\mu_c(T)$ the probability of $T$. Since this distribution is of the form $\mu_c(T)\propto\operatorname{exp}(\sum_{t\in T}y_t)$ under the transformation $y_t=\log c_t$, this is also called a \emph{log-linear distribution}; see for instance \cite{sullivant_2024_algebraic}. We consider two further types of distributions on $\mathcal{T}$, with increasing generality. First, a \emph{positive distribution} assigns a positive probability to each spanning tree. For the second type, we need the following class of subsets of spanning trees:
\begin{proposition}\label{prop: non-separable trees}
The following are equivalent for a subset of spanning trees $\mathcal{F}\subseteq\mathcal{T}(G)$:
\begin{enumerate}
	\item $\conv(e_T\mid T\in\mathcal{F})$ intersects $P(G)^{\circ}$;
	\item $\conv(e_T\mid T\in\mathcal{F})$ is not contained in a proper face of $P(G)$;
	\item $\min(w(T)\mid T\in\mathcal{F})<\max(w(T)\mid T\in\mathcal{T})$ for all $w\in\R^E$,
\end{enumerate}
where $w(T)=\sum_{t\in T}w_t$ and $w$ is not piecewise constant on the biconnected components of $G$. We call these sets \emph{non-separable}; this is not standard terminology.
\end{proposition}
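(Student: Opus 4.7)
The plan is to handle the three equivalences in two steps: $(1)\Leftrightarrow(2)$ from general polytope theory, and $(2)\Leftrightarrow(3)$ from the linear-functional description of faces, with one matroidal lemma as the main technical input.

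For $(1)\Leftrightarrow(2)$, I would set $C=\conv(e_T\mid T\in\mathcal{F})$ and let $F$ be the inclusion-minimal face of $P(G)$ containing $C$. It is a standard consequence of convexity that $C\cap F^\circ\neq\emptyset$: otherwise $C$ would lie in the relative boundary of $F$, and a supporting-hyperplane argument would exhibit a strictly smaller face of $P(G)$ still containing $C$, contradicting minimality. Since the relative interiors of the faces of $P(G)$ partition $P(G)$, this yields $C\cap P(G)^\circ\neq\emptyset$ iff $F=P(G)$ iff $C$ is not contained in any proper face of $P(G)$, as required.

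For $(2)\Leftrightarrow(3)$, I would use that every proper face of $P(G)$ has the form $F_w=\{x\in P(G):w^\top x=\max_{x'\in P(G)}w^\top x'\}$ for some $w\in\R^E$, and that $F_w$ contains the vertex $e_T$ precisely when $w(T)=\max_{T'\in\mathcal{T}}w(T')$. Consequently $C\subseteq F_w$ iff $\min_{T\in\mathcal{F}}w(T)=\max_{T'\in\mathcal{T}}w(T')$, and $F_w$ is a proper face iff this maximum is not attained on all of $\mathcal{T}$. Ranging over all such $w$ and negating gives exactly condition $(3)$, provided the $w$'s realizing proper faces are identified with those $w$ that are not piecewise constant on the biconnected components of $G$.

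This last identification is the main technical step. One direction is immediate from the local Foster theorem: if $w$ takes constant value $c_i$ on each biconnected component $B_i$, then $w(T)=\sum_i c_i(|V(B_i)|-1)$ is independent of $T\in\mathcal{T}$. The converse rests on the basis exchange property of the graphic matroid: any two spanning trees are connected by a chain of single-edge swaps $T\mapsto T-e+e'$, and two edges are exchangeable in some spanning tree iff they lie on a common cycle, iff they belong to the same biconnected component. Hence $w(T)$ being constant on $\mathcal{T}$ forces $w_e=w_{e'}$ whenever $e,e'$ share a biconnected component, which is the desired piecewise constancy.
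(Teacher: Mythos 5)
Your proof is correct and follows the same two-step plan as the paper: $(1)\Leftrightarrow(2)$ by general face theory of polytopes, and $(2)\Leftrightarrow(3)$ by describing proper faces as maximizers of linear functionals. Where you go beyond the paper's proof is in the identification of the $w\in\R^E$ that cut out proper faces with those that are not piecewise constant on biconnected components. The paper invokes the local Foster theorem only for the easy direction (piecewise constant $w$ $\Rightarrow$ $\ell_w$ constant on $P(G)$, hence no proper face), and leaves the converse implicit; it is really justified there by the codimension count $\operatorname{codim} P(G) = \#\{\text{biconnected components}\}$ cited later in the proof of Corollary~\ref{cor: r hyperplane inequalities}. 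Your basis-exchange argument supplies exactly the missing converse, in a self-contained and more elementary way: if $e,e'$ lie on a common cycle $C$, extend $C-e'$ to a spanning tree $T$; since $C\triangle F\subseteq T$ for the fundamental cycle $F$ of $e'$ relative to $T$, acyclicity of $T$ forces $C=F$, so $T-e+e'$ is again a spanning tree and $w(T)$ constant on $\mathcal{T}$ forces $w_e=w_{e'}$. Both routes are valid; yours is preferable as a stand-alone argument because it does not rely on the matroid-polytope codimension fact.
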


\begin{proof}
Equivalence of the first two statements follows immediately from the fact that $(e_T)_{T\in\mathcal{F}}$ is a subset of the vertices of $P(G)$ and that proper faces of a polytope have positive codimension. The third statement says that there is no linear function $\ell_w: x\mapsto \langle w,x\rangle$ on $P(G)$ which is maximized on the whole set $(e_T)_{T\in\mathcal{F}}$. This is equivalent to saying that these points are not contained in a face of $P(G)$, which is statement 2. By Foster's local theorem, it follows that $\ell_w$ is constant on the vertices of $P(G)$ when $w$ is piecewise constant on biconnected components.
\end{proof}

A distribution on spanning trees is called \emph{non-separable} if it is supported on a non-separable subset of spanning trees. We note that {non-separable $\supset$ positive $\supset$ log-linear}. Finally, the tuple $(\sum_{T\ni e}\mu(T))_{e\in E} \in \R^E$ are called the \emph{edge-marginals} of the distribution $\mu$. These are the probabilities that a random spanning tree following $\mu$ contains each of the respective edges.
\begin{theorem}\label{thm: relative resistances and P(G)}
Let $G$ be a graph, then the following subsets of $\R^E$ coincide:
\begin{enumerate}
    \item the open spanning tree polytope $P(G)^{\circ}$;
    \item edge marginals of non-separable distributions on $\mathcal{T}(G)$;
    \item edge marginals of positive distributions on $\mathcal{T}(G)$;
    \item edge marginals of log-linear distributions on $\mathcal{T}(G)$;
    \item relative resistances of the edges in $G$, over all possible edge weights.
\end{enumerate}
Every point in this subset corresponds to a unique choice of edge weights, up to common rescaling in the biconnected components of $G$.
\end{theorem}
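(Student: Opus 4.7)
The plan is to establish the cyclic chain of inclusions $(4)=(5)\subseteq(3)\subseteq(2)\subseteq(1)\subseteq(4)$ and then address the uniqueness statement. The identity $(4)=(5)$ is immediate from the defining formula \eqref{eq: definition relative resistance}: the edge-marginal $\sum_{T\ni e}\mu_c(T)$ of the log-linear distribution $\mu_c$ equals $r_e(c)$ term by term. The inclusions $(4)\subseteq(3)\subseteq(2)$ hold tautologically, since $\mu_c(T)>0$ for every $T$ whenever $c>0$, and any positive distribution is automatically non-separable (its support being all of $\mathcal{T}$ trivially meets the conditions of Proposition \ref{prop: non-separable trees}).

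For $(2)\subseteq(1)$, I would take a non-separable distribution $\mu$ with support $\mathcal{F}\subseteq\mathcal{T}$ and consider its edge-marginal $x=\sum_{T\in\mathcal{F}}\mu(T)\,e_T$. This is a convex combination of vertices of $P(G)$ with strictly positive coefficients on $\mathcal{F}$. For any facet-defining inequality $\langle a,y\rangle\geq b$ of $P(G)$, condition 2 of Proposition \ref{prop: non-separable trees} guarantees at least one $T_0\in\mathcal{F}$ with $\langle a,e_{T_0}\rangle>b$; since $\mu(T_0)>0$ and $\langle a,e_T\rangle\geq b$ for all other $T$, we obtain $\langle a,x\rangle>b$. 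Thus $x$ satisfies every facet inequality of $P(G)$ strictly and hence lies in $P(G)^{\circ}$.

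The main obstacle is the reverse inclusion $(1)\subseteq(4)$, which requires showing that the log-linear marginal map $c\mapsto(r_e(c))_{e\in E}$ is surjective onto $P(G)^{\circ}$. Here I would invoke the theory of minimal exponential families: writing $y_e=\log c_e$, the log-partition function $A(y)=\log Z_G(e^y)$ is a smooth proper convex function on $\R^E$ whose gradient $\nabla A(y)$ recovers exactly the edge-marginals of $\mu_c$. A standard result on regular exponential families identifies the image of $\nabla A$ with the relative interior of the convex hull of the sufficient statistics $\{e_T:T\in\mathcal{T}\}$, which is precisely $P(G)^{\circ}$. Combined with the established inclusions above, this closes the cycle.

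Finally, for the uniqueness statement, I would use that the Kirchhoff polynomial factors multiplicatively over biconnected components, $Z_G(c)=\prod_i Z_{G_i}(c_{E_i})$, with each factor homogeneous of degree $\vert V_i\vert-1$ in $c_{E_i}$; consequently the distribution $\mu_c$, and hence the marginal vector, is invariant under independent positive rescaling of $c$ on each biconnected component. Conversely, the Hessian of $A$ is positive definite modulo exactly the subspace of log-weights that are constant on each biconnected component; this is the infinitesimal counterpart of the local Foster theorem, which pins down the affine hull of $P(G)^{\circ}$. Strict convexity of $A$ on the complementary subspace then yields uniqueness of $c$ (in log coordinates, modulo constants on components), as required.
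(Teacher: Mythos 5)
Your proof is correct, and the core of the argument is the same as the paper's: the hard step, surjectivity of the log-linear marginal map onto $P(G)^{\circ}$, is a restatement of Birch's theorem, which the paper cites from \cite[Cor. 8.25]{michalek_2021_invitation}; your invocation of the general theory of regular exponential families (image of $\nabla A$ equals the relative interior of the convex hull of sufficient statistics) is the same fact in different clothing. Where you differ is in the packaging: the paper dismisses the equivalence of (1), (2), (3) as ``equivalent by definition,'' whereas you give the facet-by-facet argument showing that the edge marginal of a non-separable distribution satisfies every facet inequality strictly --- this is a genuine and welcome expansion of a step the paper leaves implicit. For uniqueness, the paper argues combinatorially (every spanning tree has the same number of edges in each biconnected component, hence rescaling within a component preserves $\mu_c$, and Birch's uniqueness does the rest), while you argue analytically via the Hessian of the log-partition function being positive definite modulo the subspace of log-weights constant on biconnected components; both identify the same kernel, and both are sound, but your phrasing ``modulo constants on components'' should read ``modulo vectors piecewise constant on biconnected components'' to avoid ambiguity. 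One small caution: you call $A$ a ``proper'' convex function, which is true in the standard convex-analysis sense but should not be confused with coercivity --- $A$ is linear, not coercive, in directions lying in the kernel of its Hessian, which is precisely why uniqueness only holds up to rescaling on biconnected components.
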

\begin{proof}
The translation between probability and geometry goes via interpreting the probabilities $\mu(T)$ as convex coefficients of the vertices $e_T$. The edge marginals of a distribution are then precisely the coordinates of the point in $P(G)$ determined by these convex coefficients:
$$
\Big(\sum_{T}\mu(T)e_T\Big)_i\,=\,\sum_{T}\mu(T)(e_T)_i \,=\, \sum_{T\ni i}\mu(T)\quad\quad\text{~for $i\in E$}.
$$
With this translation, statements 1., 2. and 3. are equivalent by definition of non-separable and positive distributions. By the inclusion {positive $\supset$ log-linear} we have $4.\Rightarrow 3.$ and by definition of relative resistances, we have $5.\Leftrightarrow 4.$ It remains to prove $3. \Rightarrow 4.$ and the correspondence of points in $P(G)^{\circ}$ with unique weights.

Birch's theorem in statistics \cite[Cor. 8.25]{michalek_2021_invitation} says that there is a unique log-linear distribution $\mu_c$ among all positive distributions with the same edge marginals. This is precisely $3.\Rightarrow 4.$

Finally, the only way in which different choices of edge weights $c,\tilde{c}$ lead to the same log-linear distribution $\mu_c=\mu_{\tilde{c}}$ is if they differ by a multiplicative factor in each biconnected component. This follows from the fact that all spanning trees $T$ contain equally many edges from each of the biconnected components, and that these are the only edge subsets with this property. Thus, by uniqueness from Birch's theorem, every point $r\in P(G)^{\circ}$ corresponds to a unique choice of finite positive edge weights $c$, up to rescaling in the biconnected components of $G$.
\end{proof}

We illustrate the equivalences in Theorem \ref{thm: relative resistances and P(G)} for the diamond graph (Example \ref{ex: diamond graph}).
\begin{example}
The open spanning tree polytope $P(G_{\hdiamond})^{\circ}$ contains the point $(\tfrac{3}{5},\dots,\tfrac{3}{5})$. We give three distributions $\mu_1,\mu_2,\mu_3$ on $\mathcal{T}(G_{\hdiamond})$ that have this point as edge marginal. First, let
$$
\mu_1(\raisebox{-0.2em}{\includegraphics[width=0.02\textwidth]{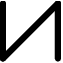}}) 
= \mu_1(\raisebox{-0.2em}{\includegraphics[width=0.02\textwidth]{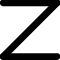}}) 
= \tfrac{3}{10}
\quad\text{~and~}\quad
\mu_1(\raisebox{-0.2em}{\includegraphics[width=0.02\textwidth]{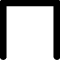}}) 
= \mu_1(\raisebox{-0.2em}{\includegraphics[width=0.02\textwidth]{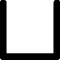}})
= \mu_1(\raisebox{-0.2em}{\includegraphics[width=0.02\textwidth]{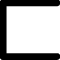}})
= \mu_1(\raisebox{-0.2em}{\includegraphics[width=0.02\textwidth]{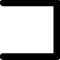}})
= \tfrac{1}{10} \quad\text{~and\quad$\mu_1=0$ otherwise.}
$$
The distribution $\mu_1$ is non-separable but not positive (e.g. $\mu_1(\raisebox{-0.2em}{\includegraphics[width=0.02\textwidth]{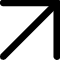}})=0$). Second, let
$$
\mu_2(\raisebox{-0.2em}{\includegraphics[width=0.02\textwidth]{tree1.eps}})
=
\mu_2(\raisebox{-0.2em}{\includegraphics[width=0.02\textwidth]{tree2.eps}})
=
\mu_2(\raisebox{-0.2em}{\includegraphics[width=0.02\textwidth]{tree7.eps}})
=
\mu_2(\raisebox{-0.2em}{\includegraphics[width=0.02\textwidth]{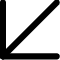}})
= \tfrac{3}{20}
\quad\text{~and~}\quad 
\mu_2(\raisebox{-0.2em}{\includegraphics[width=0.02\textwidth]{tree3.eps}})
=
\mu_2(\raisebox{-0.2em}{\includegraphics[width=0.02\textwidth]{tree4.eps}})
=
\mu_2(\raisebox{-0.2em}{\includegraphics[width=0.02\textwidth]{tree5.eps}})
= 
\mu_2(\raisebox{-0.2em}{\includegraphics[width=0.02\textwidth]{tree6.eps}})
= \tfrac{2}{20}.
$$ 
The distribution $\mu_2$ is a positive, non-separable and log-linear distribution induced by edge weights $c=(2,2,2,2,3)$, following the edge labeling in Figure \ref{fig: spanning tree polytope}. Third, the distribution $\mu_3=\tfrac{1}{2}\mu_1+\tfrac{1}{2}\mu_2$ is positive and non-separable but not log-linear. Finally, we note that the relative resistances in $G_{\hdiamond}$ with edge weights $c=(2,2,2,2,3)$ are equal to $r(c)=(\tfrac{3}{5},\dots,\tfrac{3}{5})$.
\end{example}

\begin{remark}
Following Theorem \ref{thm: relative resistances and P(G)}, relative resistances $r\in P(G)^{\circ}$ can be used as an alternative way to parametrize edge weights on graphs. The theory of moment maps in symplectic geometry \cite{kirwan_1984_convexity} and toric geometry \cite[Thm. 8.24]{michalek_2021_invitation} furthermore implies that the relation between these two parametrizations is a diffeomorphism. More precisely, the interior of $P(G)$ is diffeomorphic, under the inverse moment map, to the space of all $c$-weighted cut spaces of the graph, i.e., the $c$-scaled images of the coboundary operator $d^*_G:\R^V\rightarrow\R^E$. These vector subspaces form the real open part of a toric subvariety of the Grassmannian. This is a well-known perspective for matroid theorists since \cite{gelfand_1987_combinatorial}, but the interpretation of effective and relative resistances as images of the moment map seems to have gone unnoticed until recently in \cite{devriendt_2024_lives}.
\end{remark}
\begin{remark}[computation]
In practice, the relation between edge weights and relative resistances is efficiently computable. Given $c$, one can compute the relative resistances $r$ as the diagonal of the projection matrix onto the $c$-weighted cut space \cite{devriendt_2024_lives}. The hardest step in this procedure is a $\vert V\vert$-sized matrix inversion. Alternatively, there are highly optimized approximation algorithms \cite{spielman_2011_sparsification}. Given $r$, one can compute the corresponding edge weights iteratively: pick a random point $\hat{c}\in \R^E_{>0}$ and compute $\hat{r}(\hat{c})$. Then repeat the steps below until $\hat{r}\approx r$ as desired:
\begin{align*}
&\text{1. Let $\hat{c}_e \,\leftarrow\, \hat{c}_e\cdot (r_e/\hat{r}_e)^{1/(\vert V\vert-1)}$ for all $e\in E$}\\
&\text{2. Compute $\hat{r}(\hat{c})$}
\end{align*}
The theory of operator scaling guarantees convergence of the iterates in polynomial time \cite{garg_2020_operator}.
\end{remark}

We conclude this section by giving an explicit characterization of relative resistances based on the hyperplane description of $P(G)$. This greatly generalizes the bounds in \cite[Prop. 2.16-17]{devriendt_2022_thesis}.
\begin{corollary}\label{cor: r hyperplane inequalities}
Let $G$ be a graph. Then $r\in \R^E$ are relative resistances in $G$ if and only if
\begin{equation}\label{eq: hyperplane inequalities}
0\leq \sum_{e\in A}r_e \leq \vert V\vert - \textup{\#components of $(G-E\backslash A)$}\quad\quad\textup{for all $A\subseteq E$},
\end{equation}
with equality in the upper bound if and only if $A$ are the edges of an induced subgraph on some (possibly none or all) biconnected components of $G$, and equality in the lower bound if and only if $A$ is empty.
\end{corollary}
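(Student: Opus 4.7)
The plan is to combine Theorem \ref{thm: relative resistances and P(G)} with the Edmonds description of the base polytope of the graphic matroid of $G$, and then analyse the equality cases via the block decomposition. By Theorem \ref{thm: relative resistances and P(G)}, $r \in \R^E$ is a tuple of relative resistances if and only if $r \in P(G)^{\circ}$, so the task reduces to (i) giving the hyperplane description of the closure $P(G)$ and (ii) determining which of these inequalities are tight at interior points. For (i), the spanning tree polytope is the base polytope of the graphic matroid $M(G)$, whose rank function is $\rho(A) = \vert V\vert - c(V,A)$ where $c(V,A)$ is the number of components of $G - (E \setminus A)$. Edmonds' matroid polytope theorem then gives
\begin{equation*}
P(G) \,=\, \bigl\{x \in \R^E_{\geq 0}\,:\, x(A) \leq \rho(A) \text{ for every } A \subseteq E\bigr\},
\end{equation*}
which is precisely \eqref{eq: hyperplane inequalities}; the usual basis constraint $x(E) = \vert V\vert - 1$ is recovered as the $A = E$ instance of the upper bound.

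For the upper-bound equality case in (ii): since $r \in P(G)^{\circ}$, the inequality $r(A) \leq \rho(A)$ is strict unless the functional $x \mapsto x(A)$ is constantly $\rho(A)$ on all of $P(G)$, i.e.\ unless every spanning tree $T$ satisfies $\vert T \cap A\vert = \rho(A)$. The graphic matroid $M(G)$ is the direct sum of the matroids of the blocks of $G$, so every spanning tree contains exactly $\vert V(B)\vert - 1$ edges from each block $B$ (this is the local Foster theorem). Consequently, if $A$ is the union of the edge sets of some blocks, summing over those blocks gives $\vert T \cap A\vert = \rho(A)$ for every spanning tree $T$. Conversely, if $A$ is not such a union then some block $B$ satisfies $\emptyset \subsetneq A \cap E(B) \subsetneq E(B)$, and since the matroid of a single block is connected, two spanning trees of $B$ can be chosen whose intersections with $A \cap E(B)$ have distinct sizes; extending these to spanning trees of $G$ via the direct-sum structure yields two spanning trees with different $\vert T \cap A \vert$.

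The lower-bound equality $\sum_{e \in A} r_e = 0$ forces $A = \emptyset$ because $r_e > 0$ for every $e \in E$: in a connected graph every edge lies in at least one spanning tree, making the numerator of \eqref{eq: definition relative resistance} strictly positive. The main step is the upper-bound equality characterization, which rests on the matroid-theoretic fact that $A$ is a separator of $M(G)$ (equivalently, a union of edge sets of blocks of $G$) if and only if every basis has the same intersection size with $A$; the remaining assertions follow routinely from Edmonds' theorem and the positivity of $r_e$.
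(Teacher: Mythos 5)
Your proof is correct, and both you and the paper start from the same base: $r$ is a tuple of relative resistances iff $r \in P(G)^{\circ}$ (Theorem \ref{thm: relative resistances and P(G)}), together with the Edmonds/matroid-base-polytope hyperplane description of $P(G)$ via the rank function of the graphic matroid. Where you diverge is in handling the equality cases. The paper exhibits equalities for $A$ a union of biconnected-component edge sets (local Foster) and then invokes a codimension count --- the codimension of the base polytope equals the number of connected components of the matroid --- to argue no further equalities can persist on the relative interior. You instead observe that an inequality $r(A) \leq \rho(A)$ is tight at a relative-interior point iff the functional $x \mapsto x(A)$ is constant on all of $P(G)$, i.e.\ iff every basis meets $A$ in the same number of elements, which is the definition of $A$ being a separator of the graphic matroid, equivalently a union of block edge sets. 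Your route is somewhat more self-contained and makes the converse direction fully explicit (the codimension argument as stated in the paper leaves a small linear-algebra step implicit: that any affine functional vanishing on $P(G)$ of the form $x(A)-\rho(A)$ must in fact arise from a union of blocks, not merely from an integer combination of the block equalities). Your handling of the lower-bound equality via $r_e > 0$ for every $e$ is also slightly more direct than the paper's. Both arguments are sound; yours trades the polytope-codimension lemma for the elementary matroid-separator characterization.
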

\begin{proof}
Expression \eqref{eq: hyperplane inequalities} are the hyperplane inequalities that cut out the spanning tree polytope $P(G)$ in $\R^E$; this follows for instance from the hyperplane description of matroid base polytopes in \cite[Prop. 2.3]{feichtner_2005_matroid}. By Foster's local theorem, the upper bound in \eqref{eq: hyperplane inequalities} is an equality when $A$ are the edges of an induced subgraph on biconnected components of $G$. This includes $A=E$, and $A=\emptyset$ where also the lower bound is attained and equal to zero.

Since the codimension of $P(G)$ is equal to the number of biconnected components of $G$ --- the corresponding matroid polytope has codimension equal to the number of connected components of the matroid \cite[Prop. 2.4]{feichtner_2005_matroid} --- no further independent equalities can hold on the relative interior of $P(G)$. Hence all other inequalities in \eqref{eq: hyperplane inequalities} must be strict, which completes the proof.
\end{proof}

Finally, we note that the results above extend to weights which can be zero or infinite. This closure is well-understood from the perspective of toric geometry, and it completes the space of possible relative resistances to the full spanning tree polytope $P(G)$. 

\section{Resistance nonnegative graphs}\label{sec: resistance nonnegative graphs}
We return to the main topic of this article. Recall the definition of resistance curvature.
\begin{definition}[resistance curvature] The \emph{resistance curvature} of a vertex $v\in V$ is
$$
p_v(c) = 1 - \frac{1}{2}\sum_{uv\in E}r_{uv}(c).
$$
\end{definition}
We make the definitions introduced in Question \ref{q: main question} explicit.
\begin{definition}[RN, RP, SRN]
A graph is \emph{resistance nonnegative} (RN) if there exist weights $0<c<\infty$ such that $p_v(c)\geq 0$ for all vertices $v$. A graph is \emph{resistance positive} (RP) if there exists weights such that $p_v(c)>0$ for all vertices $v$. A graph is \emph{strictly resistance nonnegative} (SRN) if it is RN but not RP.
\end{definition}
Applying the definition, we find that the diamond graph in Example \ref{ex: diamond graph} is RP, as certified by the relative resistances $(\tfrac{3}{5},\dots,\tfrac{3}{5})\in P(G_{\hdiamond})^{\circ}$. We further consider the examples in Figure \ref{fig: first examples}.
\begin{example}[RP graph]\label{ex: RP graph}
If $G$ is vertex-transitive, then for constant edge weights $c=1$ we find $p_v(1)=\vert V\vert^{-1}$ for all $v\in V$. This is shown for instance in \cite[Prop. 7]{devriendt_2022_curvature}. As a result, every vertex-transitive graph is RP. In particular, cycle graphs and the Petersen graph are RP.
\end{example}
\begin{example}[SRN graph]\label{ex: SRN graph}
For $n\in\N$, let $G=K_{n,n+1}$ be the near-balanced complete bipartite graph on vertex sets $V_n,V_{n+1}$, and pick constant edge weights $c=1$. Since $G$ is edge-transitive, $r_e(1)$ is constant and by Foster's theorem we find $p_v(1)=0$ for $v\in V_n$ and $p_{u}(1)=1/(n+1)$ for $u\in V_{n+1}$. Thus $K_{n,n+1}$ is RN.

Then, for any choice of edge weights $c$, we use Foster's theorem to find that
$$
\sum_{v\in V_{n}}p_v(c) = n - \tfrac{1}{2}\sum_{v\in V_{n}}\sum_{u\in V_{n+1}}r_{uv}(c) = n - \tfrac{1}{2}\sum_{e\in E}r_e(c) \overset{\text{(Foster)}}{=} n - \tfrac{1}{2}(2n+1 - 1) = 0.
$$
So in particular we cannot have $p_v(c)>0$ for all $v\in V_{n}$ and thus $K_{n,n+1}$ must be SRN. The same approach works in any bipartite graph, which leads to the following result:
\end{example}
\begin{proposition}\label{prop: RN RP and bipartite}
A bipartite graph $G$ is not RN if its bipartition sizes differ by more than one and $G$ is not RP if its bipartition sizes differ.
\end{proposition}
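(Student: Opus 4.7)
The plan is to generalize the $K_{n,n+1}$ computation from the preceding example, which already contains all the essential ideas. The point is that bipartiteness makes the sum of $p_v$ over one side of the bipartition collapse to something directly computable by Foster's theorem.

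Specifically, let $G$ be bipartite with parts $A$ and $B$ of sizes $a$ and $b$, assuming without loss of generality $a \leq b$. For any weights $c$, I would sum the curvature over the smaller side:
\[
\sum_{v \in A} p_v(c) \;=\; a \;-\; \tfrac{1}{2}\sum_{v\in A}\sum_{uv\in E} r_{uv}(c).
\]
Since every edge of a bipartite graph has exactly one endpoint in $A$, the double sum equals $\sum_{e\in E} r_e(c)$, which by Foster's theorem is $|V|-1 = a+b-1$. Hence
\[
\sum_{v\in A} p_v(c) \;=\; \frac{a-b+1}{2}
\]
for every choice of weights $c$.

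From this identity both claims follow immediately. If the bipartition sizes differ by more than one, i.e.\ $b \geq a+2$, then $\sum_{v\in A} p_v(c) \leq -\tfrac{1}{2} < 0$, so at least one vertex in $A$ satisfies $p_v(c) < 0$, and $G$ is not RN. If the sizes merely differ, i.e.\ $b \geq a+1$, then $\sum_{v\in A} p_v(c) \leq 0$, which precludes all curvatures in $A$ being strictly positive, so $G$ is not RP.

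There is really no main obstacle here: the argument is a direct application of Foster's theorem plus the structural fact that bipartiteness eliminates any double-counting in the edge sum. The only thing one should be careful about is stating the assumption $a \leq b$ explicitly so that ``differ by more than one'' is unambiguously interpreted as $|b-a|\geq 2$, and noting that the inequality $\sum_{v\in A} p_v \leq 0$ is strict precisely when the bipartition sizes differ by at least two, which cleanly separates the RN and RP statements.
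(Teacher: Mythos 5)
Your proof is correct and is precisely the generalization the paper has in mind: the paper presents the $K_{n,n+1}$ computation as an example and then states that ``the same approach works in any bipartite graph,'' which is exactly the identity $\sum_{v\in A} p_v(c) = (a-b+1)/2$ you derive via Foster's theorem and the fact that each edge meets $A$ exactly once. Nothing to add.
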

\begin{example}[not RN graph]\label{ex: not RN graph}
Let $G$ be a graph with cut vertex $x$, i.e., $G-x$ is disconnected. Following \cite[Prop. 2]{devriendt_2022_curvature}, we then have that $p_x(c)\leq 0$ with equality if and only if $x$ is an interior vertex in a path graph $G$. Thus the only non-biconnected RN graphs are paths, which are SRN.
\end{example}
\begin{proposition}
If $G$ is RN and not biconnected then $G$ is a path, and if $\vert V\vert\geq 3$ it is SRN.
\end{proposition}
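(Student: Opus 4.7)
The plan is to deduce the proposition almost immediately from the fact already quoted in the paragraph just above, namely that at any cut vertex $x$ of $G$ and for any choice of edge weights $c$, one has $p_x(c) \le 0$, with equality if and only if $x$ is an interior vertex of a path graph $G$. I would split the statement into its two assertions and treat them in order.

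For the first assertion, I would argue by contrapositive flavor: assume $G$ is RN and not biconnected. Not biconnected means there is a cut vertex $x \in V$. Pick weights $c$ witnessing the RN property, so $p_v(c) \ge 0$ for all $v$, and in particular $p_x(c) \ge 0$. The cited inequality forces $p_x(c) \le 0$, hence $p_x(c) = 0$. The equality case of that same result then says that $G$ must be a path and $x$ one of its interior vertices, which is what we wanted.

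For the second assertion, suppose $G$ is a path on $n = |V| \ge 3$ vertices. Since $G$ is a tree it is its own unique spanning tree, so every edge lies in every spanning tree and the relative resistances are $r_e(c) = 1$ for all $e \in E$, independent of $c$. The two endpoints then have $p_v(c) = 1 - \tfrac12 = \tfrac12 > 0$, while every interior vertex (which exists because $n \ge 3$) has $p_v(c) = 1 - \tfrac12 (1+1) = 0$. Hence $p(c) \ge 0$ always holds, so $G$ is RN, but one cannot achieve $p > 0$ at the interior vertices for any $c$, so $G$ is not RP; that is, $G$ is SRN.

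There is essentially no obstacle here — the content is entirely in the lemma being quoted from \cite{devriendt_2022_curvature}. The only thing to be slightly careful about is the boundary case $|V| = 2$: if $G$ is a path on two vertices it is biconnected (trivially, since removing either vertex leaves a single vertex, which is connected) and also RP by the computation above, so it is consistently excluded from the SRN statement and covered by the $|V| \ge 3$ hypothesis.
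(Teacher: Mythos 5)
Your proof is correct and essentially follows the same line as the paper's implicit argument, which relies entirely on the cited fact that a cut vertex $x$ always satisfies $p_x(c) \le 0$ with equality precisely when $x$ is an interior vertex of a path. Your second part is a small variant: rather than reusing that same equality case to conclude $p_x(c)=0$ at interior path vertices, you compute directly that on a path (its own unique spanning tree) every $r_e(c)=1$ independently of $c$, giving $p=\tfrac12$ at endpoints and $p=0$ at interior vertices; this is slightly more self-contained and equally valid. Your note on the $|V|=2$ case is correct — $K_2$ is biconnected and RP, so it falls outside the proposition's hypothesis as well as the $|V|\ge 3$ clause.
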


We continue with a first characterization theorem that answers Question \ref{q: main question}.
\begin{theorem}\label{th: RN and expected degree}
A graph $G$ is RN (resp. RP) if and only if there exists a $\{$non-separable, log-linear, positive$\}$ distribution on $\mathcal{T}(G)$, such that the expected degree of every vertex in a random spanning tree under this distribution is at most $2$ (resp. strictly smaller than $2$). Any type of distribution is sufficient and all three types are necessary.
\end{theorem}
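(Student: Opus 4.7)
The plan is to translate the resistance curvature condition into a statement about edge marginals of distributions on $\mathcal{T}(G)$, and then invoke Theorem \ref{thm: relative resistances and P(G)} to interchange the three distribution types.

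The key identity is that for any distribution $\mu$ on $\mathcal{T}(G)$, the expected degree of $v$ in a random spanning tree equals the sum of the edge marginals over edges incident to $v$:
$$
\mathbb{E}_\mu[\deg_T(v)] \,=\, \sum_{uv\in E}\Pr_{\mu}[uv\in T],
$$
by linearity of expectation and $\deg_T(v)=\sum_{uv\in E}\mathbbm{1}[uv\in T]$. Specializing to a log-linear distribution $\mu_c$, the edge marginal of $e$ is precisely the relative resistance $r_e(c)$ (this is the content of statements 4 and 5 of Theorem \ref{thm: relative resistances and P(G)}, and is also visible directly from \eqref{eq: definition relative resistance}). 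Therefore
$$
\mathbb{E}_{\mu_c}[\deg_T(v)] \,=\, \sum_{uv\in E} r_{uv}(c) \,=\, 2\bigl(1-p_v(c)\bigr),
$$
so $p_v(c)\geq 0$ (resp. $p_v(c)>0$) for every $v$ if and only if every vertex has expected degree at most $2$ (resp. strictly less than $2$) under $\mu_c$. Ranging over all weights $0<c<\infty$ then proves the theorem for log-linear distributions.

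To extend to non-separable and positive distributions, I would simply note that the expected-degree condition is a linear inequality in the edge marginal vector. By Theorem \ref{thm: relative resistances and P(G)}, the edge marginals of non-separable, positive, and log-linear distributions trace out the exact same subset of $\R^E$, namely the open spanning tree polytope $P(G)^{\circ}$. Hence the existence of a distribution of any of these three types satisfying the expected-degree bound is equivalent to the existence of such a distribution of any other type, and all three are equivalent to the existence of a weighting $c$ with $p\geq 0$ (resp. $p>0$).

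The only point requiring any care is distinguishing the non-strict and strict cases. In the non-strict case the desired point lies in the (open) polytope $P(G)^{\circ}$ but the linear inequality $\sum_{uv\in E}r_{uv}\leq 2$ is allowed to be tight; in the strict case the inequality is required to be strict, which is still an open condition compatible with $P(G)^{\circ}$. Both situations are handled uniformly by the translation above, so I anticipate no real obstacle beyond bookkeeping. The substantive content of the theorem is already packaged into Theorem \ref{thm: relative resistances and P(G)}.
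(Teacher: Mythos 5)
Your proposal is correct and follows essentially the same route as the paper's proof: both translate the expected-degree condition into the identity $\mathbb{E}_\mu[\deg_T(v)]=\sum_{uv\in E}(\text{edge marginal of }uv)=2(1-p_v)$ and then invoke Theorem~\ref{thm: relative resistances and P(G)} to pass freely among non-separable, positive, and log-linear distributions. The only cosmetic difference is that the paper starts from a non-separable distribution (the weakest hypothesis) while you start from the log-linear case; this does not change the substance.
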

\begin{proof}
The expected degree of a vertex $v$ in a random spanning tree with distribution $\mu$ is
$$
\sum_{T}\mu(T)\sum_{uv\in T} 1 \,=\, \sum_{uv\in E}\Big(\sum_{T\ni uv}\mu(T)\Big).
$$
In other words, the expected degree is given in terms of the edge marginals of $\mu$. Let $\mu^*$ be a non-separable distribution for which the expected degree of each vertex is at most $2$. By Theorem \ref{thm: relative resistances and P(G)}, the edge marginals of $\mu^*$ are equal to relative resistances for some choice of edge weights $c^*$ and thus we find for every $v\in V$ that
$$
2\geq \sum_{uv\in E}\Big(\sum_{T\ni uv}\mu^*(T)\Big) = \sum_{uv\in E}r_{uv}(c^*) = 2-2p_v(c^*),
$$
which shows that $p(c^*)\geq 0$ and thus that $G$ is RN. Since non-separable distributions are the most general type, sufficiency of all three types follows. The converse statement and the strict version for RP graphs are proven in the same way.
\end{proof}

We now turn to a geometric version of this characterization. As the proof above illustrates, the expected degree constraints are affine inequalities on $\R^E$. We define the (open) halfspaces
$$
\mathcal{H}_v = \Big\{x \in\R^E \mid \sum_{uv\in E}x_{uv} \leq 2\Big\},\quad{\mathcal{H}_v}^{\circ} = \Big\{x\in\R^E\mid \sum_{uv\in E}x_{uv}<2 \Big\}\quad\quad \text{for~}v\in V.
$$
\begin{corollary}\label{cor: RN intersection halfspaces}
A graph $G$ is RN (resp. RP) if and only $P(G)^{\circ}$ and the halfspaces $\mathcal{H}_v$ (resp. open halfspaces $\mathcal{H}_v^{\circ}$) for all $v\in V$ have a common intersection point.
\end{corollary}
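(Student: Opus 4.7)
The plan is that this corollary is essentially a direct geometric reformulation of Theorem~\ref{th: RN and expected degree} via the dictionary provided by Theorem~\ref{thm: relative resistances and P(G)}. So I would keep the proof short and make all the translations explicit.

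First, I would observe that the curvature condition $p_v(c)\geq 0$ unwinds to $\sum_{uv\in E}r_{uv}(c)\leq 2$, which is exactly the defining inequality of $\mathcal{H}_v$ when the vector variable is the tuple of relative resistances $r(c)=(r_e(c))_{e\in E}\in\R^E$. The strict version gives $\mathcal{H}_v^{\circ}$ and corresponds to $p_v(c)>0$. Thus $G$ being RN (resp.\ RP) is by definition the existence of weights $0<c<\infty$ with $r(c)\in\bigcap_{v\in V}\mathcal{H}_v$ (resp.\ $\bigcap_{v\in V}\mathcal{H}_v^{\circ}$).

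Next, I would invoke Theorem~\ref{thm: relative resistances and P(G)}, which identifies $\{r(c)\mid 0<c<\infty\}$ with the open spanning tree polytope $P(G)^{\circ}$ and, moreover, gives a bijection (up to rescaling within biconnected components) between points of $P(G)^{\circ}$ and weight classes. So the set of realizable relative resistance tuples in $\R^E$ is exactly $P(G)^{\circ}$.

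With these two translations in place, the corollary follows immediately in both directions: for the forward direction, if $G$ is RN with witnessing weights $c$, then $r(c)$ is a point of $P(G)^{\circ}\cap\bigcap_v\mathcal{H}_v$; for the reverse, any common point $x\in P(G)^{\circ}\cap\bigcap_v\mathcal{H}_v$ is, by Theorem~\ref{thm: relative resistances and P(G)}, equal to $r(c^*)$ for some $0<c^*<\infty$, and membership in each $\mathcal{H}_v$ translates back into $p_v(c^*)\geq 0$. The RP case is identical with strict inequalities and $\mathcal{H}_v^{\circ}$. I do not expect any real obstacle here; the only thing to be careful about is that ``$P(G)^{\circ}$'' is the relative interior in the affine hull of $P(G)$ (as flagged earlier in the paper), so that the parametrization by weights is genuinely onto $P(G)^{\circ}$ and the argument is not vacuous when $G$ has biconnected components.
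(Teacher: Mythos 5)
Your proposal is correct and follows essentially the same route as the paper: unwind $p_v(c)\geq 0$ to $\sum_{uv\in E}r_{uv}(c)\leq 2$, i.e.\ $r(c)\in\mathcal{H}_v$, and then use Theorem~\ref{thm: relative resistances and P(G)} to identify the set of realizable relative resistance tuples with $P(G)^{\circ}$. The paper phrases this as a one-line reduction to Theorem~\ref{th: RN and expected degree}; your version just spells out the same translation more explicitly, including the (correct) caveat about $P(G)^{\circ}$ being a relative interior.
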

\begin{proof}
As in the proof of Theorem \ref{th: RN and expected degree}, a graph is RN if and only if it has relative resistances that satisfy $\sum_{uv\in E}r_{uv}\leq 2$ for all $v\in V$, equivalently, if $r\in\mathcal{H}_v$ for all $v$. Since $P(G)^{\circ}$ is the set of all possible relative resistances, this completes the proof; the same proof holds for RP.
\end{proof}
\begin{remark}[computation]\label{rem: computation}
Corollary \ref{cor: RN intersection halfspaces} shows that determining if a graph is RN can be done using linear programming. In practice, this requires some tolerance $\varepsilon$ to turn strict inequalities into non-strict ones. While spanning tree polytopes can have exponentially many (in $\vert E\vert$) defining inequalities, they admit a polynomial-sized extended formulation \cite{aprile_2021_smaller} using which these linear programs can be computed efficiently.
\end{remark}
\begin{question}
What is the computational complexity of the decision problems:
$$
\text{\emph{For a given graph $G$, decide if $G$ is RN/RP/SRN.}}
$$
\end{question}

\section{Trees \& double matchings}\label{sec: trees and double matchings}
We continue with the geometric characterization of RN and RP graphs in Corollary \ref{cor: RN intersection halfspaces}, and show that the intersection of $P(G)$ with the halfspaces $(\mathcal{H}_v)_{v\in V}$ has a combinatorial interpretation in terms of double matchings. Recall that a matching is a (possibly empty) subset of edges $M\subseteq E$ which covers every vertex at most once, and the matching polytope $M(G)$ is the convex hull of indicator vectors of matchings. Edmonds derived the hyperplane description of the matching polytope in \cite{edmonds_1965_matching}; scaled by a factor of two, this becomes:
\begin{equation}\label{eq: hyperplanes of 2M(G)}
2M(G) = \left\{x\in\R^E_{\geq 0} \,\,\bigg\vert\,\,
\begin{array}{l}
\sum_{uv\in E} x_{uv}\leq 2 \quad\quad\quad\quad\quad\text{~for all~} v\in V, \\
\sum_{uv\in E(U)}x_{uv}\leq 2\lfloor\tfrac{1}{2}\vert U\vert\rfloor \quad\text{~for all odd-sized~} U\subseteq V
\end{array} 
\right\}\subset \R^E,
\end{equation}
where $E(U)$ are the edges in the induced subgraph on $U$. Note that the $v$-based inequalities correspond to the expected degree condition. We show that the $U$-based inequalities are implied by the hyperplanes of $P(G)$, which leads to the following characterization of RN graphs:
\begin{theorem}\label{thm: intersecting trees and matchings}
A graph $G$ is RN if and only if $P(G)^{\circ}\cap 2M(G)$ is non-empty.
\end{theorem}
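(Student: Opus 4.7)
The plan is to reduce the claim, via Corollary \ref{cor: RN intersection halfspaces}, to showing that among the three families of inequalities defining $2M(G)$ in Edmonds' description \eqref{eq: hyperplanes of 2M(G)} --- namely nonnegativity, the degree (``$v$-based'') inequalities, and the blossom (``$U$-based'') inequalities for odd $U$ --- only the degree inequalities can possibly be violated on $P(G)^{\circ}$. More precisely, I will show
\[
P(G)^{\circ}\cap 2M(G)\;=\;P(G)^{\circ}\cap\bigcap_{v\in V}\mathcal{H}_v,
\]
so that the theorem follows from Corollary \ref{cor: RN intersection halfspaces}. The forward inclusion is immediate from \eqref{eq: hyperplanes of 2M(G)}, so the entire content lies in the reverse inclusion: every $r\in P(G)^{\circ}$ automatically satisfies $r\geq 0$ and the blossom inequalities.

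First I would dispose of nonnegativity: the vertices $e_T$ of $P(G)$ are $\{0,1\}$-vectors, so $P(G)\subset \R^E_{\geq 0}$ and a fortiori $P(G)^{\circ}\subset \R^E_{\geq 0}$. Next, for the blossom inequalities, I would invoke the hyperplane description of $P(G)$ given by Corollary \ref{cor: r hyperplane inequalities} with the choice $A=E(U)$. The graph $G - (E\setminus E(U))$ retains the full vertex set $V$ but only the edges with both endpoints in $U$; hence every vertex of $V\setminus U$ is isolated in this graph, contributing $|V|-|U|$ components, while the induced subgraph on $U$ contributes at least one further component. Plugging this into Corollary \ref{cor: r hyperplane inequalities} yields
\[
\sum_{e\in E(U)} r_e \;\leq\; |V| - \bigl(|V|-|U|+k_U\bigr) \;=\; |U|-k_U \;\leq\; |U|-1,
\]
where $k_U\geq 1$ is the number of components of the subgraph induced on $U$. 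For odd $|U|$ one has $|U|-1 = 2\lfloor |U|/2\rfloor$, so the blossom inequality in \eqref{eq: hyperplanes of 2M(G)} is satisfied by every $r\in P(G)^{\circ}$, as desired.

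Combining the three observations gives the reverse inclusion, and Corollary \ref{cor: RN intersection halfspaces} then finishes the proof. I do not expect a serious obstacle here, since the argument is essentially a direct comparison of two polyhedral descriptions; the only thing that needs care is bookkeeping the correspondence between $A\subseteq E$ in Corollary \ref{cor: r hyperplane inequalities} and the induced edge sets $E(U)$ in Edmonds' description, and verifying that the (generally stronger) rank bound from the spanning-tree side majorizes the matching-side blossom bound when $|U|$ is odd. One cosmetic point worth flagging in the writeup is that the inequality $\sum_{e\in E(U)}r_e \leq |U|-1$ actually holds for all $U$, odd or even, which is strictly stronger than what \eqref{eq: hyperplanes of 2M(G)} requires; this is what makes the implication go through cleanly.
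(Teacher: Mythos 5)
Your proof is correct and follows essentially the same route as the paper: both directions reduce to Corollary \ref{cor: RN intersection halfspaces}, and the key step is exactly the paper's computation applying Corollary \ref{cor: r hyperplane inequalities} with $A=E(U)$ to show the blossom inequalities of $2M(G)$ are already implied on $P(G)^{\circ}$. The only cosmetic difference is that you package the argument as the set identity $P(G)^{\circ}\cap 2M(G)=P(G)^{\circ}\cap\bigcap_{v}\mathcal{H}_v$, which is a clean way to organize the same content.
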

\begin{proof}
If a point $r\in P(G)^{\circ}$ lies in $2M(G)$, then by the $v$-based inequalities of $2M(G)$ we know that r lies in the intersection of $(\mathcal{H}_v)_{v\in V}$ and thus that $G$ is RN by Corollary \ref{cor: RN intersection halfspaces}.

If $G$ is RN then there is a point $r\in P(G)^{\circ}\subset \R^E_{\geq 0}$ which satisfies the $v$-based inequalities of $2M(G)$. It remains to show that $r$ also satisfies the $U$-based inequalities. Let $U\subseteq V$ have odd size and $G[U]$ be the induced subgraph on $U$ with edges $E(U)$. We have
\begin{align*}
\sum_{e\in E(U)} r_e &\leq \vert V\vert - \textup{\#components of $\big(G-E\setminus E(U)\big)$} \quad\text{(Corollary \ref{cor: r hyperplane inequalities})}
\\[-0.6\baselineskip]
&= \vert U\vert - \text{\#components of $\big(G[U]\big)$} \,\leq\, \vert U\vert - 1 \,=\, 2\lfloor\tfrac{1}{2}\vert U\vert\rfloor.
\end{align*}
This confirms that $r\in 2M(G)$ and completes the proof.
\end{proof}

\begin{remark}\label{rem: space of metrics}
Similar to how $P(G)^{\circ}$ parametrizes all possible edge weights, the set $P(G)^{\circ}\cap2M(G)$ parametrizes all edge weights for which $G$ has nonnegative resistance curvature. Geometrically, thinking of edge weights as inverse edge lengths, this is the ``space of metrics with nonnegative curvature on $G$". The fact that the closure of this set is a polytope is in stark contrast with the complexity of the ``space of metrics with positive/nonnegative curvature on a manifold" as studied in differential geometry; see for instance \cite[Sec. 2]{rosenberg_2006_manifolds}.
\end{remark}

A matching in a graph can have size at most $\lfloor\tfrac{1}{2}\vert V\vert\rfloor$. Such a matching is called \emph{perfect} if the graph is even, and \emph{near-perfect} if the graph is odd. Perfect matchings in graphs and their statistics are very actively studied; they are also called \emph{dimers} \cite{kenyon_2009_lectures}. It follows from $1$-tough that RP graphs have a (near-)perfect matching, but we can strengthen this result to all RN graphs.
\begin{corollary}\label{cor: RN has perfect matching}
Every resistance nonnegative graph has a (near-)perfect matching.
\end{corollary}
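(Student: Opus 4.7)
The plan is to combine Theorem \ref{thm: intersecting trees and matchings} with Foster's theorem and a short averaging argument on a convex decomposition. Starting point: since $G$ is RN, Theorem \ref{thm: intersecting trees and matchings} furnishes a point $r \in P(G)^{\circ} \cap 2M(G)$. From $r \in P(G)^{\circ}$ and Foster's theorem, I get the mass identity $\sum_{e\in E} r_e = |V| - 1$. From $r/2 \in M(G)$, I write $r/2$ as a convex combination of indicator vectors of matchings, $r/2 = \sum_i \lambda_i e_{M_i}$ with $\lambda_i \geq 0$ and $\sum_i \lambda_i = 1$.

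Summing coordinates in this decomposition gives $\sum_i \lambda_i |M_i| = (|V|-1)/2$. Since each $|M_i|$ is a nonnegative integer and the weighted average equals $(|V|-1)/2$, a standard averaging (pigeonhole) argument forces at least one $M_{i^*}$ in the support of the decomposition to satisfy $|M_{i^*}| \geq \lceil (|V|-1)/2 \rceil = \lfloor |V|/2 \rfloor$. Since $\lfloor |V|/2 \rfloor$ is also the absolute upper bound on the size of a matching in $G$, $M_{i^*}$ attains this bound, and is therefore perfect when $|V|$ is even and near-perfect when $|V|$ is odd.

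There is no serious obstacle; the entire argument is a few lines once Theorem \ref{thm: intersecting trees and matchings} is in hand. The only subtlety is the parity step when $|V|$ is even: the expected matching size $(|V|-1)/2$ is then a half-integer strictly below $|V|/2$, so one might naively worry that all $M_i$ could have size at most $|V|/2 - 1$; this is ruled out because the average would then be at most $|V|/2 - 1$, contradicting $(|V|-1)/2 > |V|/2 - 1$. This one-line rounding observation is the crux of the proof.
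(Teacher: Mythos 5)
Your proof is correct and is essentially the same as the paper's: both invoke Theorem \ref{thm: intersecting trees and matchings} to get a point of $2M(G)$ on the Foster hyperplane $\sum_e x_e = |V|-1$, decompose it convexly into double matchings, and apply the averaging/pigeonhole observation to extract a matching of size $\lceil(|V|-1)/2\rceil = \lfloor|V|/2\rfloor$. You spell out the parity step a bit more explicitly than the paper, but there is no difference in substance.
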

\begin{proof}
By Foster's theorem, the spanning tree polytope lies in the hyperplane of points in $\R^E$ whose entries sum to $\vert V\vert -1$. If a graph $G$ is RN, then by Theorem \ref{thm: intersecting trees and matchings} there must be a point in $2M(G)$ that lies in this hyperplane. Since this point is a convex combination of indicator vectors of double matchings, $G$ must have a matching of size at least $\lceil\tfrac{1}{2}(\vert V\vert -1)\rceil = \lfloor\tfrac{1}{2}\vert V\vert\rfloor$.
\end{proof}

We note that Corollary \ref{cor: RN has perfect matching} is useful as a necessary condition to check if a class of graphs could be RN. It can be used for instance to show that imbalanced bipartite graphs are not RN, as in Proposition \ref{prop: RN RP and bipartite}. The converse to Corollary \ref{cor: RN has perfect matching} does not hold. For instance, a 3-cycle glued to a 4-cycle at a single vertex has a near-perfect matching, but it is not RN (see Example \ref{ex: not RN graph}).
\\~\\
Motivated by Theorem \ref{thm: intersecting trees and matchings} and Remark \ref{rem: space of metrics}, we take a better look at the following polytope:
\begin{definition}
The \emph{tree double matching} (TDM) polytope of a graph is $\Theta(G) = P(G)\cap2M(G)$.
\end{definition}
\begin{example}[even cycle graphs]\label{ex: Theta(G) for cycle}
For $n=2k\geq 3$ even, the cycle graph $C_n$ has $n$ spanning trees, $2$ perfect matchings of size $k$ and $n$ matchings of size $k-1$. Each of the spanning trees can be composed by combining a $(k-1)$-sized matching and one of the two $k$-sized matching. The TDM-polytope is the whole spanning tree polytope $\Theta(C_n)=P(C_n)$. 
\end{example}
Integer points of polytopes are important in many areas of combinatorics and its applications; a polytope is called a \emph{lattice polytope} if its vertices are integer points. For the TDM-polytope, we find that the integer points $\Theta(G)\cap \Z^E$ have a simple characterization; recall that a \emph{Hamiltonian path} in a graph is a spanning tree which is also a path.

\begin{proposition}\label{prop: integer points are hamiltonian paths}
The integer points of $\Theta(G)$ are indicator vectors of Hamiltonian paths.
\end{proposition}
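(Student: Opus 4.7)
The plan is to argue the two directions separately, using the integrality of the spanning tree polytope and the halfspace description of $2M(G)$ recalled in \eqref{eq: hyperplanes of 2M(G)}.

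First I would handle the forward direction. Take an integer point $x\in\Theta(G)\cap\Z^E$. Since $x\in P(G)\cap\Z^E$ and the spanning tree polytope is a lattice polytope whose vertices are exactly the indicator vectors $e_T$ of spanning trees (this is a standard matroid polytope fact, cited earlier via \cite{feichtner_2005_matroid}), we conclude $x=e_T$ for some spanning tree $T$. Now invoke the membership $e_T\in 2M(G)$: the $v$-based inequalities $\sum_{uv\in E}x_{uv}\le 2$ say precisely that every vertex has degree at most $2$ in $T$. A spanning tree with maximum degree $2$ is a path covering every vertex, i.e., a Hamiltonian path.

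For the converse, suppose $T$ is a Hamiltonian path of $G$. Clearly $e_T\in P(G)$ since $T$ is a spanning tree, so it remains to show $e_T\in 2M(G)$. The natural idea is to exhibit $e_T$ as twice a convex combination of matchings: if we label the edges of the path $T$ consecutively as $f_1,f_2,\dots,f_{|V|-1}$, then $M_{\text{odd}}=\{f_1,f_3,\dots\}$ and $M_{\text{even}}=\{f_2,f_4,\dots\}$ are matchings of $G$ with $e_T=e_{M_{\text{odd}}}+e_{M_{\text{even}}}$. Hence $\tfrac{1}{2}e_T=\tfrac{1}{2}e_{M_{\text{odd}}}+\tfrac{1}{2}e_{M_{\text{even}}}\in M(G)$, so $e_T\in 2M(G)$ as desired. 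Alternatively, one can simply verify Edmonds' inequalities directly: the $v$-based bound holds because every vertex has degree at most $2$ in the path, and for an odd subset $U\subseteq V$ the induced subpath $T[U]$ has at most $|U|-1$ edges, so $\sum_{uv\in E(U)}(e_T)_{uv}\le |U|-1=2\lfloor\tfrac{1}{2}|U|\rfloor$.

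I do not expect a real obstacle here; the only subtlety is remembering that integer points of $P(G)$ are automatically vertices (so no non-tree integer points need be considered) and that the odd-set inequalities of $2M(G)$ are redundant for indicator vectors of Hamiltonian paths. Both points are transparent, which is why the characterization comes out so cleanly.
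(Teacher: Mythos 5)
Your proof is correct. It shares the paper's skeleton: integer points of a $0/1$-polytope $P(G)$ are its vertices, hence indicator vectors $e_T$ of spanning trees; then characterize which $e_T$ lie in $2M(G)$. The departure is in how you handle that membership. For the forward direction you read off the vertex-degree inequalities $\sum_{uv\in E}(e_T)_{uv}\le 2$ directly from Edmonds' halfspace description \eqref{eq: hyperplanes of 2M(G)}, concluding immediately that $T$ has maximum degree $2$ and hence is a Hamiltonian path. The paper instead argues via the vertex description of $2M(G)$, using Foster's theorem and integrality to constrain what convex combination of double matchings could equal $e_T$, and then passes from that to the degree bound; your route avoids the slightly delicate claim about the exact size and number of matchings in that decomposition, which is not actually needed. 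For the converse you give the explicit alternating decomposition $e_T=e_{M_{\textup{odd}}}+e_{M_{\textup{even}}}$, which the paper leaves implicit. Both arguments are valid; yours is somewhat more explicit and avoids an unnecessary structural claim.
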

\begin{proof}
Since the vertices of $P(G)$ lie in $\{0,1\}^E$ the only integer points in $P(G)$ are its vertices, which are indicator vectors of spanning trees. A vertex $e_T$ of $P(G)$ lies in $\Theta(G)$ if and only if it can be written as a convex combination of double matchings in $G$. By Foster's theorem as in the proof of Corollary \ref{cor: RN has perfect matching} and integrality of the point $e_T$, this point must be a combination of (i) two matchings with $\tfrac{1}{2}(\vert V\vert-1)$ edges if $G$ is odd, or (ii) one matching with $\tfrac{1}{2}\vert V\vert$ edges and another one with $\tfrac{1}{2}\vert V\vert-1$ edges if $G$ is even. Combining these matchings gives a subgraph with degree at most $2$, which is connected if and only if it is a path. This completes the proof.
\end{proof}

We can use this result to give a combinatorial condition for the RN property; a set of Hamiltonian paths $\mathcal{F}$ is called \emph{independent} if the indicator vectors $(e_T)_{T\in\mathcal{F}}$ are affinely independent.
\begin{corollary}
If $G$ contains $(\vert E\vert-$\#biconnected components of $G)$ independent Hamiltonian paths, then $G$ is RN.
\end{corollary}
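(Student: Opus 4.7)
The plan is to apply Theorem~\ref{thm: intersecting trees and matchings}, which reduces showing $G$ is RN to exhibiting a single point in $P(G)^{\circ} \cap 2M(G)$. The candidate will be a strict convex combination of the indicator vectors of the supplied Hamiltonian paths $\mathcal{F}$.

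First I would observe that by Proposition~\ref{prop: integer points are hamiltonian paths} every Hamiltonian path $T$ satisfies $e_T \in \Theta(G) = P(G) \cap 2M(G)$. Since $\Theta(G)$ is convex, any strict convex combination $x = \sum_{T \in \mathcal{F}} \mu_T e_T$ with all $\mu_T > 0$ and $\sum_T \mu_T = 1$ belongs to $\Theta(G)$, and in particular $x \in 2M(G)$. So the only substantive task is to arrange $x \in P(G)^{\circ}$.

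For that I would invoke Proposition~\ref{prop: non-separable trees}: a strict convex combination of a family $(e_T)_{T \in \mathcal{F}}$ of vertices of $P(G)$ lies in $P(G)^{\circ}$ if and only if $\mathcal{F}$ is not contained in any proper face, equivalently if and only if the affine hull of $\{e_T : T \in \mathcal{F}\}$ coincides with that of $P(G)$. Every spanning tree satisfies the local Foster equalities (one per biconnected component), so each Hamiltonian path indicator vector automatically lies in $\operatorname{aff}(P(G))$; the remaining question is whether the given affinely independent HPs affinely span this subspace. By Corollary~\ref{cor: r hyperplane inequalities} (and the standard codimension formula for matroid base polytopes) $\operatorname{aff}(P(G))$ has dimension equal to $|E|$ minus the number of biconnected components of $G$, which is precisely the count supplied by the hypothesis. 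Consequently the centroid of $\mathcal{F}$ lies in $P(G)^{\circ} \cap 2M(G)$, and Theorem~\ref{thm: intersecting trees and matchings} yields that $G$ is RN.

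The main step to check carefully is the dimension bookkeeping: one must verify that the stated number of affinely independent Hamiltonian paths is indeed enough to affinely span $\operatorname{aff}(P(G))$, and that every HP indicator vector lies on this affine subspace (a direct consequence of the local Foster theorem). Once that is pinned down, the remainder is a routine assembly of Theorem~\ref{thm: intersecting trees and matchings}, Proposition~\ref{prop: non-separable trees} and Proposition~\ref{prop: integer points are hamiltonian paths}, with no additional calculation required.
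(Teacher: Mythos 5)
Your argument follows the same route as the paper's: reduce the claim to Theorem \ref{thm: intersecting trees and matchings} via Proposition \ref{prop: integer points are hamiltonian paths}, take a strict convex combination of the given Hamiltonian path vectors, and argue by a dimension count that this point lies in $P(G)^{\circ}$. You explicitly flag the dimension bookkeeping as the crucial step to verify, and on inspection it fails by one --- in your write-up and in the paper's own proof alike. If $k$ is the number of biconnected components, then $\operatorname{aff}(P(G))$ has dimension $|E|-k$, but $|E|-k$ \emph{affinely} independent points span an affine flat of dimension $|E|-k-1$, one short of what is needed. A flat of that dimension can coincide with the affine hull of a facet of $P(G)$, so the centroid of $\mathcal{F}$ need not lie in $P(G)^{\circ}$, and Proposition \ref{prop: non-separable trees} does not apply.

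The $3$-cycle makes this concrete: $|E|=3$, one biconnected component, so the hypothesis asks for $2$ independent Hamiltonian paths, but any two of the three spanning tree vertices of the triangle $P(G)$ span an edge of that triangle, i.e.\ a proper face, whose relative interior misses $P(G)^{\circ}$. The corollary is not actually falsified here --- the $3$-cycle in fact has three affinely independent Hamiltonian paths and is RN --- but the argument as written does not reach the conclusion from the stated hypothesis. The minimal repair is to strengthen the hypothesis to $|E|-k+1$ affinely independent Hamiltonian paths (so that their affine span, being contained in $\operatorname{aff}(P(G))$, must equal it); alternatively, one would need a separate argument, say via Proposition \ref{prop: non-separable trees}(3), showing that $|E|-k$ affinely independent Hamiltonian paths can never all lie in a single proper face. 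Your appeal to Proposition \ref{prop: integer points are hamiltonian paths} and the $2M(G)$ part of the containment is correct; only the $P(G)^{\circ}$ side has the gap.
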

\begin{proof}
The result is true if $G$ is a path graph so we may assume that $G$ is biconnected. If $G$ contains $\vert E\vert -1$ independent Hamiltonian paths, then $\conv(e_T\mid T\text{~is Hamiltonian path})\subseteq \Theta(G)$ has dimension $\vert E\vert-1$. Since this is equal to the dimension of $P(G)$, this convex hull and thus also $\Theta(G)$ must intersect $P(G)^{\circ}$. By Theorem \ref{thm: intersecting trees and matchings} this completes the proof.
\end{proof}

Proposition \ref{prop: integer points are hamiltonian paths} and its proof illustrate how the integer points of the TDM polytope correspond combinatorially to trees that can be constructed as the union of two matchings. A similar combinatorial interpretation holds for rational points of $\Theta(G)$ or, equivalently, integer points of its dilations. We propose the following Ehrhart-type question for TDM polytopes:
\begin{question}
Count the number of integer points $n_k=\#( k\Theta(G)\cap\Z^E)$ for $k\in \N$.
\end{question}

Let us return to Example \ref{ex: Theta(G) for cycle}, where we mentioned that $\Theta(G)$ coincides with $P(G)$ for cycle graphs. It turns out that this is the only class of graphs where this occurs.
\begin{corollary}\label{cor: characterizations of cycle graph}
The following are equivalent for a biconnected graph $G$:
\begin{enumerate}
\item $\Theta(G)=P(G)$;
\item $p(c)>0$ for all weights $0<c<\infty$;
\item $G$ is a cycle or a single edge $G=K_2$.
\end{enumerate}
\end{corollary}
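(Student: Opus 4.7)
The two equivalences $1\Leftrightarrow 3$ and $2\Leftrightarrow 3$ both reduce to the question of whether $G$ has a vertex of degree at least three, so I would prove them separately along these lines.

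For $1\Leftrightarrow 3$: since $\Theta(G)\subseteq P(G)$ and both sets are convex, equality holds iff every vertex $e_T$ of $P(G)$ lies in $\Theta(G)$. The vertices of $P(G)$ are exactly its integer points (indicator vectors of spanning trees), so by Proposition \ref{prop: integer points are hamiltonian paths} the condition becomes: every spanning tree of $G$ is a Hamiltonian path, equivalently, every spanning tree has maximum degree at most two. If some vertex $v$ has $\deg_G(v)\geq 3$, then three edges at $v$ form an acyclic claw and hence extend to a spanning tree with degree at least $3$ at $v$; conversely, if $\deg_G\leq 2$ everywhere, then so is every spanning tree's degree. For biconnected $G$, the condition $\deg_G\leq 2$ forces either $G=K_2$ (when $\vert V\vert=2$) or that $G$ is $2$-regular and connected, i.e., a cycle.

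For $3\Rightarrow 2$: in $K_2$ the unique edge has $r_e=1$, so $p_v=\tfrac{1}{2}>0$ at each endpoint; in $C_n$ with $n\geq 3$, biconnectivity together with Corollary \ref{cor: r hyperplane inequalities} applied to $A=\{e\}$ gives $r_e<1$ at every finite positive weight (the equality case would require $\{e\}$ to be the edge set of a biconnected component, which fails for $n\geq 3$), whence $p_v(c)=1-\tfrac{1}{2}(r_{e_1}+r_{e_2})>0$ at every vertex.

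For the contrapositive $\neg 3\Rightarrow\neg 2$, suppose $G$ is biconnected but neither $K_2$ nor a cycle. I would pick a vertex $v$ with three incident edges $e_1,e_2,e_3$ and set $c_{e_1}=c_{e_2}=c_{e_3}=M$, with $c_e=1$ on all other edges. The plan is to show that $r_{e_i}(c)\to 1$ as $M\to\infty$ by a leading-order analysis of the Kirchhoff polynomial: since $\{e_1,e_2,e_3\}$ is acyclic it extends to at least one spanning tree, so both $Z_G(c)=\sum_T\prod_{t\in T}c_t$ and the numerator $\sum_{T\ni e_i}\prod_{t\in T}c_t$ of $r_{e_i}$ have the same leading term $M^3A$, where $A>0$ counts spanning trees containing all three edges (with remaining weights set to $1$). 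Consequently $r_{e_1}+r_{e_2}+r_{e_3}\to 3$, and for $M$ sufficiently large the sum $\sum_{uv\in E}r_{uv}(c)$ exceeds $2$, giving $p_v(c)<0$.

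\textbf{Main obstacle.} The argument is largely structural once Proposition \ref{prop: integer points are hamiltonian paths} and Corollary \ref{cor: r hyperplane inequalities} are invoked; the only technical point is the leading-order calculation, whose crux is the elementary observation that three edges sharing a common vertex form an acyclic set and thus extend to a spanning tree, ensuring that the leading coefficient $A$ is positive.
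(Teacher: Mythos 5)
Your proof is correct, and it takes a genuinely different route from the paper's for connecting condition~2 to the others. The paper's proof of the corollary is extremely short: it establishes $1\Leftrightarrow 2$ in one line by observing that $\Theta(G)=P(G)$ means $P(G)^\circ\subseteq 2M(G)$, which (using the proof of Theorem~\ref{thm: intersecting trees and matchings}) means every relative resistance vector lies in every halfspace $\mathcal{H}_v$; this is exactly the statement $p(c)\geq 0$ for all weights, which the paper silently upgrades to $p(c)>0$ by relative openness of $P(G)^\circ$ inside the closed halfspaces. It then handles $1\Leftrightarrow 3$ via Proposition~\ref{prop: integer points are hamiltonian paths}, just as you do. You instead prove $2\Leftrightarrow 3$ directly: the implication $3\Rightarrow 2$ by the explicit strict inequality $r_e<1$ from Corollary~\ref{cor: r hyperplane inequalities}, and $\neg 3\Rightarrow\neg 2$ by the limit argument sending the weights at a degree-$\geq 3$ vertex to infinity so that $r_{e_1}+r_{e_2}+r_{e_3}\to 3$. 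Both arguments are valid, and the degree-based characterization you use (all spanning trees have max degree $\leq 2$ iff $\deg_G\leq 2$ everywhere, for biconnected $G$) matches the paper's. What your approach buys is that it makes the strictness in condition~2 completely explicit, and the limit argument gives concrete weight assignments witnessing $p_v(c)<0$ — a constructive fact not visible in the paper's one-line $1\Leftrightarrow 2$ argument. What it costs is the extra leading-order computation, whereas the paper gets $1\Leftrightarrow 2$ essentially for free from the geometric machinery of Section~4. One small remark: in the $\neg 3\Rightarrow\neg 2$ step it is worth noting, as you implicitly do, that a biconnected graph other than $K_2$ or a cycle necessarily has a vertex of degree $\geq 3$ (since biconnectivity forces minimum degree $\geq 2$, and $2$-regular connected is a cycle); this makes the choice of $v$ available.
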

\begin{proof}
Note that $\Theta(G)=P(G)$ is equivalent to $P(G)^{\circ}\cap 2M(G)=P(G)^{\circ}$, which says that every point $r\in P(G)^{\circ}$ lies in the halfspaces $(\mathcal{H}_v)_{v\in V}$. This establishes $1.\Leftrightarrow 2.$ The implication $3.\Rightarrow 1.$ follows from Proposition \ref{prop: integer points are hamiltonian paths}. Conversely, if $\Theta(G)=P(G)$ then all vertices of $P(G)$ must be Hamiltonian paths, so all spanning trees of $G$ are paths. For a biconnected graph, this occurs if and only if $G$ is a cycle or a single edge, which proves $1.\Rightarrow 3.$
\end{proof}

We note that paths with more than $3$ vertices satisfy similar properties: $\Theta(G)=P(G)=e_G$ and $p(c)\geq 0$ for all weights. We propose to study the vertices of general TDM-polytopes:
\begin{question}
What are the vertices of $\Theta(G)$? What is the smallest $k$ such that the $k$-dilated TDM polytope is a lattice polytope, i.e., determine $k_G = \min\{k\in \N\mid k\Theta(G)\text{~is lattice polytope}\}$.
\end{question}

More generally, the points in $\Theta(G)$ are edge marginals of distributions $\mu$ on spanning trees in $G$ and distributions $\lambda$ on matchings in $G$, at the same time. In both cases, these distributions can be taken log-linear $\mu_c$ and $\lambda_{\tilde{c}}$ induced by some weights $c$ and $\tilde{c}$ on the edges. 
\begin{question}
What is the relation between the edge weights $c$ and $\tilde{c}$, for which the distributions $\mu_c$ on spanning trees and $\lambda_{\tilde{c}}$ on matchings have the same edge marginals $x\in\Theta(G)$?
\end{question}
To conclude the section, we revisit Example \ref{ex: diamond graph} and illustrate the new definitions and results.
\begin{example}[diamond graph, continued]\label{ex: diamond graph matchings}
Figure \ref{fig: matching polytope} shows the $8$ matchings of the diamond graph arranged according to the $1$-skeleton of the matching polytope $M(G_{\hdiamond})\subset \R^5$. Since the diamond graph is RP, the intersection $P(G_{\hdiamond})^{\circ}\cap2M(G_{\hdiamond})$ is non-empty. The corresponding TDM polytope $\Theta(G_{\hdiamond})$ is a lattice polytope whose $6$ vertices are indicator vectors of the Hamiltonian paths of $G_{\hdiamond}$. The number of rational points in the TDM polytope can be found from its Ehrhart polynomial; we compute this polynomial using \texttt{Polymake} \cite{gawrilow_2000_polymake} as
$$
\#(k\Theta(G_{\hdiamond})\cap\Z^5) = 1+\frac{9}{4}k + \frac{15}{8}k^2+\frac{3}{4}k^3+\frac{1}{8}k^4 \text{~for $k\in\N$.}
$$
\begin{figure}[h!]
    \centering
    \includegraphics[width=0.36\linewidth]{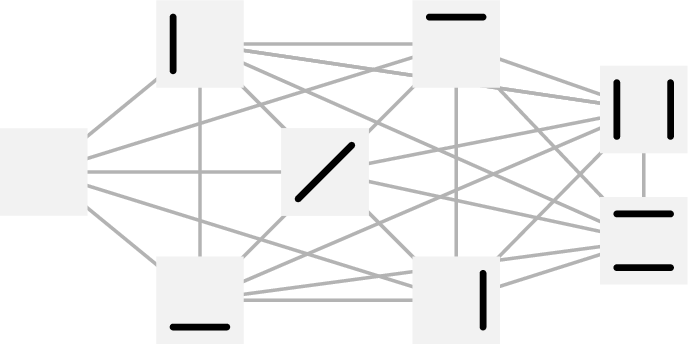}
    \caption{The $1$-skeleton of the matching polytope $M(G_{\hdiamond})$.}
    \label{fig: matching polytope}
\end{figure}
\end{example}

\section{Algebraic characterization}\label{sec: algebraic characterization}
Until here, we have used the relation between effective resistances and spanning trees to characterize RN and RP graphs. However, the effective resistance has a rich algebraic theory independent of the combinatorics of spanning trees and matroids; in this section, we use this theory to derive one further characterization. More precisely, we start from an alternative parametrization for the space of all possible effective resistances in a graph --- a space $\mathcal{K}(G)$ with a similar role to $P(G)^\circ$ --- and derive an analogue of Corollary \ref{cor: RN intersection halfspaces} to characterize RN and RP graphs. Interestingly, the condition $p\geq 0$ again translates to affine inequalities on $\mathcal{K}(G)$. 

We give a minimal treatment of Laplacian and resistance matrices and refer to \cite[Ch. 3]{devriendt_2022_thesis} for more details. Let $G$ be a connected graph with $n=\vert V\vert$ vertices and edge weights $0<c<\infty$. The Laplacian matrix of $G$ is the $n\times n$ symmetric matrix $L$ with entries
$$
L_{uv} = 0 \text{~for $uv\not\in E$},  \quad\quad  
L_{uv}= -c_{uv}\text{~for $uv\in E$},  \quad\quad
L_{vv}= -\sum_{u\in V} L_{uv}\text{~for $v\in V$}.
$$
This matrix is real, symmetric, positive semidefinite and has $\ker(L)=\spn(\mathbf{1})$ , where $\mathbf{1}\in\R^n$ is the vector of all ones. Together with off-diagonal nonpositivity, these four properties characterize Laplacian matrices \cite[Prop. 2.5]{devriendt_2022_thesis}. Let $L^{\dagger} = (L+\tfrac{1}{n}\mathbf{1}\mathbf{1}^T)^{-1}-\tfrac{1}{n}\mathbf{11}^T$ be the Moore--Penrose pseudoinverse of $L$, then the effective resistance between two vertices is defined as
\begin{equation}\label{eq: definition effective resistances pseudoinverse}
\omega_{uv} = L^{\dagger}_{uu} + L^{\dagger}_{vv} - 2L^{\dagger}_{uv}, \quad\quad\text{~for $u,v\in V$}.
\end{equation}
For adjacent vertices, this agrees with definition \eqref{eq: definition effective resistance} in the introduction and we recall that $r_{uv}=\omega_{uv}c_{uv}$ for $uv\in E$. The \emph{resistance matrix} of $G$ is the $n\times n$ matrix $\Omega=(\omega_{uv})_{u,v\in V}$. It is a real, symmetric, invertible matrix with zero diagonal \cite[Prop. 3.2 \& 3.5]{devriendt_2022_thesis}. We will further work with the inverse resistance matrix $K=\Omega^{-1}$ and consider a special class of edge weights:
\begin{definition}
The edge weights $0<c<\infty$ are called \emph{normalized} if $\sum_{u,v\in V}K_{uv}=1$.
\end{definition}
We call a graph with normalized weights a \emph{normalized graph}. Assuming normalized edge weights is not restrictive, since any choice of edge weights can be normalized by the common rescaling $c_e\mapsto c_e / (\sum_{u,v\in V}K_{uv})$ for all $e\in E$. The resistance curvature of a normalized graph has a very simple expression in terms of $K$.
\begin{proposition}[{\cite[Cor. 3.10]{devriendt_2022_thesis}}]\label{prop: p in terms of K}
Let $G$ be a normalized graph, then $p_v(c) = \sum_{u\in V}K_{uv}$.
\end{proposition}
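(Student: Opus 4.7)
The plan is to express $p_v(c)$ as a matrix-level quantity involving $L$ and $\Omega$, then use a classical identity that relates $\Omega^{-1}$ to $L$ to convert this into an expression in $K$.

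First, I would rewrite the defining sum in terms of matrix entries. Since $L_{uv} = -c_{uv}$ for $uv\in E$, $L_{uv}=0$ for non-edges, and $\omega_{vv}=0$, the sum $\sum_{uv\in E}r_{uv}(c) = \sum_{uv\in E} c_{uv}\omega_{uv}$ can be extended to all $u\in V$ without altering its value, giving $\sum_{uv\in E}r_{uv}(c) = -\sum_{u\in V}L_{uv}\omega_{uv} = -(L\Omega)_{vv}$. Hence
$$
p_v(c) \,=\, 1 + \tfrac{1}{2}(L\Omega)_{vv}.
$$
It therefore suffices to prove the matrix identity $L\Omega = \frac{2}{\sigma}\tau\mathbf{1}^T - 2I$, where $\tau = K\mathbf{1}$ and $\sigma = \mathbf{1}^T K\mathbf{1}$; for a normalized graph this evaluates to $(L\Omega)_{vv} = 2\tau_v - 2$, and substituting back gives $p_v(c)=\tau_v=\sum_u K_{uv}$.

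Second, I would establish the identity for $L\Omega$. From the pseudoinverse formula \eqref{eq: definition effective resistances pseudoinverse}, $\Omega = d\mathbf{1}^T + \mathbf{1}d^T - 2L^\dagger$ with $d = \mathrm{diag}(L^\dagger)$. Using $L\mathbf{1}=0$ and $LL^\dagger = I - \frac{1}{n}J$ gives the preliminary form
$$
L\Omega \,=\, Ld\,\mathbf{1}^T - 2I + \tfrac{2}{n}J,
$$
so the problem reduces to showing $Ld = \frac{2}{\sigma}\tau - \frac{2}{n}\mathbf{1}$. Left-multiplying the expression for $\Omega$ by $K$ and then right-multiplying by $L$ (using $\mathbf{1}^T L = 0$ and $L^\dagger L = I - \frac{1}{n}J$) yields
$$
L + 2K \,=\, \tau\bigl(Ld + \tfrac{2}{n}\mathbf{1}\bigr)^T.
$$
The left-hand side is symmetric while the right-hand side is a rank-one (or zero) matrix; since $K$ is invertible we have $\tau\neq 0$, so symmetry forces $Ld+\tfrac{2}{n}\mathbf{1}=\alpha\tau$ for some scalar $\alpha$. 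Left-multiplying by $\mathbf{1}^T$ and using $\mathbf{1}^T L = 0$ and $\mathbf{1}^T\tau = \sigma$ pins down $\alpha = 2/\sigma$. Substituting back into the preliminary form for $L\Omega$ collapses the $\frac{2}{n}J$ terms and yields the desired identity.

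The main obstacle is the matrix identity, specifically the step that determines $Ld$. The naive reduction $L\Omega H = -2H$ (equivalent to $-\tfrac{1}{2}H\Omega H = L^\dagger$) fixes $L\Omega$ only on $\mathbf{1}^\perp$, leaving a rank-one ambiguity along $\mathbf{1}$. The key trick is to exploit the symmetry of $K = \Omega^{-1}$ to pin down this rank-one correction; once that is done, the remainder of the argument is routine algebra and a diagonal evaluation.
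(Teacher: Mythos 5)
Your proof is correct; I checked each step. In particular, the reduction $p_v(c) = 1 + \tfrac{1}{2}(L\Omega)_{vv}$ is exact, the preliminary form $L\Omega = Ld\,\mathbf{1}^T - 2I + \tfrac{2}{n}J$ follows directly from $L\mathbf{1}=0$ and $LL^\dagger = I - \tfrac{1}{n}J$, and the computation of $K\Omega L = L$ in two ways does give $L + 2K = \tau\bigl(Ld + \tfrac{2}{n}\mathbf{1}\bigr)^T$. Since $\mathbf{1}^T\bigl(Ld + \tfrac{2}{n}\mathbf{1}\bigr) = 2 \neq 0$ the right-hand side is genuinely rank one, and symmetry of $L+2K$ together with $\tau = K\mathbf{1}\neq 0$ forces $Ld + \tfrac{2}{n}\mathbf{1}$ to be parallel to $\tau$; contracting with $\mathbf{1}^T$ fixes the scalar to $2/\sigma$. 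The $J$ terms then cancel to give $L\Omega = \tfrac{2}{\sigma}\tau\mathbf{1}^T - 2I$, and the diagonal evaluation with $\sigma=1$ finishes it.

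The paper only cites \cite[Cor.~3.10]{devriendt_2022_thesis} and does not reprove this, but its natural route would start from the stated identity \eqref{eq: L and K}, $L = -2K + 2\tau\tau^T$, which the paper records without derivation (``it can be checked''). From that identity the result is immediate: $L\Omega = -2I + 2\tau(\Omega\tau)^T = -2I + 2\tau\mathbf{1}^T$ using $\Omega\tau = \Omega K\mathbf{1} = \mathbf{1}$, and then read off the diagonal. Your proof instead works directly from the pseudoinverse formula for $\Omega$, and the symmetry-of-$K$ trick that pins down the otherwise ambiguous rank-one piece $Ld\,\mathbf{1}^T$ is, in effect, a proof of \eqref{eq: L and K} (your equation $L+2K=\tau\bigl(Ld+\tfrac{2}{n}\mathbf{1}\bigr)^T$ becomes $L+2K=2\tau\tau^T$ once $\alpha=2/\sigma$ is substituted). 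So what you gain is a self-contained argument that does not take \eqref{eq: L and K} as given; what the paper's route buys is brevity by front-loading that identity. Both are sound, and yours is arguably the more instructive of the two.
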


We now rephrase \cite[Thm. 3.9]{devriendt_2022_thesis} and characterize all possible inverse resistance matrices associated to a given graph as a semialgebraic set inside $\GLnsym(\R)$, the set of real symmetric invertible $n\times n$ matrices. We think of this as an alternative parametrization to $P(G)^{\circ}$.
\begin{theorem}[\cite{devriendt_2022_thesis}]
The set of inverse resistance matrices of $G$ with normalized weights is
$$
\mathcal{K}(G) = \left\{ K\in\GLnsym(\R) \,\Biggl\vert\,
\begin{array}{l}
\sum_{u,v\in V}K_{uv}=1\\
K_{uv} = \big(\sum_{w\in V}K_{wv}\big)\big(\sum_{w\in V}K_{uw}\big)\quad \text{~for all $uv\not\in E$}\\
K_{uv} > \big(\sum_{w\in V}K_{wv}\big)\big(\sum_{w\in V}K_{uw}\big)\quad \text{~for all $uv\in E$}
\end{array}
\right\}.
$$
\end{theorem}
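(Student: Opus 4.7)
The plan is to reduce the characterization to the matrix identity
\[
K \,=\, -\tfrac{1}{2}L + pp^T, \quad p = K\mathbf{1}, \quad \mathbf{1}^T p = 1,
\]
relating the Laplacian $L$, inverse resistance matrix $K$, and resistance curvature vector $p$ of any normalized weighted graph. Once this identity is in hand, the three defining conditions of $\mathcal{K}(G)$ are read off entrywise, since $L_{uv} = 0$ precisely when $uv \notin E$ (with $u \neq v$).

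To derive the identity, I would start from the standard formula $\Omega = d\mathbf{1}^T + \mathbf{1}d^T - 2L^{\dagger}$ with $d = \diag(L^{\dagger})$, use the pseudoinverse identity $L L^{\dagger} = I - \tfrac{1}{n}\mathbf{1}\mathbf{1}^T$, and compute
\[
L\Omega \,=\, (Ld)\mathbf{1}^T - 2I + \tfrac{2}{n}\mathbf{1}\mathbf{1}^T \,=\, -2I + v\mathbf{1}^T, \quad v := Ld + \tfrac{2}{n}\mathbf{1}.
\]
Right-multiplying by $K = \Omega^{-1}$ yields $L = -2K + v p^T$; symmetry of $L$ forces $v p^T = p v^T$, so $v = \lambda p$ for some scalar $\lambda$. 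Taking the $\mathbf{1}$-inner product gives $\lambda\,\mathbf{1}^T p = \mathbf{1}^T v = 2$, and the normalization $\mathbf{1}^T K \mathbf{1} = 1$ fixes $\lambda = 2$.

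For the forward inclusion, the identity gives $K_{uv} = -\tfrac{1}{2}L_{uv} + p_u p_v$, from which $K_{uv} = p_u p_v$ on non-edges ($L_{uv} = 0$), $K_{uv} > p_u p_v$ on edges ($L_{uv} = -c_{uv} < 0$), and $\mathbf{1}^T K \mathbf{1} = 1$ by normalization. For the reverse inclusion, given $K \in \GLnsym(\R)$ satisfying the three conditions, set $p := K\mathbf{1}$ and $L := -2K + 2pp^T$. The sum condition gives $\mathbf{1}^T p = 1$, whence $L\mathbf{1} = -2p + 2p(\mathbf{1}^T p) = 0$; the off-diagonal conditions give $L_{uv} = 0$ on non-edges and $L_{uv} < 0$ on edges; symmetry is inherited from $K$ and $pp^T$. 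Thus $L$ is the Laplacian of $G$ with positive weights $c_{uv} = -L_{uv}$, and by connectedness of $G$, $\ker(L) = \spn(\mathbf{1})$, so $\Omega_L$ and $\Omega_L^{-1}$ are well-defined.

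The main obstacle is closing the loop of the reverse direction: one must confirm that $\Omega_L^{-1} = K$, i.e., that the $p$-vector encoded in $K$ coincides with $\Omega_L^{-1}\mathbf{1}$, rather than with some other $p'$ for which $-L/2 + p'(p')^T$ also satisfies the three conditions. I would handle this via the uniqueness clause of Theorem \ref{thm: relative resistances and P(G)}: normalized edge weights on $G$ correspond bijectively to their induced inverse resistance data, so composing the algebraic reconstruction $K \mapsto L \mapsto c$ with the forward map $c \mapsto \Omega_c^{-1}$ must yield back $K$. Combining this bijectivity with the forward identity applied to the reconstructed Laplacian pins down $\Omega_L^{-1} = K$ and completes the proof.
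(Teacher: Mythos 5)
Your derivation of the identity $L = -2K + 2(K\mathbf{1})(K\mathbf{1})^T$ via the computation $L\Omega = -2I + v\mathbf{1}^T$ and the symmetry-plus-normalization argument is correct and is a genuinely different route from the paper, which simply cites \cite[Cor.~3.8]{devriendt_2022_thesis}; that corollary rests on Fiedler's block inverse formula for $\begin{pmatrix}0 & \mathbf{1}^T \\ \mathbf{1} & \Omega\end{pmatrix}$. Your forward inclusion and the construction and verification of $L$ as a Laplacian in the reverse direction are both fine, and mirror the paper's.

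The gap is exactly where you flag it, and your proposed fix does not close it. Theorem~\ref{thm: relative resistances and P(G)} is about the bijection between normalized edge weights (up to rescaling on biconnected components) and relative resistances $r \in P(G)^{\circ}$; it says nothing about the map $c \mapsto \Omega_c^{-1}$, and invoking it here is a category error. More to the point, your sentence ``composing $K \mapsto L \mapsto c$ with $c \mapsto \Omega_c^{-1}$ must yield back $K$'' is circular: it presupposes that $K$ lies in the image of $c \mapsto \Omega_c^{-1}$, which is precisely what the reverse inclusion has to establish. The substantive difficulty is that the defining conditions of $\mathcal{K}(G)$ constrain only the \emph{off-diagonal} entries of $K - pp^T$ (via $L_{uv}$ for $u \neq v$), so $L$ alone does not determine $p$, and several vectors $p'$ with $\mathbf{1}^Tp' = 1$ can give the same $L$ but different $K$; one must show that the specific $p$ produced by $\Omega_L^{-1}$ is the one hidden in $K$. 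This requires the content of \cite[Cor.~3.8]{devriendt_2022_thesis} (or an equivalent direct argument from the block inverse structure, which also brings in the vanishing-diagonal property of $\Omega = K^{-1}$), not a uniqueness appeal to a theorem about a different parametrization.
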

\begin{proof}
We summarize the proof of \cite[Thm. 3.9]{devriendt_2022_thesis}. Given $K\in\mathcal{K}(G)$, we construct the matrix
\begin{equation}\label{eq: L and K}
L = -2K + 2(K\mathbf{1})(K\mathbf{1})^T.
\end{equation}
It can be checked that this matrix is real, symmetric, positive semidefinite with $\ker(L)=\operatorname{span}(\mathbf{1})$ and that it has nonpositive off-diagonal entries: $L_{uv}<0$ if $uv\in E$ and zeros otherwise. Thus, $L$ is a Laplacian matrix of $G$ with weights $c^*_{uv}=-L_{uv}$, and $K$ is the inverse resistance matrix of this graph by \cite[Cor. 3.8]{devriendt_2022_thesis}. Here, $K\in\mathcal{K}(G)$ confirms that the weights $c^*$ are normalized.

Conversely, it can be checked that every inverse resistance matrix of $G$ lies in the set $\mathcal{K}(G)$; this follows for instance by relation \eqref{eq: L and K} between the Laplacian and inverse resistance matrix of a normalized graph, and then using the properties of Laplacian matrices. 
\end{proof}

We now formulate an analogue of Corollary \ref{cor: RN intersection halfspaces} based on the parametrization $\mathcal{K}(G)$. As before, we define (open) halfspaces to keep track of signs of the resistance curvature
$$
\widetilde{\mathcal{H}}_v = \Big\{K\in\GLnsym(\R) \mid \sum_{u\in V}K_{uv} \geq 0\Big\}, \quad \widetilde{\mathcal{H}}_v^{\circ} = \Big\{K\in\GLnsym(\R) \mid \sum_{u\in V}K_{uv}>0\Big\}\quad\text{~for $v\in V$}.
$$
\begin{corollary}\label{cor: RN from inverse resistance matrices}
A graph $G$ is RN (resp. RP) if and only if $\mathcal{K}(G)$ and the halfspaces $\widetilde{\mathcal{H}}_v$ (resp. open halfspaces $\widetilde{\mathcal{H}}_v^{\circ}$) for all $v\in V$ have a common intersection point.
\end{corollary}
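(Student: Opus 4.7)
The plan is to reduce the corollary to a direct application of Proposition \ref{prop: p in terms of K} combined with the theorem characterizing $\mathcal{K}(G)$. The key preliminary observation is that resistance curvature is invariant under common rescaling of edge weights. Indeed, from the definition $r_e(c) = \sum_{T\ni e}\prod_t c_t / \sum_T\prod_t c_t$, rescaling $c_e\mapsto\alpha c_e$ multiplies numerator and denominator by the same factor $\alpha^{|V|-1}$, so relative resistances, and hence $p_v(c)$, are preserved. Therefore, without loss of generality, any witness to the RN (resp. RP) property can be taken to have normalized weights (since normalization, as defined in the excerpt, is itself a common rescaling).

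With this reduction in hand, I would proceed as follows. Suppose first that $G$ is RN. Pick weights $c$ with $p_v(c)\geq 0$ for all $v$ and replace them by their normalization, which preserves each $p_v$; call the result $c^{*}$. Let $K$ be the inverse resistance matrix of $G$ with weights $c^{*}$. By construction $K\in\mathcal{K}(G)$, and by Proposition \ref{prop: p in terms of K} we have $\sum_{u\in V}K_{uv}=p_v(c^{*})\geq 0$ for every $v$, so $K\in\widetilde{\mathcal{H}}_v$ for all $v\in V$. This gives a common intersection point of $\mathcal{K}(G)$ and the halfspaces.

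Conversely, suppose $K\in\mathcal{K}(G)\cap\bigcap_{v\in V}\widetilde{\mathcal{H}}_v$. By the theorem preceding the corollary, such a $K$ is the inverse resistance matrix of $G$ with some normalized weights $c^{*}$, obtained explicitly from the formula $L=-2K+2(K\mathbf{1})(K\mathbf{1})^T$. Applying Proposition \ref{prop: p in terms of K} in the other direction gives $p_v(c^{*})=\sum_{u\in V}K_{uv}\geq 0$ for all $v$, witnessing that $G$ is RN. The RP case is identical upon replacing every ``$\geq$'' with ``$>$'' and $\widetilde{\mathcal{H}}_v$ with $\widetilde{\mathcal{H}}_v^{\circ}$.

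There is no real obstacle in this argument: the content has already been loaded into Proposition \ref{prop: p in terms of K} and the parametrization theorem for $\mathcal{K}(G)$. The only mildly subtle point to flag carefully is that the normalization step is lossless, i.e., that one may pass freely between arbitrary positive weights and normalized weights without altering the sign pattern of $p$; this is exactly the common-rescaling invariance noted above.
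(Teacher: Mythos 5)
Your argument is correct and is essentially the paper's own proof, just with the ``WLOG normalize'' step unpacked explicitly: the paper assumes normalized weights without loss of generality and then reads off the result from Proposition~\ref{prop: p in terms of K}, which is exactly what you do. Your observation that $p$ is invariant under common rescaling (because every spanning tree has $|V|-1$ edges, so $r_e$ is scale-invariant) is the right justification for that step.
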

\begin{proof}
We may assume that all weights are normalized, without loss of generality. The corollary then immediately follows from the expression of $p$ in terms of $K$ in Proposition \ref{prop: p in terms of K}.
\end{proof}
\begin{example}
The Laplacian, resistance and inverse resistance matrix of $G_{\hdiamond}$ with $c=1$  are
$$
L = \begin{pmatrix}
2 & -1 & 0 & -1\\
-1 & 3 & -1 & -1\\
0 & -1 & 2 & -1\\
-1 & -1 & -1 & 3
\end{pmatrix}\quad\quad\Omega = \frac{1}{8}\begin{pmatrix}
0 & 5 & 8 & 5\\
5 & 0 & 5 & 4\\
8 & 5 & 0 & 5\\
5 & 4 & 5& 0
\end{pmatrix}\quad\quad K = \frac{1}{1.36}\begin{pmatrix}
-1 & 0.8 & 0.36 & 0.8\\
0.8 & -2 & 0.8 & 0.72\\
0.36 & 0.8 & -1 & 0.8\\
0.8 & 0.72 & 0.8 & -1
\end{pmatrix}
$$
\end{example}

\section{Graph operations}\label{sec: graph operations}
We consider three graph operations and their effect on resistance nonnegativity and positivity. Figure \ref{fig: graph operations} at the end of the section illustrates the three operations.
\subsection{Subgraphs}
Since adding edges to a graph increases its set of spanning trees, we obtain the following:
\begin{lemma}\label{lem: adding edges}
Let $H\subseteq G$ with $V(H)=V(G)$ and $H$ connected. If $H$ is RP then $G$ is RP.
\end{lemma}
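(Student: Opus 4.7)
The strategy is to leverage the distributional characterization in Theorem \ref{th: RN and expected degree} and then do a small perturbation to move from a distribution on $\mathcal{T}(H)$ to a positive distribution on $\mathcal{T}(G)$. Since $V(H) = V(G)$ and $H$ is connected, every spanning tree of $H$ is also a spanning tree of $G$, so $\mathcal{T}(H) \subseteq \mathcal{T}(G)$.

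First, I would invoke Theorem \ref{th: RN and expected degree} on $H$: there exists a positive distribution $\mu_H$ on $\mathcal{T}(H)$ for which the expected degree of every vertex $v \in V(H)$ is strictly less than $2$. Extend $\mu_H$ trivially to a distribution $\tilde{\mu}_H$ on $\mathcal{T}(G)$ by assigning probability $0$ to every spanning tree of $G$ that uses at least one edge from $E(G) \setminus E(H)$. The edge marginals of $\tilde{\mu}_H$ coincide with those of $\mu_H$ on $E(H)$ and are zero on the new edges, so the expected degree of every vertex is unchanged and remains strictly less than $2$.

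Next, pick any positive distribution $\nu$ on $\mathcal{T}(G)$, for instance the uniform distribution, and form the convex combination
$$
\mu_\varepsilon \,=\, (1-\varepsilon)\tilde{\mu}_H + \varepsilon\, \nu \qquad\text{for } \varepsilon\in(0,1).
$$
For any $\varepsilon>0$ this is a \emph{positive} distribution on $\mathcal{T}(G)$, since $\nu$ assigns positive probability to every spanning tree. The expected degree of a vertex $v$ under $\mu_\varepsilon$ is $(1-\varepsilon)d_v^{H} + \varepsilon d_v^{G}$, where $d_v^H < 2$ and $d_v^G$ is some finite number. By continuity, one can choose $\varepsilon$ small enough so that this quantity remains strictly less than $2$ simultaneously for all (finitely many) vertices $v \in V(G)$. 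Applying the converse direction of Theorem \ref{th: RN and expected degree} yields that $G$ is RP.

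Geometrically, the same argument reads: $P(H)$ embeds into $P(G)$ as the face $\{x \in P(G) : x_e = 0 \text{ for } e \in E(G)\setminus E(H)\}$, and RP of $H$ provides a point $x_H \in P(H)^{\circ} \cap \bigcap_{v} \mathcal{H}_v^{\circ}$. Mixing with any $y \in P(G)^{\circ}$ slides $x_H$ into $P(G)^{\circ}$, and since the halfspaces $\mathcal{H}_v^{\circ}$ are open and $x_H$ lies strictly inside them, the perturbed point remains in their intersection for small enough mixing weight, giving RP by Corollary \ref{cor: RN intersection halfspaces}. The only subtlety worth checking carefully is that the perturbation preserves strict inequalities at every vertex simultaneously, which holds because $V(G)$ is finite.
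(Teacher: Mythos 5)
Your proof is correct and follows essentially the same strategy as the paper's: take a positive distribution on $\mathcal{T}(H)$ witnessing RP, perturb it by $\varepsilon$ of a positive distribution on $\mathcal{T}(G)$ to obtain a positive distribution on the larger tree set, and use finiteness of $V$ plus the strict inequalities to keep all expected degrees below $2$ for small $\varepsilon$. The only cosmetic difference is that the paper spreads the $\varepsilon$ mass uniformly over $\mathcal{T}(G)\setminus\mathcal{T}(H)$ rather than over all of $\mathcal{T}(G)$, which is immaterial.
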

\begin{proof}
Let $H$ be a connected RP subgraph of $G$ on the same vertex set. The spanning tree sets of these two graphs satisfy $\mathcal{T}(H)\subseteq\mathcal{T}(G)$. Since $H$ is RP, there exists a positive distribution $\mu$ on $\mathcal{T}(H)$ whose edge marginals $r\in P(H)^{\circ}$ satisfy $\sum_{uv\in E(H)}r_{uv}< 2$ for all $v\in V$. We then define a distribution $\mu^*$ on $\mathcal{T}(G)$ as
$$
\mu^*:T\mapsto\begin{cases}
(1-\varepsilon)\cdot\mu(T)\quad\quad\quad\quad\,\,\,\,\text{~if $T\in\mathcal{T}(H)$}\\
\varepsilon\cdot \big(\vert\mathcal{T}(G)\setminus\mathcal{T}(H)\vert\big)^{-1}\quad\text{~if $T\not\in\mathcal{T}(H)$},
\end{cases}\quad\quad\text{~for $T\in\mathcal{T}(G)$.}
$$
This is a positive distribution on $\mathcal{T}(G)$ with edge marginals $r^*\in P(G)^{\circ}$. For sufficiently small $\varepsilon>0$ these marginals satisfy $\sum_{uv\in E(G)}r^*_{uv}< 2$ for all $v$, by linearity of $r,r^*$ in terms of $\mu,\mu^*$. 
\end{proof}

A graph $G$ is \emph{Hamiltonian} if it contains a $\vert V(G)\vert$-length cycle graph as a subgraph.
\begin{theorem}\label{thm: Hamiltonian implies RP}
Hamiltonian graphs are resistance positive, but the converse is not always true.
\end{theorem}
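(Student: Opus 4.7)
The plan is a direct two-step argument that uses only facts already established in the excerpt. For the forward direction, the idea is to combine the vertex-transitive example with the subgraph lemma. Any Hamiltonian graph $G$ contains a spanning subgraph isomorphic to the cycle $C_n$ on $n=|V(G)|$ vertices. Cycles are vertex-transitive, and the example preceding Proposition \ref{prop: RN RP and bipartite} shows that every vertex-transitive graph is RP (with constant edge weights $c=1$, Foster plus edge-transitivity give $p_v(1)=|V|^{-1}>0$). Since $C_n \subseteq G$ with $V(C_n)=V(G)$ and $C_n$ is connected, Lemma \ref{lem: adding edges} transfers the RP property from $C_n$ to $G$. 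This handles the first half of the statement.

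For the converse, the plan is to exhibit a single RP graph that is not Hamiltonian. The Petersen graph is the natural candidate: it was already pointed out in the vertex-transitive example to be RP, and it is classically known to be non-Hamiltonian. So one simply cites the Petersen graph as a counterexample demonstrating that RP does not imply Hamiltonian.

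There is no substantive obstacle: both halves reduce to invocations of prior results (the vertex-transitive example and Lemma \ref{lem: adding edges}) together with a well-known example. The only thing to double-check is the hypothesis of Lemma \ref{lem: adding edges} — namely that the Hamiltonian cycle is connected and shares the vertex set of $G$ — which is immediate from the definition of Hamiltonicity. I would write the proof in two short paragraphs of two or three sentences each, with an explicit mention that the Petersen graph is vertex-transitive (hence RP) and non-Hamiltonian, citing a standard reference for the latter if desired.
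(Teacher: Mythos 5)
Your proposal is correct and matches the paper's proof essentially verbatim: both halves use exactly the same ingredients, namely Lemma \ref{lem: adding edges} applied to the spanning Hamiltonian cycle (which is RP because cycles are vertex-transitive) and the Petersen graph as the vertex-transitive, non-Hamiltonian counterexample. No differences worth noting.
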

\begin{proof}
If $G$ is Hamiltonian, then it contains a cycle graph $H\subseteq G$ as a subgraph on the same vertex set. Since $H$ is connected and RP, $G$ is also RP by Lemma \ref{lem: adding edges}. The Petersen graph is RP (since it is vertex-transitive) but not Hamiltonian, so RP does not imply Hamiltonicity.
\end{proof}

The study of Hamiltonicity is closely related to graph toughness. Recall that a graph is $t$-tough if for every set of vertices $U\subset V$ whose removal disconnects the graph, the graph $G-U$ has at most $\vert U\vert/t$ components; cliques are $\infty$-tough. Chv\'{a}tal conjectured the following:
\begin{conjecture}[{\cite[Conj. 2.3]{chvatal_1973_tough}}]\label{conj: chvatal conjecture} There exists $t_0$ such that every $t_0$-tough graph is Hamiltonian.
\end{conjecture}
We refer to the survey \cite{bauer_2006_toughness} for history and progress on this conjecture. Combining Theorem \ref{thm: RP implies 1-tough} and Theorem \ref{thm: Hamiltonian implies RP}, we find that RP graphs lie between two graph classes: \emph{Hamiltonian $\subset$ RP $\subseteq$ 1-tough}. Following this relation, we propose a relaxation of Chv\'{a}tal's conjecture:
\begin{conjecture}\label{conj: toughness conjecture}
There exists $t_{RP}$ such that every $t_{RP}$-tough graph is resistance positive.
\end{conjecture}
\begin{proposition}
Conjecture \ref{conj: chvatal conjecture} implies Conjecture \ref{conj: toughness conjecture}.
\end{proposition}
\begin{proof}
If Conjecture \ref{conj: chvatal conjecture} holds for $t_0$, then $t_0$-tough implies Hamiltonicity which implies RP.
\end{proof}
The latter conjecture was also posed for $t_{RP}=1$ by Fiedler in \cite[p59]{fiedler_2011_matrices}. We ask the following:
\begin{question}
Does $1$-tough imply $RP$? Is every non-Hamiltonian $1$-tough graph RP?
\end{question}
~\\
As a further application we consider grid graphs. The cartesian product $G\times H$ of two graphs has vertex set $V(G)\times V(H)$ and edge set $\{(u,x)(v,y)\mid uv\in E(G)\text{~or~}xy\in E(H)\}$, where ``or" is exclusive. A \emph{grid graph} is the cartesian product of two path graphs $P_n\times P_m$. In \cite{dawkins_2024_resistance} it was shown that for constant edge weights $c=1$ and $m,n\geq 3$, all grid graphs have negative resistance curvature on interior vertices and nonnegative resistance curvature on boundary vertices and if $m>3$ or $n>3$, then the boundary vertices have positive curvature. Here we find that some of these grid graphs are RP. This illustrates the relevance of considering edge weights.
\begin{proposition}
For $m\cdot n$ even and $m,n>1$, the grid graph $P_n\times P_m$ is RP.
\end{proposition}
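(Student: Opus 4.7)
The plan is to reduce the claim to Theorem \ref{thm: Hamiltonian implies RP} by exhibiting a Hamiltonian cycle in $P_n \times P_m$ whenever $mn$ is even and $m,n > 1$. Since $P_n \times P_m \cong P_m \times P_n$, I may assume without loss of generality that $m$ is even. The idea is then entirely classical: construct an explicit Hamiltonian cycle by a ``boustrophedon'' (snake) traversal of the grid.

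More concretely, label the vertices as pairs $(i,j)$ with $1\leq i\leq n$ and $1\leq j\leq m$. Because $m$ is even, I can snake along columns $j=2,3,\dots,m$ while reserving the top row $j=1$ as the ``return path''. Starting at $(1,1)$, first descend column $1$ to $(1,m)$, then move right to $(2,m)$; next alternate between going up to $(2,2)$ and down to $(3,m)$, right to $(4,m)$, up to $(4,2)$, etc., zig-zagging between the levels $j=2$ and $j=m$ through every intermediate vertex of each column $i\geq 2$. When this snake finishes at $(n,2)$ (this is where being $m$ even is needed so that the zig-zag terminates on row $j=2$ rather than $j=m$), move up one step to $(n,1)$ and then traverse the top row leftward back to $(1,1)$. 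This visits each of the $nm$ vertices exactly once and uses only grid edges, so it is a Hamiltonian cycle. (The construction must be split into cases $n=2$ versus $n\geq 3$ only for the cosmetic reason that the snake is degenerate when there is just one interior column, but one checks directly that $P_2\times P_m$ for $m$ even is a single $2m$-cycle.)

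With the Hamiltonian cycle in hand, Theorem \ref{thm: Hamiltonian implies RP} immediately gives that $P_n\times P_m$ is RP, completing the proof. The only subtlety is making sure the snake closes up correctly, which is exactly the step that forces the parity hypothesis $mn$ even; indeed, since $P_n\times P_m$ is bipartite with colour classes of sizes $\lceil mn/2\rceil$ and $\lfloor mn/2\rfloor$, balanced colour classes (and hence the possibility of a Hamiltonian cycle) require $mn$ even, showing that the hypothesis is also necessary. There is no genuine obstacle here beyond carefully writing down the snake.
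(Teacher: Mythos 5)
Your approach is the same as the paper's: show that $P_n\times P_m$ is Hamiltonian for these parameters and invoke Theorem~\ref{thm: Hamiltonian implies RP}; the paper simply asserts the Hamiltonicity, while you supply an explicit boustrophedon cycle.

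There is, however, a parity slip in the explicit construction. In the snake as you describe it---traverse column $i=1$ fully, then zig-zag between rows $j=2$ and $j=m$ through columns $i=2,\dots,n$, then return along row $j=1$---it is the parity of $n$, not $m$, that determines whether the zig-zag terminates at $(n,2)$, where the cycle can close by stepping to $(n,1)$. The zig-zag leaves column $i$ at row~$2$ exactly when $i$ is even, so the closure works precisely when $n$ is even; the length $m-1$ of each column's segment plays no role in stitching consecutive columns together. For instance, with $m=4$ and $n=3$ (so $m$ even but $n$ odd) the snake ends at the corner $(3,4)$, both of whose neighbours have already been visited. The fix is immediate: apply the isomorphism $P_n\times P_m\cong P_m\times P_n$ to assume $n$ even rather than $m$. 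With that swap, the argument is correct and the proof is complete. (Your closing remark about bipartiteness correctly shows that $mn$ even is necessary for a Hamiltonian cycle, but note this does not show it is necessary for RP---the paper in fact conjectures all grid graphs are RN.)
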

\begin{proof}
For these parameters, $P_n\times P_m$ is Hamiltonian and thus RP.
\end{proof}

We conjecture that all grid graphs are resistance nonnegative, and ask about generalizations.
\begin{conjecture}[Grid graphs are RN]
$P_n\times P_m$ is RN for all $m,n\in \N$.
\end{conjecture}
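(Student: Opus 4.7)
The plan is to split the conjecture along the parities of $m$ and $n$. When $\min(m,n)=1$ the grid is a path graph, which is SRN as noted in Section \ref{sec: resistance nonnegative graphs}. When $mn$ is even and $m,n\geq 2$, the standard boustrophedon (snake) construction yields a Hamiltonian cycle in $P_n\times P_m$, so $G$ is RP and hence RN by Theorem \ref{thm: Hamiltonian implies RP}. The substantive case is therefore when both $m$ and $n$ are odd and at least $3$. Here $G$ is bipartite with parts of sizes $(mn+1)/2$ and $(mn-1)/2$, so Proposition \ref{prop: RN RP and bipartite} rules out RP and the goal is to prove SRN.

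For this case, my first attempt would be to produce a point of $P(G)^{\circ}\cap 2M(G)$ as a convex combination of indicator vectors of Hamiltonian paths. Every Hamiltonian path $H$ satisfies $\deg_H(v)\leq 2$ at every vertex, so any such convex combination automatically lies in every halfspace $\mathcal{H}_v$. By Corollary \ref{cor: RN intersection halfspaces}, it then suffices to exhibit a collection $\mathcal{F}$ of Hamiltonian paths whose convex hull meets $P(G)^{\circ}$, equivalently, by Proposition \ref{prop: non-separable trees}, is non-separable. The odd grid is Hamiltonian-path-rich: a snake path joining two same-parity corners always exists, and further paths are obtained by local rerouting moves such as swapping a $U$-turn between adjacent rows or rotating a $2\times 2$ subsquare.

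The key step, and the main obstacle, is the non-separability check: for every nonempty $A\subsetneq E$ that is not the edge set of a union of biconnected components, one must exhibit a Hamiltonian path $H$ with $|H\cap A|$ strictly less than the upper bound in Corollary \ref{cor: r hyperplane inequalities}. I would attempt this by showing that the set of Hamiltonian paths of the odd grid is connected under a finite set of local moves, and that the differences $e_H-e_{H'}$ across these moves span the full affine hull of $P(G)$ inside $\R^E$; this would force the convex hull of $\{e_H\mid H\text{ Hamiltonian path}\}$ to be full-dimensional inside $P(G)$ and hence to meet $P(G)^{\circ}$.

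As a fallback, should the combinatorial attack stall, I would turn to Corollary \ref{cor: RN from inverse resistance matrices} and exploit the $\Z/2\times\Z/2$ reflective symmetry of the grid (or $D_4$ when $m=n$). Symmetric edge weights reduce the conditions $p_v(c)\geq 0$ to a bounded number of orbit inequalities, and in view of the negative interior curvatures at uniform weights reported in \cite{dawkins_2024_resistance}, a natural family to try is weights that monotonically increase from boundary edges toward the interior. An induction on $\min(m,n)$, peeling off two-row strips and invoking closure results from Section \ref{sec: graph operations}, could in principle propagate RN from smaller to larger odd grids while preserving the chosen weight profile.
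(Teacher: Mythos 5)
This statement is posed in the paper as an open conjecture, not a theorem, so there is no proof in the paper to compare against. Your proposal correctly dispatches the cases the paper itself already settles: $\min(m,n)=1$ gives a path (SRN), and $mn$ even with $m,n\geq 2$ gives a Hamiltonian grid, hence RP by Theorem~\ref{thm: Hamiltonian implies RP}. You also correctly observe via Proposition~\ref{prop: RN RP and bipartite} that the odd-by-odd case cannot be RP, so the target there is SRN. But that odd-by-odd case is precisely what remains open, and your proposal does not close it.

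Concretely, the gap is in the Hamiltonian-path approach. Exhibiting a convex combination of indicator vectors of Hamiltonian paths that lands in $P(G)^{\circ}$ requires, by Proposition~\ref{prop: non-separable trees}, that the chosen family $\mathcal{F}$ of Hamiltonian paths be non-separable, i.e.\ for every linear functional $w$ on $\R^E$ (not constant on the biconnected component) some $H\in\mathcal{F}$ fails to maximize $w$. Your plan is to show that the Hamiltonian paths of the odd grid are connected under local moves whose difference vectors $e_H-e_{H'}$ span the affine hull of $P(G)$, which has dimension $|E|-1$. This is an assertion, not a proof: you have neither specified a complete move set, nor shown the move graph is connected, nor computed the rank of the difference vectors. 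It is not even clear that the Hamiltonian paths of an odd grid affinely span the spanning tree polytope --- and note that full-dimensionality of $\operatorname{conv}(e_H)$ is a \emph{sufficient} condition, not a necessary one, so even a negative answer to that question would not settle the conjecture. The fallback via Corollary~\ref{cor: RN from inverse resistance matrices}, symmetry reduction, and two-row peeling is likewise a plan without any of the inequalities verified, and it is not obvious that the closure operations of Section~\ref{sec: graph operations} (edge addition preserving RP, Kron reduction, circle inversion) can be assembled into the inductive step you envision, since removing a strip of the grid is not one of these operations. As written, the proposal reduces the conjecture to its genuinely hard case and then stops; it does not constitute a proof.
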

\begin{question}
Are all graphs $P_n\times \dots\times P_m$ RN? Which biconnected planar graphs are RN?
\end{question}

\subsection{Kron reduction}
Kron reduction is an important tool in electrical circuit analysis, which allows to reduce a large circuit to a smaller one while maintaining its electrical properties \cite{dorfler_2013_kron}. Combinatorially, this operation is very simple; we write $U^c=V\setminus U$ and $\partial U=\{v\in U^c\mid uv\in E \text{~for some $u\in U$}\}$.
\begin{definition}[Kron reduction]\label{def: definition Kron reduction}
Let $G$ be a connected graph with vertex subset $U\subset V$. The \emph{Kron reduction} of $U$ in $G$ is the graph obtained by removing $U$ and adding all possible edges between $\partial U$; we write $\kron_U(G)$.
\end{definition}
\begin{definition}[matched weights]
Let $\widetilde{G}$ be the Kron reduction of $x\in V$ in $G$ with edge weights $c$. The \emph{matched weights} $\tilde{c}$ in $\widetilde{G}$ are defined as
$$
\tilde{c}_{uv} = c_{uv}+\frac{c_{ux}\cdot c_{xv}}{\sum_{xy\in E(G)}c_{xy}} \text{~if $u,v\in\partial x$,}\quad\quad\quad \tilde{c}_{uv}=c_{uv}\text{~otherwise,}\quad\quad\quad\text{~for $uv\in E(\widetilde{G}).$}
$$
\end{definition}
We recall some relevant properties of Kron reductions; see \cite[Ch. 3]{devriendt_2022_thesis},\cite{dorfler_2013_kron} for more details.
\begin{lemma}[quotient property, \cite{dorfler_2013_kron}]
Let $U=\{u_1,\dots,u_k\}\subset V(G)$ in arbitrary order. Then 
\begin{equation}\label{eq: composition}
\kron_U(G)=\kron_{u_k}\circ\dots\circ\kron_{u_1}(G).
\end{equation}
\end{lemma}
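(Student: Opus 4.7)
My plan is to translate Kron reduction into Schur complementation of the Laplacian matrix and reduce the claimed identity to the classical Crabtree--Haynsworth quotient formula for Schur complements. Throughout, write $L=L_G$ for the weighted Laplacian of $G$ and $L[W]$ for its principal submatrix on rows and columns indexed by $W\subseteq V$.

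The first step is to verify the single-vertex case: for any $x\in V$, the Schur complement $L/L[\{x\}]$ coincides with the Laplacian of $\kron_x(G)$ with matched weights. Since $L[\{x\}]=d(x):=\sum_{xy\in E}c_{xy}$ is a scalar, the off-diagonal $(u,v)$-entry of $L/L[\{x\}]$ equals
$$L_{uv}-\frac{L_{ux}L_{xv}}{d(x)}=-\Bigl(c_{uv}+\frac{c_{ux}c_{xv}}{d(x)}\Bigr)=-\tilde c_{uv}\qquad\text{for }u\neq v\text{ in }\partial x,$$
and equals $L_{uv}$ otherwise; the row sums vanish by a short check, so the matrix is a weighted Laplacian on $V\setminus\{x\}$ with exactly the matched weights.

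Second, I would induct on $|U|$. Setting $U'=\{u_1,\ldots,u_{k-1}\}$, the inductive hypothesis identifies the Laplacian of $\kron_{u_{k-1}}\circ\cdots\circ\kron_{u_1}(G)$ with $L/L[U']$. Applying the single-vertex step to $u_k$ in this reduced graph gives
$$L_{\kron_{u_k}\circ\cdots\circ\kron_{u_1}(G)}\;=\;\bigl(L/L[U']\bigr)\,\Big/\,\bigl(L/L[U']\bigr)\bigl[\{u_k\}\bigr],$$
and the Crabtree--Haynsworth identity rewrites the right-hand side as $L/L[U'\cup\{u_k\}]=L/L[U]$. Hence the Laplacian obtained from iterated single-vertex reductions depends on $U$ only as a set, not on the ordering; comparing with the combinatorial description of $\kron_U(G)$ (whose edge set equals the off-diagonal support of $L/L[U]$, generically a clique on $\partial U$ together with the surviving edges of $G$ inside $V\setminus U$) then yields \eqref{eq: composition}.

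The main technical obstacle is ensuring that every Schur complement appearing in the induction is well-defined, i.e.\ that $L[W]$ is invertible for each intermediate $W\subsetneq V$. This follows from $L$ being positive semidefinite with kernel exactly $\spn(\mathbf{1})$ whenever $G$ is connected, so that every proper principal submatrix of $L$ is positive definite. A smaller subtlety is that some matched weights produced at an intermediate stage can vanish (for instance when $G[U]$ is disconnected, so two vertices of $\partial U$ lie in boundaries of different components of $G[U]$), in which case the corresponding entries in the Schur complement are zero and the edges in the combinatorial description of $\kron_U(G)$ should be read as absent; the Laplacian identity accounts for this automatically.
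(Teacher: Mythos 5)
The paper does not prove this lemma itself --- it cites it directly from Dörfler and Bullo \cite{dorfler_2013_kron}, and your argument (identify single-vertex Kron reduction with the Schur complement $L/L[\{x\}]$, verify the matched weights, then induct via the Crabtree--Haynsworth quotient formula, with invertibility of the intermediate $L[W]$ guaranteed by $L$ being PSD with kernel $\spn(\mathbf{1})$) is precisely the approach in that reference. The proof is correct; in particular you were right to flag the subtlety that when $G[U]$ is disconnected the Schur complement $L/L[U]$ can have vanishing off-diagonal entries on pairs of $\partial U$, so that the clique promised by Definition \ref{def: definition Kron reduction} may appear with some zero-weight edges --- the Laplacian identity $L/L[U]=(L/L[U'])/(L/L[U'])[\{u_k\}]$ is what makes the order-independence hold regardless, and the combinatorial statement should be read as ``equal as weighted graphs''.
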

To understand Kron reduction it often suffices to understand Kron reduction of single vertices. For a general vertex subset $U\subset V$, if the weights are matched at every step in the decomposition of $\widetilde{G}=\kron_U(G)$ along single vertices as in \eqref{eq: composition}, then we say that the final weights $\tilde{c}$ on $E(\widetilde{G})$ are matched; these weights can also be obtained directly from the Schur complement of the associated Laplacian matrix. The following is relevant for RN graphs:
\begin{lemma}[{\cite[Prop. 3.25]{devriendt_2022_thesis}}]\label{lem: resistance curvature and kron}
Let $\widetilde{G}=\kron_x(G)$ with matched edge weights $\tilde{c}$. Then
$$
\tilde{p}_v(\tilde{c}) = p_v(c) + \frac{c_{xv}\cdot p_x(c)}{\sum_{xy\in E(G)}c_{xy}}\text{~if $v\in\partial x$,}\quad\quad\quad \tilde{p}_v(\tilde{c}) = p_v(c)\text{~otherwise,}\quad\quad\text{~for $v\in V(\widetilde{G})$}.
$$
\end{lemma}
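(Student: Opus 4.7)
The plan is to reduce the entire statement to a single clean matrix identity and then do bookkeeping on edge sums. Two inputs drive the argument. First, a standard property of Kron reduction (verifiable from the Schur complement formula for the Laplacian together with the pseudoinverse expression \eqref{eq: definition effective resistances pseudoinverse}) is that effective resistances between non-eliminated vertices are preserved:
$$
\tilde{\omega}_{uv}=\omega_{uv}\qquad\text{for all }u,v\in V\setminus\{x\}.
$$
Second, I would establish the matrix identity
$$
L\Omega=2p\,\mathbf{1}^T-2I,
$$
obtained by writing $\Omega=d\mathbf{1}^T+\mathbf{1}d^T-2L^{\dagger}$ with $d=(L^{\dagger}_{vv})_{v\in V}$, then computing $L\Omega$ using $L\mathbf{1}=0$ and $LL^{\dagger}=I-\tfrac{1}{n}\mathbf{11}^T$, and finally matching the diagonal entries with the definition $p_v=1-\tfrac{1}{2}\sum_{uv\in E}c_{uv}\omega_{uv}$ to identify the column vector that multiplies $\mathbf{1}^T$.

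For $v\in V(\widetilde{G})\setminus\partial x$, every edge at $v$ in $\widetilde{G}$ coincides with an edge at $v$ in $G$ with unchanged matched weight, and $\tilde{\omega}_{uv}=\omega_{uv}$ on all these edges. Hence $\tilde{p}_v(\tilde{c})=p_v(c)$ follows immediately from the definition of resistance curvature. The content of the lemma is the case $v\in\partial x$.

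For $v\in\partial x$, I would expand $\tilde{p}_v(\tilde{c})=1-\tfrac{1}{2}\sum_{uv\in E(\widetilde{G})}\tilde{c}_{uv}\omega_{uv}$, split the sum into edges $uv$ with $u\notin\partial x$ (whose matched weights are unchanged) and edges $uv$ with $u\in\partial x$ (using $\tilde{c}_{uv}=c_{uv}+c_{ux}c_{xv}/S$ with $S=\sum_{xy\in E}c_{xy}$ and treating non-edges of $G$ as carrying weight zero), and rearrange to obtain
$$
\sum_{uv\in E(\widetilde{G})}\tilde{c}_{uv}\omega_{uv}=\Big(\sum_{uv\in E(G),\,u\neq x}c_{uv}\omega_{uv}\Big)+\frac{c_{xv}}{S}\sum_{u\in\partial x}c_{xu}\omega_{uv}.
$$
The first parenthesized sum equals $(2-2p_v(c))-c_{xv}\omega_{xv}$ by the definition of $p_v(c)$. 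For the second sum, the $(x,v)$-entry of the matrix identity $L\Omega=2p\mathbf{1}^T-2I$ reads
$$
S\,\omega_{xv}-\sum_{u\in\partial x}c_{xu}\omega_{uv}=2p_x(c)-2\delta_{xv},
$$
and for $v\in\partial x\subseteq V\setminus\{x\}$ the Kronecker delta vanishes. Substituting the resulting expression for $\sum_{u\in\partial x}c_{xu}\omega_{uv}$ into the display above produces a cancellation of the $c_{xv}\omega_{xv}$ terms and leaves
$$
\sum_{uv\in E(\widetilde{G})}\tilde{c}_{uv}\omega_{uv}=2-2p_v(c)-\frac{2c_{xv}\,p_x(c)}{S},
$$
from which the claimed formula for $\tilde{p}_v(\tilde{c})$ follows.

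The main obstacle I expect is the clean derivation of the matrix identity $L\Omega+2I=2p\mathbf{1}^T$: it is really a repackaging of Foster's theorem in matrix form, but it must be written carefully through the pseudoinverse identities. Once that identity is in hand, the rest of the argument is a book-keeping split of the sum over $E(\widetilde{G})$ (with the only subtle point being correct treatment of pairs $u,v\in\partial x$ that are non-adjacent in $G$ but adjacent in $\widetilde{G}$) and a telescoping cancellation.
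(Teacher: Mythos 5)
Your proof is correct. The paper itself does not prove this lemma (it cites Prop.~3.25 of the author's thesis), so there is no internal proof to compare against, but I have verified your argument and it is complete. A few remarks that may be useful.

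The two inputs you lean on are both sound. That $\tilde{\omega}_{uv}=\omega_{uv}$ for $u,v\in V\setminus\{x\}$ is the standard Schur-complement fact about Kron reduction, and the matched weights $\tilde{c}$ are precisely the entries of that Schur complement, so they are consistent. The matrix identity $L\Omega=2p\mathbf{1}^T-2I$ is the step you flagged as the main obstacle, but it is actually easier than you fear given material already in the paper: from equation \eqref{eq: L and K}, for normalized weights one has $L=-2K+2pp^T$ with $K=\Omega^{-1}$ and $K\mathbf{1}=p$ (Proposition~\ref{prop: p in terms of K}), hence
$$
L\Omega=-2I+2p\,(p^T\Omega)=-2I+2p\,(\Omega K\mathbf{1})^T=-2I+2p\mathbf{1}^T.
$$
Both sides are invariant under $c\mapsto\lambda c$ (since $L\Omega$ and $p$ are scale-invariant), so the identity holds for all edge weights, not only normalized ones. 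This shortcut saves you the pseudoinverse bookkeeping via $\Omega=d\mathbf{1}^T+\mathbf{1}d^T-2L^{\dagger}$, which also works but is lengthier.

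The subsequent bookkeeping for $v\in\partial x$ is handled correctly: you split $E(\widetilde G)$ incident on $v$ into edges with $u\in\partial x$ (where $\tilde c_{uv}=c_{uv}+c_{ux}c_{xv}/S$, reading $c_{uv}=0$ for non-edges of $G$) and edges with $u\notin\partial x$ (where $\tilde c_{uv}=c_{uv}$), observe that the $u=v$ term may be freely included because $\omega_{vv}=0$, substitute the $(x,v)$-entry of the matrix identity with $\delta_{xv}=0$, and the $c_{xv}\omega_{xv}$ terms cancel exactly to yield $p_v(c)+c_{xv}p_x(c)/S$. The $v\notin\partial x$ case is immediate because no new edges touch $v$, the existing weights are unchanged, and the resistances are preserved.
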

In the above lemma, we note that if $p_v(c),p_x(c)\geq 0$ then also $\tilde{p}_v(\tilde{c})\geq 0$, and similarly if all inequalities are strict. This leads to the following closure property:
\begin{proposition}
If $G$ is RN (resp. RP), then any Kron reduction of $G$ is RN (resp. RP).
\end{proposition}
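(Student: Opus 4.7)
The plan is to reduce to the single-vertex case via the quotient property \eqref{eq: composition} and then apply Lemma \ref{lem: resistance curvature and kron} directly. First I would fix weights $c$ witnessing that $G$ is RN, that is $p_v(c)\geq 0$ for all $v\in V(G)$, and consider a single-vertex reduction $\widetilde{G}=\kron_x(G)$ with the associated matched weights $\tilde{c}$. The lemma then gives, for every $v\in V(\widetilde{G})$, an expression for $\tilde{p}_v(\tilde{c})$ which is either equal to $p_v(c)$ (when $v\notin\partial x$) or equal to $p_v(c)+c_{xv}p_x(c)/\sum_{xy\in E(G)}c_{xy}$ (when $v\in\partial x$). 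Both of these are manifestly nonnegative sums of nonnegative quantities, so $\tilde{p}(\tilde{c})\geq 0$. The same argument with strict inequalities gives the RP case.

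For a general vertex subset $U=\{u_1,\dots,u_k\}\subset V$, I would iterate: set $G^{(0)}=G$ with weights $c^{(0)}=c$, and at step $i$ let $G^{(i)}=\kron_{u_i}(G^{(i-1)})$ equipped with the matched weights $c^{(i)}$ derived from $c^{(i-1)}$. By the quotient property we have $\kron_U(G)=G^{(k)}$, and the single-vertex argument applied at each step propagates nonnegativity (resp. positivity) of the resistance curvature from $c^{(i-1)}$ to $c^{(i)}$. After $k$ steps the curvature on $\kron_U(G)$ with weights $c^{(k)}$ is $\geq 0$ (resp. $>0$), which is the desired conclusion.

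There are two small things to check along the way, but neither is a serious obstacle. First, one must verify that the matched weights remain finite and strictly positive at each single-vertex step, so that $c^{(i)}\in(0,\infty)^{E(G^{(i)})}$ qualifies as an admissible weighting for the definition of RN/RP; this is immediate from the formula $\tilde{c}_{uv}=c_{uv}+c_{ux}c_{xv}/\sum_{xy\in E(G)}c_{xy}$ (or $\tilde{c}_{uv}=c_{uv}$), since all terms are positive and finite. Second, one needs $G^{(i)}$ to be connected so that resistance curvature is defined; this is a standard property of Kron reduction, as removing a single vertex and completing its neighborhood into a clique cannot disconnect a connected graph. With these two observations in place, the iteration goes through without difficulty, and the proof reduces to a clean one-line application of Lemma \ref{lem: resistance curvature and kron} followed by induction on $|U|$.
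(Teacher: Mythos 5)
Your proof is correct and takes essentially the same route as the paper: decompose $\kron_U$ into single-vertex reductions via the quotient property, keep the weights matched at each step, and invoke Lemma \ref{lem: resistance curvature and kron} to propagate nonnegativity (resp.\ positivity) of the curvature. The only difference is that you spell out two sanity checks (positivity of the matched weights and preservation of connectivity) that the paper leaves implicit; both are correct and harmless to include.
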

\begin{proof}
Let $G$ be an RN graph with edge weights $c$ such that $p(c)\geq 0$ and take any $U\subset V$. Then by taking Kron reductions of single vertices as in \eqref{eq: composition} with matched edge weights, and using Lemma \ref{lem: resistance curvature and kron}, we find that $\kron_U(G)$ is again RN. The same derivation holds for RP graphs.
\end{proof}

\subsection{Circle inversion}
Different from the other two operations, the next operation is defined with respect to a choice of edge weights; it originates from Fiedler's matrix theory of simplices, where it corresponds to spherical inversion of a simplex around one of its vertices \cite[Rem. 3.4.5]{fiedler_2011_matrices}.
\begin{definition}[circle inversion, \cite{fiedler_2011_matrices}]\label{def: definition circle inversion}
Let $G$ be an RN graph with vertex $x\in V$ and weights $c$ such that $p(c)\geq 0$. The \emph{circle inversion over $x$} is the graph obtained by removing $x$ and adding a new vertex $\hat{x}$ with edges $\{ \hat{x}y\mid p_y(c)>0\text{~in $G$}\}$; we write $\inv_x(G,c)$.
\end{definition}
Similar to Kron reduction, circle inversion comes with a distinguished choice of edge weights. We make use of the effective resistance $\omega$ defined as in \eqref{eq: definition effective resistances pseudoinverse} between any pair of vertices.
\begin{definition}
Let $\widetilde{G}$ be the circle inversion over $x$ with respect to weights $c$. The \emph{matched weights} $\tilde{c}$ in $\widetilde{G}$ are defined as
$$
\tilde{c}_{\hat{x}y} = 2\cdot p_{y}(c)\cdot \omega_{xy}\text{~if $y\in\partial\hat{x}$,}\quad\quad
\tilde{c}_{uv}=c_{uv}\cdot \omega_{ux}\cdot \omega_{xv}\text{~if $u,v\not\in\partial \hat{x}$}.
$$
\end{definition}

These weights are only nonnegative if $p(c)\geq 0$. When fixing a choice of edge weights on a graph and keeping these weights matched under circle inversions, the map $\inv_{\bullet}:(G,c)\mapsto(\widetilde{G},\tilde{c})$ commutes and has an inverse, i.e., $\inv_x\circ \inv_y = \inv_y \circ \inv_x$ up to relabeling vertices, and $\inv_{\hat{x}}\circ\inv_{x}=\operatorname{Id}$. The relevance of circle inversion for RN graphs is due to the following result:
\begin{lemma}[{\cite[Thm. 3.4.4]{fiedler_2011_matrices}}]\label{lem: resistance curvature inversion}
Let $\widetilde{G}=\inv_x(G,c)$ with matched edge weights $\tilde{c}$. Then
$$
\tilde{p}_v(\tilde{c}) = \tfrac{1}{2}\cdot c_{xv}\cdot\omega_{xv} \text{~if $v\in\partial x$,}\quad\quad 
\tilde{p}_{v}(\tilde{c}) = p_x(c)\text{~if $v=\hat{x}$,} \quad\quad
\tilde{p}_v(\tilde{c}) = 0\text{~otherwise,} \quad\text{~for $v\in V(\widetilde{G})$}.
$$
\end{lemma}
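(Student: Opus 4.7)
The plan is to establish the spherical-inversion transformation law for effective resistances under circle inversion, and then derive the three curvature formulas by a direct computation using $\tilde p_v(\tilde c) = 1 - \tfrac12\sum_{uv\in E(\widetilde G)}\tilde c_{uv}\tilde\omega_{uv}$ together with Foster's theorem applied to $G$. The matched weights $\tilde c$ in the definition of circle inversion are designed precisely so that the effective resistances in $(\widetilde G,\tilde c)$ arise by Euclidean inversion of the resistances in $(G,c)$; explicitly, the key claim I would prove first is
\[
\tilde\omega_{\hat x v} \,=\, \frac{1}{\omega_{xv}} \ \text{for $v\in\partial\hat x$,} \qquad \tilde\omega_{uv} \,=\, \frac{\omega_{uv}}{\omega_{ux}\,\omega_{xv}} \ \text{for $u,v\in V\setminus\{x\}$.}
\]

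This is Fiedler's spherical-inversion identity \cite[Thm.~3.4.4]{fiedler_2011_matrices}. In Fiedler's simplex model of a Laplacian with $p\ge 0$, the matched weights are chosen so that the new Laplacian $\widetilde L$ corresponds to the simplex obtained by Euclidean inversion of radius $1$ around $x$, under which squared edge lengths transform as $|yz|^2 \mapsto |yz|^2/(|xy|^2|xz|^2)$. A self-contained graph-theoretic derivation proceeds by writing $\widetilde L$ in block form using \eqref{eq: L and K}, computing its Moore--Penrose pseudoinverse with a rank-one correction, and reading off $\tilde\omega$ from \eqref{eq: definition effective resistances pseudoinverse}. Multiplying by $\tilde c$ converts the identity into the much cleaner statements $\tilde r_{\hat x v} = 2p_v(c)$ and $\tilde r_{uv} = r_{uv}(c)$, which are the only data needed for the final calculation.

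The three curvature formulas then follow by a short case analysis. For $v\in\partial x$, the edges of $\widetilde G$ incident to $v$ are the (possibly absent) edge $\hat x v$ together with all $G$-edges at $v$ except $xv$, and summing the $\tilde r$-values gives $2p_v(c) + (2 - 2p_v(c)) - r_{xv}(c) = 2 - c_{xv}\omega_{xv}$, so $\tilde p_v(\tilde c) = \tfrac12 c_{xv}\omega_{xv}$. For $v = \hat x$, the edge sum equals $\sum_{y\in\partial\hat x}2p_y(c) = 2(1-p_x(c))$ using $\sum_v p_v(c) = 1$ from Foster's theorem (since $p_y(c)\ge 0$ for all $y$, the support condition $y\in\partial\hat x$ simply excludes $y=x$), giving $\tilde p_{\hat x}(\tilde c) = p_x(c)$. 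For all other vertices ($v\ne\hat x$ and $v\notin\partial x$), the same identities give exactly $2$, hence $\tilde p_v(\tilde c) = 0$. The main obstacle is the first step, i.e.\ verifying the Möbius-type transformation for $\tilde\omega$; the most transparent route is via Fiedler's simplex model, which reduces the claim to an elementary Euclidean identity, while a purely graph-theoretic proof via weighted spanning-forest counts and the Matrix-Tree theorem is also possible but considerably more laborious.
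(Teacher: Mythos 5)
The paper does not actually prove this lemma; it cites it directly from Fiedler's book \cite[Thm.~3.4.4]{fiedler_2011_matrices} without argument, so there is no in-text proof to compare against. Your proposal supplies a genuine derivation, and the high-level structure is sound: everything hinges on the Möbius-type transformation law $\tilde\omega_{\hat x v}=1/\omega_{xv}$, $\tilde\omega_{uv}=\omega_{uv}/(\omega_{ux}\omega_{xv})$, which combined with the matched weights gives the clean relative-resistance identities $\tilde r_{\hat x v}=2p_v(c)$ and $\tilde r_{uv}=r_{uv}(c)$. Your case analysis from there is correct: for $v\in\partial x$ the sum of $\tilde r$ over $\widetilde G$-edges at $v$ telescopes to $2-r_{xv}(c)$ regardless of whether $p_v(c)>0$ (the $\hat x v$ edge contributes $2p_v(c)$ exactly when it is present, and zero contribution otherwise — the formula $2p_v(c)$ is valid in both cases); for $v=\hat x$ the sum is $2\sum_{y\neq x}p_y(c)=2(1-p_x(c))$ since $p\ge 0$ makes the support restriction vacuous and $\sum_v p_v(c)=1$ by Foster; and for the remaining vertices it is exactly $2$. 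These recover the three stated formulas.

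The one genuine soft spot is that the $\tilde\omega$ transformation law is asserted rather than proved — you name Fiedler's simplex model and sketch a Laplacian block-computation route, but neither is carried out. Since the lemma itself is attributed to Fiedler's Thm.~3.4.4, and your reduction converts Fiedler's geometric statement into the curvature identities via elementary bookkeeping, this is a reasonable division of labour; still, a referee insisting on self-containment would want the pseudoinverse calculation for $\widetilde L$ written out. As it stands your argument is a correct and more explicit derivation than what the paper offers, with the caveat that it rests on an unverified (but standard and well-attributed) inversion identity for effective resistances.
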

This directly leads to another closure property for RN and RP graphs.
\begin{proposition}
If $G$ is RN, then any circle inversion of $G$ is RN. Furthermore, if $\inv_x(G,c)$ is over a vertex $x$ with $p_x(c)>0$ and which is connected to all other vertices, then it is RP.
\end{proposition}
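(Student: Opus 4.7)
The plan is to apply Lemma \ref{lem: resistance curvature inversion} essentially verbatim, after a short check that the matched weights on $\widetilde{G}=\inv_x(G,c)$ are indeed legitimate edge weights (strictly positive and finite), which is exactly where the hypothesis $p(c)\geq 0$ is used.

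First I would fix weights $c$ with $p(c)\geq 0$, which exist because $G$ is RN. Inspecting the definition of matched weights, the new edges $\hat{x}y$ have weight $\tilde{c}_{\hat{x}y}=2p_y(c)\omega_{xy}$, which is strictly positive: by construction $y\in\partial\hat{x}$ exactly means $p_y(c)>0$, and $\omega_{xy}>0$ since $x\neq y$ in a connected graph. The remaining edges $uv\in E(\widetilde{G})$ inherit weights $\tilde{c}_{uv}=c_{uv}\omega_{ux}\omega_{xv}$ from $G$, again a product of strictly positive finite quantities. So $0<\tilde{c}<\infty$ on all of $E(\widetilde{G})$, and $\tilde{p}(\tilde{c})$ is well-defined.

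Next, I would invoke Lemma \ref{lem: resistance curvature inversion} and examine the three cases for $v\in V(\widetilde{G})$. If $v\in\partial x$, then $\tilde{p}_v(\tilde{c})=\tfrac{1}{2}c_{xv}\omega_{xv}$, which is strictly positive. If $v=\hat{x}$, then $\tilde{p}_{\hat{x}}(\tilde{c})=p_x(c)\geq 0$ by the RN hypothesis. In every other case, $\tilde{p}_v(\tilde{c})=0$. Collecting these values gives $\tilde{p}(\tilde{c})\geq 0$ coordinatewise, so $\widetilde{G}$ is RN.

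For the second statement, under the additional hypotheses $p_x(c)>0$ and $\partial x=V\setminus\{x\}$ in $G$, the vertex set of $\widetilde{G}$ decomposes as $\partial x\cup\{\hat{x}\}$, so the ``otherwise'' case from Lemma \ref{lem: resistance curvature inversion} never occurs. Combining the first two cases gives $\tilde{p}_v(\tilde{c})>0$ for every $v\in V(\widetilde{G})$, hence $\widetilde{G}$ is RP. The only thing to be careful about is recognizing that the strict positivity of $\tilde{p}_v$ on $\partial x$ requires nothing beyond $c_{xv},\omega_{xv}>0$; all the real content is packaged inside Lemma \ref{lem: resistance curvature inversion}, so there is no serious obstacle in the argument.
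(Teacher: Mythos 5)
Your proof is correct and follows exactly the route the paper intends: the paper states that Lemma \ref{lem: resistance curvature inversion} ``directly leads to'' this proposition, and offers no further argument. You fill in the two steps that are being left implicit, namely that the matched weights $\tilde{c}$ are genuine (strictly positive, finite) edge weights on $\widetilde{G}$ -- where the hypothesis $p(c)\geq 0$ is used -- and that reading off the three cases of the lemma gives $\tilde{p}(\tilde{c})\geq 0$, respectively $\tilde{p}(\tilde{c})>0$ under the extra hypotheses. The only small imprecision in phrasing is ``fix weights $c$ with $p(c)\geq 0$, which exist because $G$ is RN'': the weights $c$ are already part of the data $\inv_x(G,c)$ and are required by Definition \ref{def: definition circle inversion} to satisfy $p(c)\geq 0$, so there is nothing to choose. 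This does not affect the argument.
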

\begin{figure}[h!]
    \centering
    \includegraphics[width=0.8\linewidth]{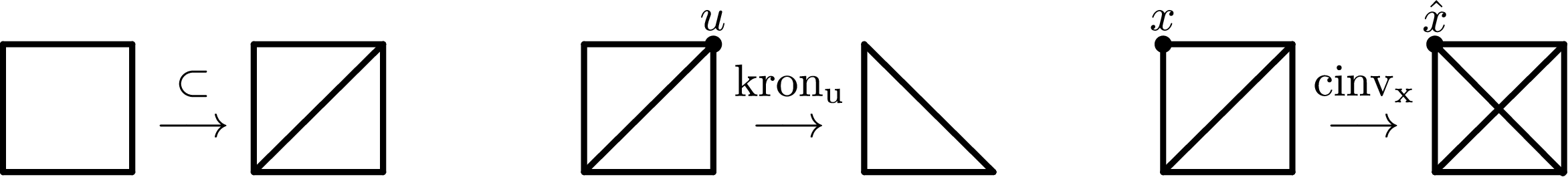}
    \caption{Example of a subgraph, Kron reduction and circle inversion (with $c=1$).}
    \label{fig: graph operations}
\end{figure}

\section{Nonnegativity from submodularity}\label{sec: nonnegativity and submodularity}
We conclude the article with a speculative third approach to characterizing resistance nonnegativity. We again start from a space $\mathcal{S}(G)$ that parametrizes all possible effective resistances in a graph and conjecture that the condition $p\geq 0$ translates to linear inequalities on $\mathcal{S}(G)$; more precisely, we conjecture that these inequalities are those of the submodular cone.

For $U\subseteq V$ nonempty, we write $\Omega[U]$ for the submatrix of the resistance matrix $\Omega$ with rows and columns in $U$. A variant of the following set function was introduced in \cite[Ch. 3.4]{devriendt_2022_thesis}.
\begin{definition}[resistance capacity]\label{def: resistance capacity}
For a normalized graph $G$, the \emph{resistance capacity} is a set function $\tau_{\bullet}(c):2^V\rightarrow\R$ defined on the subsets of $V$ as: $\tau_{\emptyset}(c)=0$, and $\tau_{v}(c)=\tfrac{1}{2}$ for $v\in V$, and
$$
\tau_{U}(c) = \frac{1}{2}+\frac{1}{2}\Big(\sum_{u,v\in U} (\Omega[U]^{-1})_{uv}\Big)^{-1} \quad\quad\text{~for $U\subseteq V$ with $\vert U\vert\geq 2$.}
$$
\end{definition}
\begin{remark}
The resistance capacity $\tau_{V}$ is a discrete analogue of a well-studied invariant in potential theory on metric graphs, the so-called tau constant of a metric graph \cite[Thm. 1.27]{cinkir_2007_thesis}.
\end{remark}

A set function $f$ is called \emph{monotone} if it satisfies $f(A)\leq f(B)$ for all $A\subseteq B$.
\begin{proposition}
The resistance capacity is monotone and lies in $[0,1]$.
\end{proposition}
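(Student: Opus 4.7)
The strategy is to reduce both claims to a single monotonicity statement about the auxiliary quantity $S_U := \mathbf{1}^T \Omega[U]^{-1}\mathbf{1}$, and then prove that statement through a Schur-complement recursion whose sign is dictated by a Cayley--Menger-type determinant pattern for the resistance matrix.

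First, I would settle the boundary cases: $\tau_{\emptyset} = 0$ and $\tau_v = 1/2$ hold by definition, and $\tau_V = 1$ because the normalization condition $\sum_{u,v}K_{uv}=1$ is exactly $S_V = 1$. Next, since $\tau_U = \tfrac{1}{2} + \tfrac{1}{2S_U}$ for $|U|\geq 2$, both $\tau_U\in[0,1]$ and the monotonicity $\tau_A \leq \tau_B$ in the regime $|A|,|B|\geq 2$ follow at once from $S_U \geq 1$, while the remaining transitions $\tau_\emptyset \leq \tau_v \leq \tau_U \leq \tau_V$ amount to $0\leq\tfrac12\leq\tau_U\leq 1$ and reduce again to $S_U\geq 1$ (equivalently $S_U>0$ for the middle inequality).

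The heart of the proof is the Schur complement recursion. For $B = A \cup \{z\}$ with $|A|\geq 2$, writing $\Omega[B]$ in block form with off-diagonal block $\omega_A = (\omega_{az})_{a\in A}$, the block inverse formula yields
$$
S_B \,=\, S_A + \frac{(1-\beta_A)^2}{s},\qquad \beta_A = \mathbf{1}^T\Omega[A]^{-1}\omega_A,\qquad s = -\omega_A^T\Omega[A]^{-1}\omega_A.
$$
Thus $S_B \leq S_A$ as soon as $s<0$, and chaining from any $U$ up to $V$ gives $S_U \geq S_V = 1$, which is what the previous step needs; the case $|A|=1, |B|=2$, where $\Omega[A]$ is singular, is handled directly from $\Omega[\{u,v\}]^{-1}$ and gives $S_{\{u,v\}} = 2/\omega_{uv} > 0$, so no gap remains.

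The one nontrivial ingredient is the sign $s<0$. Using $\det\Omega[B] = s\cdot\det\Omega[A]$, this is equivalent to the statement $\operatorname{sign}(\det\Omega[U])=(-1)^{|U|-1}$ for every nonempty $U\subseteq V$, a classical Cayley--Menger/Schoenberg fact for squared Euclidean distance matrices. The relevant embedding is $v\mapsto (L^{\dagger})^{1/2}e_v$ in $\mathbb{R}^{n-1}$, under which $\omega_{uv} = \|(L^{\dagger})^{1/2}(e_u-e_v)\|^2$; affine independence of the image points (which follows from $\ker L^{\dagger} = \operatorname{span}(\mathbf{1})$) guarantees both the strict sign and the nonsingularity of every submatrix $\Omega[U]$, so the Schur recursion never breaks down. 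Making this sign pattern fully rigorous (or citing it) is the main obstacle, but it depends only on the Euclidean structure of effective resistance and does not require any further graph combinatorics.
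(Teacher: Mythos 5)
Your proof is correct and self-contained, but it takes a genuinely different route from the paper. The paper disposes of monotonicity with a single citation to \cite[Prop.\ 3.38]{devriendt_2022_thesis} and then uses only the boundary values $\tau_\emptyset=0$, $\tau_V=1$ (from normalization) to squeeze $\tau_U\in[0,1]$; it proves nothing from scratch. You instead reprove monotonicity from first principles: you reduce everything to the single auxiliary quantity $S_U=\mathbf{1}^T\Omega[U]^{-1}\mathbf{1}$, establish the one-step Schur recursion $S_B = S_A + (1-\beta_A)^2/s$, and pin down the sign $s<0$ via the inertia of squared Euclidean distance matrices. This is more work but it is illuminating: it makes explicit that monotonicity of $\tau$ is really the statement that $S_U$ decreases along inclusions, it exposes the Euclidean geometry behind the sign, and it shows the bound $S_U\geq S_V=1$ and monotonicity come from the same recursion. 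The cost is that you lean on the classical Schoenberg/Cayley--Menger signature fact, which you correctly flag as the one ingredient that would need a full proof or a clean citation.

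Two small imprecisions worth fixing. First, the sign claim $\operatorname{sign}\det\Omega[U]=(-1)^{|U|-1}$ should be stated for $|U|\geq 2$ only: for a singleton, $\Omega[\{u\}]=[0]$ is singular and its determinant is $0$, not $1$. This does not affect your recursion (you only invoke it for $|A|\geq 2$ and $|B|\geq 3$), but as written the claim ``for every nonempty $U$'' is false. Second, the phrasing that monotonicity for $|A|,|B|\geq 2$ ``follows at once from $S_U\geq 1$'' is slightly off; you actually need the stronger statement that $S$ is decreasing along inclusions, i.e.\ $S_A\geq S_B$ for $A\subseteq B$, which is exactly what your Schur recursion gives. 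The lower bound $S_U\geq 1$ is one consequence of this, not the source of it. Both are easy edits and the overall argument stands.
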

\begin{proof}
Monotonicity was proven in \cite[Prop 3.38]{devriendt_2022_thesis}. For $U=\emptyset$ and $U=V$ we directly compute $\tau_\emptyset=0$ and $\tau_{V}=1$, and by monotonicity this implies that $\tau_U\in [0,1]$ for all $U\subseteq V$.
\end{proof}

Next, we observe that the resistance capacity parametrizes all data of a weighted graph:
\begin{proposition}
The graph $G$ and edge weights $c$ can be recovered from $(\tau_U(c))_{U\subseteq V}$.
\end{proposition}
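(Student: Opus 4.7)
The plan is to show that the single-pair values of the resistance capacity already encode all pairwise effective resistances, and then invoke the classical fact that the resistance matrix determines the Laplacian of a connected weighted graph. Since the graph structure and weights are immediately visible in the Laplacian, this yields full recovery.

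First, I would evaluate $\tau_{\{u,v\}}$ explicitly. For $U=\{u,v\}$ the submatrix $\Omega[U]$ is the $2\times 2$ antidiagonal matrix with off-diagonal entry $\omega_{uv}$, hence its inverse is the $2\times 2$ antidiagonal matrix with off-diagonal entry $1/\omega_{uv}$. Therefore $\sum_{a,b\in U}(\Omega[U]^{-1})_{ab}=2/\omega_{uv}$, and Definition \ref{def: resistance capacity} gives
\[
\tau_{\{u,v\}}(c)=\tfrac{1}{2}+\tfrac{1}{4}\omega_{uv}.
\]
Thus the effective resistance of every pair $u,v\in V$ can be read off from the restriction of $\tau$ to $2$-element subsets. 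Collecting these yields the entire resistance matrix $\Omega$.

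Second, I would recover the Laplacian. Using \eqref{eq: definition effective resistances pseudoinverse} together with the identity $L^{\dagger}\mathbf{1}=0$, a standard computation gives the closed-form inversion
\[
L^{\dagger}=-\tfrac{1}{2}\bigl(I-\tfrac{1}{n}\mathbf{1}\mathbf{1}^{T}\bigr)\,\Omega\,\bigl(I-\tfrac{1}{n}\mathbf{1}\mathbf{1}^{T}\bigr),
\]
and then $L=(L^{\dagger}+\tfrac{1}{n}\mathbf{1}\mathbf{1}^{T})^{-1}-\tfrac{1}{n}\mathbf{1}\mathbf{1}^{T}$ as in the setup of Section \ref{sec: algebraic characterization}. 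From $L$, the edge set of $G$ is recovered as $E=\{uv:L_{uv}\neq 0\}$ (equivalently $L_{uv}<0$), and the normalized edge weights are $c_{uv}=-L_{uv}$ for $uv\in E$. The vertex set is of course the ground set $V$ on which $\tau$ is defined.

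There is no real obstacle: the content is concentrated in the first step, where the $2$-point capacities directly decode $\omega_{uv}$, after which the remainder is the classical Laplacian-from-resistance-matrix reconstruction already used in the proof of Theorem stating $\mathcal{K}(G)$. The only mild care needed is noting that $\tau$ was defined only for normalized weights, so the recovered $c$ is automatically the normalized representative of its common-rescaling class.
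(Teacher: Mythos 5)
Your proof is correct and follows the same essential strategy as the paper: both reduce to the observation that $\tau_{\{u,v\}}(c)=\tfrac{1}{2}+\tfrac{1}{4}\omega_{uv}$, which recovers $\Omega$ from the $2$-element restrictions of $\tau$, and then reconstruct the Laplacian from $\Omega$. The only difference is the final linear-algebra step: the paper passes through $K=\Omega^{-1}$ and uses \eqref{eq: L and K}, $L=-2K+2(K\mathbf{1})(K\mathbf{1})^T$, whereas you recover $L^\dagger=-\tfrac12(I-\tfrac1n\mathbf{1}\mathbf{1}^T)\Omega(I-\tfrac1n\mathbf{1}\mathbf{1}^T)$ by double-centering and then pseudo-invert. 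Both are standard and equivalent; the paper's route has the small advantage of staying inside the $K$-parametrization already set up in Section \ref{sec: algebraic characterization}, while yours is perhaps more self-contained for a reader unfamiliar with that section.
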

\begin{proof}
For $U=uv$, the value of $\tau$ was computed in \cite[p53]{devriendt_2022_thesis} to be $\tau_{uv}(c)=\tfrac{1}{2}+\tfrac{1}{4}\omega_{uv}$. We can thus recover all effective resistances in the graph as $\omega_{uv}=4\tau_{uv}(c)-2$. From this data, the graph and edge weights can be found by constructing the Laplacian matrix via $K=\Omega^{-1}$ as in \eqref{eq: L and K}.
\end{proof}

We thus have a third parametrization in addition to $P(G)^{\circ}$ and $\mathcal{K}(G)$: the set of resistance capacities $(\tau_U(c))_{U\subseteq V}$ for all normalized edge weights $c$ on a given graph $G$. We denote this set by $\mathcal{S}(G)\subseteq [0,1]^{2^V}$. We now relate certain inequalities on this space to resistance nonnegativity. A set function $f:2^V\rightarrow \R$ is called \emph{submodular} if it satisfies
\begin{equation}\label{eq: definition submodularity}
f(A) + f(B) \geq f(A\cup B) + f(A\cap B) \quad\quad\quad\text{~for all $A,B\subseteq V$.}
\end{equation}
The set of all submodular set functions on a given ground set $V$ is called the submodular cone; it is a polyhedral cone in $\R^{2^V}$ cut out by the submodularity inequalities \eqref{eq: definition submodularity}. We rephrase \cite[Thm. 6.33]{devriendt_2022_thesis}, which implies submodularity for the resistance curvature when $p(c)\geq 0$:
\begin{theorem}
Let $G$ and $c$ be normalized with $p(c)\geq 0$. Then $\tau_{\bullet}(c)$ is submodular.
\end{theorem}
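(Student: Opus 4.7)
Set $\sigma_U := \mathbf{1}_U^T \Omega[U]^{-1}\mathbf{1}_U$ for $|U| \geq 2$, so that $\tau_U = \tfrac{1}{2} + \tfrac{1}{2\sigma_U}$, while $\tau_\emptyset = 0$ and $\tau_v = \tfrac{1}{2}$. Since the additive $\tfrac{1}{2}$ is modular, submodularity of $\tau$ reduces, modulo a few boundary checks for $|A\cap B|\leq 1$, to verifying the marginal-diminishing property
\[
\tau(A \cup \{v\}) - \tau(A) \;\geq\; \tau(B \cup \{v\}) - \tau(B) \quad\text{for all $A \subseteq B \subseteq V \setminus \{v\}$.}
\]
The plan is to interpret $\sigma_U$ electrically via Kron reduction and then invoke a classical submodularity result for capacities in electrical networks.

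A key starting point is that $\Omega[U]$ is itself a resistance matrix: it equals $\Omega_{\tilde G_U}$ where $\tilde G_U = \kron_{V\setminus U}(G)$ is the matched-weights Kron reduction of $V\setminus U$. By Lemma~\ref{lem: resistance curvature and kron}, the hypothesis $p(c)\geq 0$ is preserved on $\tilde G_U$, and a direct computation shows that the total curvature $\sum_v p_v = 1$ is also preserved under matched-weights Kron reduction. Consequently $\sigma_U = \mathbf{1}^T K_{\tilde G_U}\mathbf{1}$ acquires a uniform meaning across $U$: it is the common edge-weight rescaling required to renormalize the matched weights on $\tilde G_U$.

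Next I would adjoin a ground vertex $*$ to $G$, joining it to each $v \in V$ with conductance $p_v(c)\geq 0$; the hypothesis makes this augmented graph $\hat G$ nonnegatively weighted, and the normalization $\sum_v p_v = 1$ gives $*$ total outgoing conductance $1$. A Schur-complement computation based on the grounded Laplacian $\hat L[V] = L + \diag(p)$ together with Fiedler's identity \eqref{eq: L and K} should exhibit $1/\sigma_U$ as an effective electrical quantity in the augmented Kron-reduced network $\widehat{\tilde G_U}$ between $U$ and $*$. Granted this identification, submodularity of $\tau$ follows from the classical fact in potential theory that $U \mapsto \mathcal{C}_{\hat G}(U,*)$ is a $2$-alternating Choquet capacity, hence submodular; the marginal-diminishing inequality above is precisely the standard form of this $2$-alternating property.

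The main obstacle I anticipate is the precise electrical identification in the previous paragraph: testing small cases (the path $P_3$, the triangle $C_3$, the edge $K_2$) rules out the naive guess $\sigma_U = \mathcal{C}_{\hat G}(U,*)$, so the correct formulation must involve the matched-weights Kron reduction with a nonstandard normalization convention. Should a clean probabilistic interpretation not materialize, I would fall back on a direct algebraic route: rewrite the submodularity defect $\tau(A)+\tau(B)-\tau(A\cup B)-\tau(A\cap B)$ as a rational function in the entries of $K \in \mathcal{K}(G)$, clear denominators, and verify the resulting polynomial inequality on the semialgebraic set cut out by $K\mathbf{1} \geq 0$ using iterated block Schur-complement identities, reducing by single-vertex Kron reductions to a small base case where submodularity can be checked directly.
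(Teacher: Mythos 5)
Your proposal has a genuine gap, and you acknowledge it yourself: the central step is to identify $1/\sigma_U$ with an effective conductance $\mathcal{C}_{\hat G}(U,*)$ in the grounded network so that submodularity of $\tau$ falls out of the $2$-alternating (Choquet capacity) property, but you note that small examples rule out the naive identification and you do not supply the correct one. Without that identification the electrical route does not close, and the fallback — clearing denominators on $\mathcal{K}(G)$ and reducing by single-vertex Schur complements — is a plan rather than an argument; in particular it is unclear how iterated Kron reduction, which shrinks $V$, is compatible with checking a set-function inequality indexed by subsets of $V$.

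The paper takes a much more economical route that makes no use of a ground vertex or a Choquet-capacity theorem. It defines $\sigma^2_U = \tau_U - \tfrac12\delta_{U\neq\emptyset}$ (the unshifted quantity, essentially $\tfrac12\sigma_U^{-1}$ in your notation), and cites \cite[Thm.~6.33]{devriendt_2022_thesis}, where the hypothesis $p(c)\geq 0$ is used to prove the submodularity inequality for $\sigma^2$ whenever $A\cap B\neq\emptyset$. The only failing case for $\sigma^2$ is distinct singletons, $\sigma^2_a+\sigma^2_b = 0 < \sigma^2_{ab}$; the new observation in the theorem is that the $+\tfrac12$ shift plus the normalization $\tau_V=1$ repairs exactly this case, since $\tau_a+\tau_b = 1 = \tau_V \geq \tau_{ab}$ by monotonicity, while the shift is modular so it cannot break the already-established intersecting cases. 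The remaining cases are absorbed into the induction of \cite[Thm.~6.33]{devriendt_2022_thesis}. Several of your preliminary observations are sound and consistent with the paper — in particular that $\Omega[U]$ is the resistance matrix of $\kron_{V\setminus U}(G)$ and that $\sum_v p_v = 1$ is preserved under matched-weights Kron reduction — but they do not by themselves produce the inequality, and the load-bearing step remains missing.
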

\begin{proof}
Let $\sigma^2_{U} = \tau_{U} - \tfrac{1}{2}\delta_{U\neq\emptyset}$ be the resistance capacity shifted by $-\tfrac{1}{2}$ for all sets except for the empty set $U=\emptyset$. It was shown in \cite[Thm. 6.33]{devriendt_2022_thesis} that $\sigma^2$ satisfies the submodularity inequality \eqref{eq: definition submodularity} whenever the two sets $A,B$ intersect. The main obstruction in proving full submodularity of $\sigma^2$ is that inequality \eqref{eq: definition submodularity} fails for distinct singletons $A=\{a\},B=\{b\}$. Indeed, we have
$$
\sigma^2_a(c)+\sigma^2_b(c) = 0 < \sigma^2_{ab}(c).
$$
This obstruction is no longer present for $\tau$ due to the $-\tfrac{1}{2}$ shift and normalization; we find
$$
\tau_a(c) + \tau_b(c) = 1 = \tau_E(c) \geq \tau_{ab}(c)\quad\quad\text{~for distinct $a,b\in V$}.
$$
All other cases are then dealt with inductively in the proof of \cite[Thm. 6.33]{devriendt_2022_thesis}.
\end{proof}
\begin{conjecture}\label{conj: submodularity implies nonnegativity}
Let $G$ and $c$ be normalized with $\tau_\bullet(c)$ submodular. Then $p(c)\geq 0$.
\end{conjecture}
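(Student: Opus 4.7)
The plan is to translate submodularity of $\tau_\bullet(c)$ into algebraic conditions on the inverse resistance matrix $K=\Omega^{-1}$ and then extract nonnegativity of $p_v=(K\mathbf{1})_v$ from those conditions. The starting point is the Schur-complement identity
\[
\mathbf{1}^T\,\Omega[U]^{-1}\,\mathbf{1} \;=\; -\frac{\det(\bar\Omega[U])}{\det(\Omega[U])},
\]
where $\bar\Omega[U]$ is the Cayley--Menger bordering of $\Omega[U]$ by a row and column of ones. Substituting this into $\tau_U=\tfrac{1}{2}+\tfrac{1}{2}(\mathbf{1}^T\Omega[U]^{-1}\mathbf{1})^{-1}$ recasts each submodularity inequality as a polynomial inequality in the entries of $\Omega$, placing the entire conjecture in a semialgebraic setting over $\mathcal{K}(G)$.

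The second step is to derive a key identity linking $p_v$ to $\tau_{V\setminus v}$. Block-inverting $\Omega$ along the partition $\{v\}\cup(V\setminus v)$ yields
\[
\mathbf{1}_{V\setminus v}^T\,\Omega[V\setminus v]^{-1}\,\mathbf{1}_{V\setminus v} \;+\; S_v\,p_v^{2} \;=\; 1,\qquad S_v \;=\; -\,\omega_v^T\,\Omega[V\setminus v]^{-1}\,\omega_v \;=\; 1/K_{vv},
\]
where $\omega_v$ is the column of $\Omega$ at $v$ restricted to $V\setminus v$. Combined with the submodularity bound $\tau_{V\setminus v}\ge\tfrac{1}{2}$ (which is precisely the inequality for $A=\{v\}$, $B=V\setminus v$), this forces $S_v\le 0$ and determines $|p_v|$ in closed form, namely $p_v^{2}=(2-2\tau_{V\setminus v})/\bigl((2\tau_{V\setminus v}-1)\,|S_v|\bigr)$.

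To pin down the sign of $p_v$, I would invoke the diminishing-returns form of submodularity for the singleton $v$: the marginal $\Delta_v(S)=\tau_{S\cup v}-\tau_S$ is nonincreasing in $S\subseteq V\setminus v$. Chaining block-inverse identities through a nested chain $\emptyset=S_0\subset S_1\subset\cdots\subset S_{n-1}=V\setminus v$ represents each $\Delta_v(S_k)$ as a Schur-complement step through $\Omega$, and monotonicity of $\Delta_v$ along the chain should translate into the sign condition that selects $+|p_v|$ over $-|p_v|$. An alternative and possibly cleaner route is the contrapositive: assume $p_v<0$ at some vertex and engineer an explicit pair $A,B$ at which submodularity of $\tau$ must fail, perhaps by dualizing the simplex-geometric argument of Fiedler \cite{fiedler_2011_matrices} used in Theorem~\ref{thm: toughness}.

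The main obstacle is exactly this sign-extraction step. Submodularity expressed through bordered determinants is inherently a quadratic condition in $p$ and does not a priori distinguish $+p_v$ from $-p_v$; the semialgebraic approach alone controls $|p_v|^{2}$ but not the sign of $p_v$ itself. Overcoming this will probably require either a sum-of-squares certificate combining several submodularity inequalities simultaneously, or a careful inductive reversal of the proof of \cite[Thm.~6.33]{devriendt_2022_thesis} that was used to establish the forward implication in the preceding theorem. If no such reversal exists, the conjecture would be refuted by exhibiting an extreme ray of the submodular cone (intersected with the image of $\mathcal{K}(G)$ under $c\mapsto\tau_\bullet(c)$) at which some $p_v$ is negative, which suggests that experimenting with small graphs of $|V|\le 6$ should be the first concrete test of the conjecture.
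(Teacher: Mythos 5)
The statement you were given is Conjecture~\ref{conj: submodularity implies nonnegativity}; the paper proves only the converse implication (that $p(c)\geq 0$ forces $\tau_\bullet(c)$ to be submodular) and explicitly leaves this direction open, citing only the partial result ``submodular $\sigma^2$ implies $p(c)\geq -1$'' from \cite[Prop.~6.34]{devriendt_2022_thesis}. There is therefore no paper proof to compare your attempt against, and anything short of a full resolution of the conjecture is, in that sense, incomplete by definition.

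That said, your sketch is a sensible reconnaissance of the problem and is consistent with the paper's algebraic framework in Section~\ref{sec: algebraic characterization}. The Cayley--Menger identity $\mathbf{1}^T\Omega[U]^{-1}\mathbf{1}=-\det(\bar\Omega[U])/\det(\Omega[U])$ is correct, and the block-inversion identity $\mathbf{1}_{V\setminus v}^T\Omega[V\setminus v]^{-1}\mathbf{1}_{V\setminus v}+S_vp_v^2=1$ with $S_v=1/K_{vv}$ checks out once one expands $\mathbf{1}^TK\mathbf{1}=1$ blockwise. One bookkeeping slip: the conclusion $S_v\leq 0$ does not follow from the submodularity inequality $\tau_{V\setminus v}\geq\tfrac12$ alone (that only gives $\mathbf{1}'^T\Omega[V\setminus v]^{-1}\mathbf{1}'>0$); it follows from the monotonicity bound $\tau_{V\setminus v}\leq\tau_V=1$, which forces $\mathbf{1}'^T\Omega[V\setminus v]^{-1}\mathbf{1}'\geq 1$. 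Monotonicity is available unconditionally by the paper's Proposition in Section~\ref{sec: nonnegativity and submodularity}, so this is repairable, but the attribution should be corrected.

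The gap you yourself flag --- extracting the sign of $p_v$ rather than only $|p_v|$ --- is the genuine and hard part of the problem, and your proposal does not close it. Every submodularity and monotonicity inequality you write down is, after clearing denominators, a polynomial inequality in the entries of $\Omega$ in which $p_v$ enters only through even powers, so no finite conjunction of them alone can certify $p_v\geq 0$ without a further structural input (a sum-of-squares identity coupling several inequalities, an inductive Schur-chain argument that realizes the sign, or a contradiction derived from the geometry of $\mathcal{K}(G)$). Your suggestion to test small graphs on extreme rays of the submodular cone intersected with $\mathcal{S}(G)$ is reasonable as a diagnostic, but it is a research plan rather than a proof step. As it stands, the proposal reduces the conjecture to exactly the obstacle it identifies, and does not resolve it.
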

The author proved the partial result ``submodular $\sigma^2$ implies $p(c)\geq-1$" in \cite[Prop. 6.34]{devriendt_2022_thesis}. Conjecture \ref{conj: submodularity implies nonnegativity} is equivalent to the following conjecture on resistance nonnegative graphs:
\begin{conjecture}\label{conj: RN from submodularity}
A graph $G$ is RN if and only if $\mathcal{S}(G)$ intersects the submodular cone in $[0,1]^{2^V}$.
\end{conjecture}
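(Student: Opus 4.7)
The plan is first to reduce Conjecture \ref{conj: RN from submodularity} to Conjecture \ref{conj: submodularity implies nonnegativity}. The forward direction of Conjecture \ref{conj: RN from submodularity} follows from the preceding theorem together with the scale-invariance of $p(c)$: if $G$ is RN with witness weights $c$, the common rescaling $c_e \mapsto c_e/(\sum_{u,v} K_{uv})$ normalizes the weights without changing $p$ (each $r_{uv} = \omega_{uv}c_{uv}$ is scale-invariant), so $\tau_\bullet(c)$ is submodular and lies in $\mathcal{S}(G)$. The backward direction is exactly the content of Conjecture \ref{conj: submodularity implies nonnegativity}, so it is enough to prove the latter.

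For Conjecture \ref{conj: submodularity implies nonnegativity}, I would begin from the known partial result $p(c) \geq -1$, which uses only the submodularity inequalities of $\sigma^2$ on intersecting sets, and ask which additional inequalities are needed to push the bound up to $0$. A useful starting computation is the block-matrix identity
$$\mathbf{1}^T \Omega[V\setminus v]^{-1} \mathbf{1} \;=\; 1 \,-\, \frac{p_v^2}{K_{vv}},$$
obtained by writing $\Omega[V\setminus v]^{-1} = K[V\setminus v] - K[V\setminus v, v]\,K[v,v]^{-1}\,K[v, V\setminus v]$ and summing all entries. Combined with the single submodularity relation $\tau_v + \tau_{V\setminus v} \geq \tau_V + \tau_\emptyset$, this immediately yields $|p_v| \leq \sqrt{K_{vv}}$. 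This is not yet $p_v \geq 0$, so the next step is to derive analogous quadratic identities for $\tau_{V\setminus S}$ with $S \ni v$ via the same block-inversion trick, and then to combine them with the disjoint-set submodularity inequalities $\tau_A + \tau_B \geq \tau_{A\cup B}$—precisely the ones left unused in the $p \geq -1$ proof—hoping to extract a linear inequality that pins down the sign of $p_v$. A complementary strategy is induction on $|V|$ via Kron reduction: the proposition in Section \ref{sec: graph operations} shows Kron reduction preserves RN, and one should be able to lift this to the $\tau$-side by relating $\tau_\bullet$ of a Kron reduction to that of the original graph through the Schur-complement structure hidden in $\Omega[U]^{-1}$.

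The main obstacle is the mismatch between the essentially quadratic, sign-blind nature of the submodularity constraints on $\tau$ and the linear sign condition $p_v \geq 0$ one wants to prove; the bound $|p_v| \leq \sqrt{K_{vv}}$ gives no direct reason for $p_v$ to be nonnegative, so a proof must identify a family of submodularity inequalities whose \emph{joint} solution set lies in the positive orthant of $p$-space. A more geometric attempt is to work on the space $\mathcal{K}(G)$ of inverse resistance matrices (Corollary \ref{cor: RN from inverse resistance matrices}), where both submodularity of $\tau_\bullet$ and $p \geq 0$ translate to explicit polynomial inequalities on $K$, and to show that the submodular region cut out inside $\mathcal{K}(G)$ is contained in the halfspaces $\widetilde{\mathcal{H}}_v$. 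Understanding the boundary locus where $p_v = 0$, or equivalently the extreme rays of the cone of submodular $\tau$'s coming from a given $G$, is where the difficulty lies; the sharpness of the partial result $p \geq -1$ strongly suggests that some new ingredient beyond elementary manipulations of submodularity—perhaps a combinatorial identity linking $\Omega[U]^{-1}$ across a cross-intersecting family of subsets $U \subseteq V$—will ultimately be required.
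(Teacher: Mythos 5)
This statement is an open conjecture: the paper offers no proof, only a reduction. Immediately after the theorem on submodularity of $\tau_\bullet$ and the statement of Conjecture~\ref{conj: submodularity implies nonnegativity}, the paper asserts that Conjecture~\ref{conj: RN from submodularity} is equivalent to Conjecture~\ref{conj: submodularity implies nonnegativity}. Your opening paragraph reproduces exactly this equivalence and is correct: the forward direction follows from the preceding theorem together with the scale-invariance of $p$ (each $r_{uv}=\omega_{uv}c_{uv}$ is invariant under common rescaling of $c$), and the backward direction is precisely Conjecture~\ref{conj: submodularity implies nonnegativity}. So your reduction matches the paper's intent, and the heart of the problem is the still-open Conjecture~\ref{conj: submodularity implies nonnegativity}, which you honestly leave unresolved.

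There is, however, a sign error worth flagging in your warm-up computation. Your block-inversion identity $\mathbf{1}^T\Omega[V\setminus v]^{-1}\mathbf{1} = 1 - p_v^2/K_{vv}$ is correct, but the conclusion $|p_v|\leq\sqrt{K_{vv}}$ is not: from $L = -2K + 2(K\mathbf{1})(K\mathbf{1})^T$ one gets $K_{vv} = -\tfrac{1}{2}\sum_{u}c_{uv} + p_v^2$, which is typically \emph{negative}; in fact $K=\Omega^{-1}$ has signature $(1,n-1)$. Consequently $1-p_v^2/K_{vv} = 1 + p_v^2/|K_{vv}|\geq 1$ automatically, so the single submodularity relation $\tau_v + \tau_{V\setminus v}\geq\tau_V$ carries no information here, and the square-root bound is vacuous (indeed, not even real-valued). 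This does not undermine the overall plan, but it does mean the particular ``first constraint'' you identify gives nothing; the difficulty you describe at the end (that the submodular cone must somehow be shown to sit inside the nonnegative orthant of $p$-space) is genuine and is exactly why the paper leaves this as a conjecture.
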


\begin{example} 
We consider the diamond graph $G_{\hdiamond}$ with normalized constant weights $c\approx 0.531$. Figure \ref{fig: tau function} shows the resistance capacity $\tau_U(c)$ of the $6$ distinct subsets $U\subseteq V(G_{\hdiamond})$ of size $\vert U\vert\geq 2$, up to automorphisms. The resistance capacity is both monotonous and submodular.

\begin{figure}[h!]
    \centering
    \includegraphics[width=0.7\linewidth]{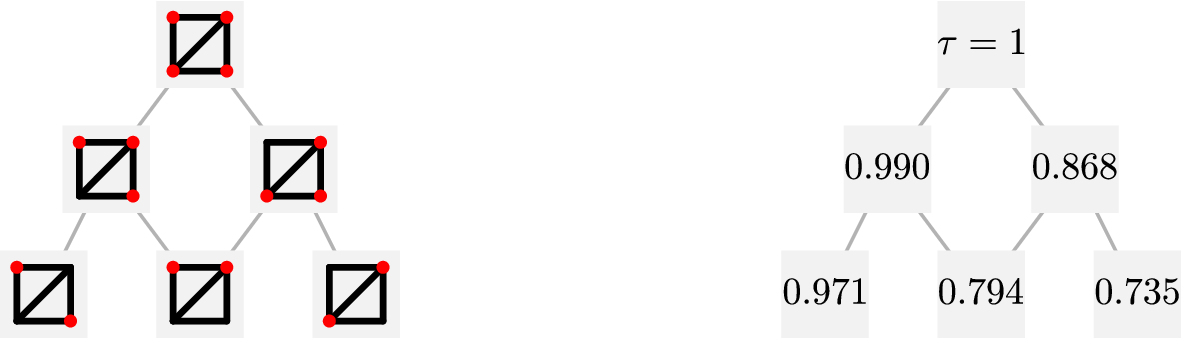}
    \caption{The resistance capacity of subsets of the vertices of $G_{\hdiamond}$ (rounded to 3-digit precision).}
    \label{fig: tau function}
\end{figure}
\end{example}
\section*{Acknowledgements} We thank Harry Richman for pointing out mistakes in an earlier version of the manuscript, Gwena\"{e}l Joret and Jean Cardinal for pointing to extended formulations of spanning tree polytopes (Remark \ref{rem: computation}) and an anonymous referee for suggestions that improved the presentation.


\bibliographystyle{abbrv}
\bibliography{bibliography.bib}

\begin{thebibliography}{10}

\bibitem{aprile_2021_smaller}
M.~Aprile, S.~Fiorini, T.~Huynh, G.~Joret, and D.~R. Wood.
\newblock Smaller extended formulations for spanning tree polytopes in minor-closed classes and beyond.
\newblock {\em Electronic Journal of Combinatorics}, 28(4):P4.47, 2021.

\bibitem{bauer_2006_toughness}
D.~Bauer, H.~J. Broersma, and E.~Schmeichel.
\newblock Toughness in graphs --- {A} survey.
\newblock {\em Graphs and Combinatorics}, 22(10):1--35, 2006.

\bibitem{biggs_1993_algebraic}
N.~Biggs.
\newblock {\em Algebraic Graph Theory}.
\newblock Cambridge University Press, Cambridge, second edition, 1993.

\bibitem{chvatal_1973_tough}
V.~Chv{\'a}tal.
\newblock Tough graphs and {H}amiltonian circuits.
\newblock {\em Discrete Mathematics}, 5(3):215--228, 1973.

\bibitem{cinkir_2007_thesis}
Z.~Cinkir.
\newblock {\em The tau constant of metrized graphs}.
\newblock PhD thesis, University of Georgia, 2007.

\bibitem{dawkins_2024_resistance}
A.~Dawkins, V.~Gupta, M.~Kempton, W.~Linz, J.~Quail, H.~Richman, and Z.~Stier.
\newblock Node resistance curvature in {C}artesian graph products.
\newblock 2024.
\newblock arXiv:2403.01037 [math.CO].

\bibitem{devriendt_2022_thesis}
K.~Devriendt.
\newblock {\em Graph geometry from effective resistances}.
\newblock PhD thesis, University of Oxford, 2022.

\bibitem{devriendt_2024_lives}
K.~Devriendt, H.~Friedman, B.~Reinke, and B.~Sturmfels.
\newblock The two lives of the {G}rassmannian.
\newblock {\em to appear in Acta Universitatis Sapientiae, Mathematica}, 2024.

\bibitem{devriendt_2022_curvature}
K.~Devriendt and R.~Lambiotte.
\newblock Discrete curvature on graphs from the effective resistance.
\newblock {\em Journal of Physics: Complexity}, 3(2):025008, 2022.

\bibitem{devriendt_2024_graph}
K.~Devriendt, A.~Ottolini, and S.~Steinerberger.
\newblock Graph curvature via resistance distance.
\newblock {\em Discrete Applied Mathematics}, 348:68--78, 2024.

\bibitem{dorfler_2013_kron}
F.~{D{\"o}rfler} and F.~{Bullo}.
\newblock Kron reduction of graphs with applications to electrical networks.
\newblock {\em IEEE Transactions on Circuits and Systems I: Regular Papers}, 60(1):150--163, 2013.

\bibitem{edmonds_1965_matching}
J.~Edmonds.
\newblock Maximum matching and a polyhedron with 0,1-vertices.
\newblock {\em Journal of Research of the National Bureau of Standards Section B Mathematics and Mathematical Physics}, 69B(1 and 2):125, 1965.

\bibitem{feichtner_2005_matroid}
E.~M. Feichtner and B.~Sturmfels.
\newblock Matroid polytopes, nested sets and {B}ergman fans.
\newblock {\em Portugaliae Mathematica. Nova Série}, 62(4):437--468, 2005.

\bibitem{fiedler_2011_matrices}
M.~Fiedler.
\newblock {\em Matrices and Graphs in Geometry}, volume 139 of {\em Encyclopedia of Mathematics and its Applications}.
\newblock Cambridge University Press, Cambridge, UK, 2011.

\bibitem{foster_1949_average}
R.~M. Foster.
\newblock The average impedance of an electrical network.
\newblock {\em Contributions to Applied Mechanics (Reissner Anniversary Volume)}, pages 333--340, 1949.

\bibitem{garg_2020_operator}
A.~Garg, L.~Gurvits, R.~Oliveira, and A.~Wigderson.
\newblock Operator scaling: {T}heory and applications.
\newblock {\em Foundations of computational mathematics}, 20(2):223--290, 2020.

\bibitem{gawrilow_2000_polymake}
E.~Gawrilow and M.~Joswig.
\newblock {\em {P}olymake: {A} Framework for Analyzing Convex Polytopes}, pages 43--73.
\newblock Birkh{\"a}user Basel, Basel, 2000.

\bibitem{gelfand_1987_combinatorial}
I.~Gelfand, R.~Goresky, R.~MacPherson, and V.~Serganova.
\newblock Combinatorial geometries, convex polyhedra, and {S}chubert cells.
\newblock {\em Advances in Mathematics}, 63(3):301–316, 1987.

\bibitem{kenyon_2009_lectures}
R.~Kenyon.
\newblock Lectures on dimers.
\newblock In {\em Statistical mechanics}, volume~16 of {\em IAS/Park City Math. Ser.}, pages 191--230. American Mathematical Society, Providence, Rhode Island, 2009.

\bibitem{kirwan_1984_convexity}
F.~Kirwan.
\newblock Convexity properties of the moment mapping, {III}.
\newblock {\em Inventiones Mathematicae}, 77(3):547–552, 1984.

\bibitem{michalek_2021_invitation}
M.~Micha{\l}ek and B.~Sturmfels.
\newblock {\em Invitation to Nonlinear Algebra}.
\newblock Number 211 in Graduate studies in mathematics. American Mathematical Society, Providence, Rhode Island, 2021.

\bibitem{rosenberg_2006_manifolds}
J.~Rosenberg.
\newblock Manifolds of positive scalar curvature: {A} progress report.
\newblock {\em Surveys in differential geometry}, 11(1):259--294, 2006.

\bibitem{schoen_1987_structure}
R.~Schoen and S.-T. Yau.
\newblock The structure of manifolds with positive scalar curvature.
\newblock In M.~G. Crandall, P.~H. Rabinowitz, and R.~E. Turner, editors, {\em Directions in Partial Differential Equations}, pages 235--242. Academic Press, 1987.

\bibitem{spielman_2011_sparsification}
D.~A. Spielman and N.~Srivastava.
\newblock Graph sparsification by effective resistances.
\newblock {\em SIAM Journal on Computing}, 40(6):1913--1926, 2011.

\bibitem{sullivant_2024_algebraic}
S.~Sullivant.
\newblock {\em Algebraic Statistics}, volume 194 of {\em Graduate Studies in Mathematics}.
\newblock American Mathematical Society, Providence, Rhode Island, 2024.

\end{thebibliography}

\bigskip
\bigskip
		
\noindent {\bf Authors' address:}
	
\smallskip
		
\noindent Karel Devriendt, University of Oxford
\hfill \url{karel.devriendt@maths.ox.ac.uk}

\end{document}